\def\wh{\widehat}
\def\wt{\widetilde} 
\def\R{\mathbb R}
\def\Z{\mathbb Z}
\def\N{\mathbb N}
\def\A{\mathbb A}
\def\x{\mathbf x}
\def\y{\mathbf y}
\def\n{\mathbf n}
\def\m{\mathbf m}
\def\k{\mathbf k}
\def\w{\mathbf w}
\def\z{\mathbf z}
\def\D{\mathbf D}
\def\1{\mathbf 1}
\def\H{\mathfrak H}
\def\bxi{\boldsymbol \xi}
\def\1{\bold 1}
\def\eps{\varepsilon}
\def\Dom{\mathrm{Dom}\,}
\def\Ker{\mathrm{Ker}\,}
\def\le{\leqslant}
\def\ge{\geqslant}
\definecolor{darkred}{rgb}{0.9,0.1,0.1}
\theoremstyle{theorem}
\newtheorem{theorem}{Theorem}[section]
\newtheorem{proposition}[theorem]{Proposition}
\newtheorem{lemma}[theorem]{Lemma}
\newtheorem{remark}[theorem]{Remark}
\numberwithin{equation}{section}
\theoremstyle{plain}
\newtoks\thehProclaim
\newtheorem*{Proclaim}{\the\thehProclaim}
\begin{document}

\bigskip

\centerline{\textbf{Homogenization of L\'evy-type operators:}}
\centerline{\textbf{operator estimates with correctors}}

\bigskip
\def\supind#1{${}^\mathrm{#1}$}
\centerline{ A.~Piatnitski\supind{1,2},
V.~Sloushch\supind{3}, T.~~Suslina\supind{3}, E.~Zhizhina\supind{1,2}}

\medskip
\centerline{\supind{1} The Arctic University of Norway, UiT,  campus Narvik,}
\centerline{Lodve Langes gate 2, Narvik 8517, Norway}

\medskip
\centerline{\supind{2}Higher School of Modern Mathematics MIPT,}
\centerline{Klimentovski per., 1, bld. 1, Moscow, 115184, Russia}

\medskip
\centerline{\supind{3}St. Petersburg State University,}
\centerline{Universitetskaia nab.,  7/9, St. Petersbirg, 199034, Russia}

\medskip
\centerline{e-mail: elena.jijina@gmail.com}
\centerline{e-mail: apiatnitski@gmail.com}
\centerline{e-mail: v.slouzh@spbu.ru}
\centerline{e-mail: t.suslina@spbu.ru}

\medskip
\centerline{Abstract}

The goal of the paper is to study in $L_2(\R^d)$ a self-adjoint operator ${\mathbb A}_\eps$, $\eps >0$, of the form
$$
({\mathbb A}_\eps u) (\x) =  \int_{\R^d} \mu(\x/\eps, \y/\eps) \frac{\left( u(\x) - u(\y) \right)}{|\x - \y|^{d+\alpha}}\,d\y
$$
with $1< \alpha < 2$;
 here the function
 $\mu(\x,\y)$ is $\Z^d$-periodic in the both variables,  satisfies the symmetry relation $\mu(\x,\y) = \mu(\y,\x)$ and
 the estimates $0< \mu_- \leqslant \mu(\x,\y) \leqslant \mu_+< \infty$.
The rigorous definition of the operator  ${\mathbb A}_\eps$ is given in terms of the corresponding quadratic form.
In the previous work of the authors it was shown that the resolvent $({\mathbb A}_\eps + I)^{-1}$ converges, as $\eps\to0$, in the operator norm in $L_2(\mathbb R^d)$ to the resolvent
of the effective operator $A^0$, and the estimate   $\|({\mathbb A}_\eps + I)^{-1} - (\A^0 + I)^{-1} \| = O(\eps^{2-\alpha})$
holds.  In the present work we achieve a more accurate approximation of the resolvent of ${\mathbb A}_\eps$
which takes into account the correctors. Namely,  for $N\in\mathbb N$ such that  $2-1/N < \alpha \le 2-1/(N+1)$,
we obtain
$$
\bigl\|({\mathbb A}_\eps + I)^{-1} - (\A^0 + I)^{-1} - \sum_{m=1}^N \eps^{m(2-\alpha)} \mathbb{K}_m \bigr\| = O(\eps).
$$

\medskip
\noindent\textbf{Keywords}:
L\'evy type operators,
periodic homogenization, operator estimates of discrepancy, effective operator, correctors.

\medskip

\noindent
The work of  A. Piatnitski and E. Zhizhina was partially supported by the UiT Aurora project MASCOT.
The work of V. Sloushch  and T. Suslina was supported by RSF, project \hbox{22-11-00092-P}.

\section*{Introduction}

The work is devoted to obtaining operator estimates in homogenization problem for a nonlocal
L\'evy-type operator with a periodic coefficient.  Our goal is to achieve improved operator estimates
for the rate of convergence using the corrector technique.


\subsection{Problem setup. Main results}\label{Sec0.1}

We study an unbounded L\'evy-type operator $\A_\eps = \A_\eps(\alpha,\mu)$ in $L_2(\R^d)$ which is formally defined by
 \begin{equation}
 \label{Aeps_Intro}
 (\mathbb{A}_\eps u)( \x)= \int_{\mathbb R^d}  \mu ( \x/\eps,
  \y/\eps)\frac{( u(\x)-u(\y))}{|\x - \y|^{d+\alpha}}\,d\y,\ \ \x\in\mathbb{R}^{d};
 \end{equation}
here $0< \alpha <2$,  $\mu(\x,\y)$ is
bounded  positive definite function which is
$\mathbb{Z}^{d}$-periodic in each variable and satisfies the relation  $\mu(\x,\y)=\mu(\y,\x)$.
In the rigorous way the operator $\A_\eps$ is defined as a self-adjoint operator generated by the closed quadratic form
\begin{equation}
\label{q_form_Intro}
a_\eps [u,u] := \frac{1}{2} \int_{\R^d} \int_{\R^d} d\x\,d\y\, \mu(\x/\eps,\y/\eps) \frac{|u(\x)-u(\y)|^2}{|\x - \y|^{d+\alpha}},\ \ u\in H^{\alpha/2}(\R^d).
\end{equation}

Operator $-\A_\eps$  is the generator of a jump Markov process in $\mathbb R^d$, a detailed description
of such processes and their properties  can be found in the work  \cite{BBZK}.
The integral kernel of $\A_\eps$ shows a power law decay at infinity, moreover, it has an infinite second moment.
A characteristic property of the corresponding Markov processes is the presence of long range interactions, these processes can make long distance jumps
(L\'evy flights). Therefore, the trajectories of such processes differ significantly from continuous trajectories of diffusion
processes.
 At present Levy-type processes are widely used in modeling the behaviour of complex systems in which long distance interaction plays an essential, sometimes even key, role.
In particular, many models in population biology and ecology, astrophysics, financial mathematics and mechanics of porous media are based on these processes, see, for example, \cite{CT, EP, HS, NS, UZ, W}. When  studying such models in environments with variable characteristics, we come to Markov processes with a generator of the form $-\A_\eps$.

Periodic homogenization problem for the operator  $\A_\eps$ was considered in the paper \cite{KaPiaZhi19} where it
was shown that the resolvent $(\A_\eps + I)^{-1}$ converges, as $\eps\to0$, strongly in $L_2(\mathbb R^d)$ to the
resolvent $(\A^0 + I)^{-1}$ of the effective operator.
The effective operator $\A^0$ has the same structure as $\mathbb{A}_\eps$, but with constant coefficient
$$
\mu^0 = \int_\Omega \int_{\Omega} \mu(\x,\y)\,d\x\,d\y, \quad \Omega := [0,1)^d.
$$
This operator coincides, up to a constant factor, with the fractional power of Laplacian:
$\A^0 = \mu^0 c_0(d,\alpha) (-\Delta)^{\alpha/2}$, $\Dom \A^0 = H^{\alpha}(\R^d)$.

In a different framework homogenization results for non-symmetric
L\'evy-type operators with periodic coefficients that correspond  to stable-like
  Markov processes were obtained in the work  \cite{CCKZ21_2}; for $\alpha>1$ the convergence result holds in moving
  coordinates.
  The paper  \cite{CCKZ21}  focuses on estimating the convergence rate for symmetric L\'evy-type operators in the strong topology.
  The approach used in  \cite{CCKZ21} relies on probabilistic arguments.

In our previous work \cite{JPSS24} it was shown that the resolvent  $(\A_\eps + I)^{-1}$
converges to the resolvent of the effective operator in the operator norm in $L_2(\mathbb R^d)$. Moreover,
the rate of convergence can be estimates as follows:
\begin{equation}\label{e3.1_Intro}
\|(\A_{\varepsilon}+I)^{-1}-(\A^{0}+I)^{-1}\|_{L_2(\R^d) \to L_2(\R^d)}\le
{\mathrm C}_1(\alpha,\mu) \begin{cases} \eps^\alpha, & 0 < \alpha <1,\\
\eps (1 + | \operatorname{ln} \eps|)^2, & \alpha =1,
\\ \eps^{2 - \alpha}, & 1< \alpha < 2.
\end{cases}
\end{equation}

As noted in \cite{JPSS24}, the estimate $O(\eps^\alpha)$ is optimal, at least in the framework of the approach used
in this work.
Thus, for $0< \alpha <1$ just
the leading term of the expansion already provides an optimal approximation to the resolvent
$(\A_{\varepsilon}+I)^{-1}$. For $1 \le \alpha <2$ it is not the case. Moreover, the estimate $O(\eps^{2-\alpha})$
is getting worse, as $\alpha$ is approaching 2.

In the present work we consider the case $1< \alpha < 2$ and show  that the precision of the approximation
can be improved by taking into account appropriate correctors.
The main result of the work, Theorem \ref{teor3.2}, states that for any $N \in \N$ and any
$\alpha\in\big(2 - \frac{1}{N}, 2\big)$ the following estimate holds:
\begin{equation}\label{e0.1}
\begin{aligned}
\bigl\| (\A_{\varepsilon}+I)^{-1}-(\A^{0}+I)^{-1} - \sum_{m=1}^N  \eps^{m(2-\alpha)} \mathbb{K}_m \bigr\|_{L_2(\R^d) \to L_2(\R^d)}
\\
\le
{\mathrm C}_2(\alpha,\mu) \begin{cases}
  \eps, & 2 - \frac{1}{N} < \alpha \le 2 - \frac{1}{N+1},
  \\
 \eps^{(N+1)(2-\alpha)}, & 2 - \frac{1}{N+1} < \alpha < 2.
  \end{cases}
\end{aligned}
 \end{equation}
 The correctors ${\mathbb K}_m,$ $m=1,\dots,N,$ are given by the relations
 $$
 {\mathbb K}_m :=  ( \operatorname{div} g^0 \nabla)^m (\A^{0}+I)^{-m-1},\quad m=1,\dots,N,
 $$
where (not necessary sign-definite) matrix $g^0$ is defined in terms of solutions to auxiliary problems.
For a given $\alpha$, $1 < \alpha <2$, one can choose  $N \in \N$ in such a way that
$2 - \frac{1}{N} < \alpha \le 2 - \frac{1}{N+1}$.  Then the approximation of the resolvent
$(\A_{\varepsilon}+I)^{-1}$ that takes  into account the first $N$ correctors yields a precision of order $O(\eps)$.

\subsection{Spectral method. Operator estimates }\label{Sec0.1}

Currently, the homogenization theory of periodic operators is a well developed field of mathematics
which comprises a number of different approaches and techniques, see for instance  \cite{BaPa}, \cite{BeLP}, \cite{JKO}
for further details.
One of the important methods in this field, the so-called spectral method, is based on the scaling transformation
and Floquet-Bloch theory. The first rigorous homogenization result obtained by this method was published in   \cite{Sev},
where it was shown that the resolvent of a uniformly elliptic operator
${\mathcal A}_\eps = - \operatorname{div} g(\x/\eps) \nabla$ with periodic coefficients converges strongly in
$L_2 (\mathbb{R}^d)$  to the resolvent of the effective operator. The latter has the form
${\mathcal A}^0 = - \operatorname{div} g^0 \nabla$ with a constant positive definite matrix $g^0$ which is called
the effective matrix.
Later on this approach was further developed in \cite{Zh1}, \cite{AlCo98}, \cite{AlCoVa98}, \cite{COrVa} and other papers. These papers dealt with various homogenization problems for differential operators with a periodic microstructure,
among them are boundary value problems for elliptic operators and related spectral problems, operators in perforated domains and fluid mechanics problems.
However, it should be noted  that all the mentioned works focused on proving the strong resolvent convergence.

In the works \cite{BSu1, BSu3, BSu4} M. Birman and T. Suslina introduced and developed a new approach to problems of
homogenization of periodic differential operators in $\mathbb R^d$, the so-called operator-theoretic approach, which
is a version of the spectral method.
This approach allows one to obtain order-sharp estimates for the rate of resolvent convergence in operator norms
for a wide class of homogenization problems in periodic media.
We illustrate this approach by considering in $L_2 ({\mathbb R}^d)$ a scalar divergence form elliptic operator
${\mathcal A}_\eps = - \operatorname{div} g(\x/\eps) \nabla$ with $\mathbb Z^ d$-periodic coefficients.
According to the classical homogenization theory, for such an operator the strong resolvent convergence takes place,
as $\eps\to0$.
In \cite{BSu1} it was shown that a more advanced convergence result holds. Namely, the resolvent $({\mathcal A}_\eps +I) ^{-1}$ converges to the resolvent of the effective operator
${\mathcal A}^0$
in the operator norm in  $L_2(\R^ d)$.  Furthermore, the following estimate holds
\begin{equation}
\label{BSu1}
\| ( {\mathcal A}_\eps +I)^{-1} - ( {\mathcal A}^0 +I)^{-1}\|_{L_2(\R^d) \to L_2(\R^d)} \leqslant C \eps.
\end{equation}
In the homogenization theory estimates of this type are called {\sl operator estimates for the rate of convergence}.
In  \cite{BSu3} a more precise approximation of the resolvent $( {\mathcal A}_\eps + I)^{-1}$, including additional terms with correctors, was obtained. This approximation provides the precision of order $O (\eps^2)$ in the operator norm in
$L_2(\mathbb R ^ d)$.
The rate of convergence of the resolvent $( {\mathcal A}_\eps + I)^{-1}$ in the norm of operators acting from
$L_2(\R^d)$ to the Sobolev space  $H^1 (\R^d)$ was investigated in  \cite{BSu4}.

The operator-theoretic approach is based on scaling transformation, Floquet-Bloch theory and analytic perturbation theory.
Let us clarify this method using the derivation of estimate \eqref{BSu1} as an example.
Making scaling transformation we reduce estimate \eqref{BSu1} to the inequality
\begin{equation}
\label{BSu2}
\| ( {\mathcal A} + \eps^2 I)^{-1} - ( {\mathcal A}^0 + \eps^2 I)^{-1}\|_{L_2(\R^d) \to L_2(\R^d)} \leqslant C \eps^{-1},
\end{equation}
where ${\mathcal A} = - \operatorname{div} g(\x) \nabla = \D^* g(\x) \D$, $\D = -i \nabla$.
With the help of the unitary Gelfand transform the operator ${\mathcal A}$ is decomposed into a direct integral over
the operators ${\mathcal A}(\bxi)$, acting in $L_2(\Omega)$ and depending on a parameter, the so-called quasi-momentum, $\bxi \in \widetilde{\Omega}$.
Here  $\Omega = [0,1)^d$ is a cell of the lattice
$\Z^d$, and $\widetilde{\Omega} = [-\pi,\pi)^d$ is a cell of the dual lattice. Operator
${\mathcal A}(\bxi)$ is given by the formula ${\mathcal A}(\bxi) = (\D + \bxi)^* g(\x) (\D+ \bxi)$, it acts
in the space of periodic functions.
Then estimate \eqref{BSu2} is equivalent to the estimates
\begin{equation*}
\| ( {\mathcal A}(\bxi) + \eps^2 I)^{-1} - ( {\mathcal A}^0(\bxi) + \eps^2 I)^{-1}\|_{L_2(\Omega) \to L_2(\Omega)} \leqslant C \eps^{-1},
\quad \bxi \in \widetilde{\Omega},
\end{equation*}
for the operators ${\mathcal A}(\bxi)$ depending on a quasi-momentum.
The main part of the study is  investigating the operator family  ${\mathcal A}(\bxi)$ which is analytic and consists
of operators with compact resolvent.
Thus the methods of the analytic perturbation theory can be applied. It turns out that  the resolvent
\hbox{$({\mathcal A}(\bxi) + \eps^2 I )^{-1}$} can be approximated in terms of the spectral characteristics
of the operator at the spectral edge.
Thus, the effect of homogenization is a {\sl spectral threshold effect} at the spectral edge of elliptic operator.

A different approach to obtaining operator estimates
 of the approximation discrepancy in homogenization problems, the so-called ``shift method'', was proposed in the works of Zhikov and Pastukhova, see \cite{Zh, ZhPas1}, as well as the review \cite{ZhPas3} and the literature cited there.  In the recent years operator estimates for the rate of convergence in  homogenization problems for various differential operators attract the attention of a growing number of researches.
A number of significant results has been obtained in this topic. A detailed survey of the state of art in this field can be found in \cite[Introduction]{Su_UMN2023}.

\subsection
{Operator estimates in homogenization problems for nonlocal convolution-type operators}

The study of operator estimates in  homogenization problems for periodic convolution-type operators
was initiated  in the recent works by the authors  \cite{PSlSuZh, PSlSuZh2},
 where an operator $A_\eps$ of the form
 \begin{equation}
 \label{Sus1}
 ({A}_\eps u)( \x)=\frac1{\eps^{d+2}}\int_{\mathbb R^d} a((\x-\y)/\eps) \mu ( \x/\eps,
  \y/\eps)\big( u(\x)-u(\y)\big)\,d\y,\ \ \x\in\mathbb{R}^{d},\ \ u\in L_{2}(\mathbb{R}^{d}),
 \end{equation}
 was considered in $L_2(\R^d)$.
 It was assumed that $a(\x)$ is an even non-negative function from  $L_{1}(\mathbb{R}^{d})$,
 $\mu(\x,\y)$ is a bounded positive definite $\mathbb{Z}^{d}$-periodic in the variables $\x$ and $\y$ function
 such that $\mu(\x,\y)=\mu(\y,\x)$. Under these conditions the operator  ${A}_{\eps}$ is bounded, self-adjoint
 and non-negative.
It was also assumed that $a(\cdot)$ has finite moments
$M_{k}(a) =\int_{\mathbb{R}^{d}}|\x|^{k}a(\x)\,d\x$ up to order 3 or 4.

Convolution-type operators with integrable kernels appear in various models of mathematical biology and population dynamics, in the recent years these models were intensively studied  in the mathematical literature, see
\cite{KPMZh, PZh, PiaZhi19}.
The work \cite{PZh} focused on periodic homogenization of such operators, it was proved in this work that in the case
$M_2(a) < \infty$ the resolvent $({A}_{\varepsilon}+I)^{-1}$ converges strongly in $L_2(\mathbb R^d)$ to the resolvent
\hbox{$({A}^{0}+I)^{-1}$} of the effective operator. The effective operator takes the form
${A}^{0}=-\operatorname{div}g^{0}\nabla$ with a positive definite constant matrix   $g^0$.
 It is interesting to observe that the effective operator is local and unbounded while the original operators  ${A}_{\eps}$
 are nonlocal and bounded.

Similar problems in perforated domains were investigated by variational methods in  \cite{BraPia21}.
The case of  non-symmetric convolution type kernels was addressed in  \cite{PiaZhi19}, where it was proved that
for the corresponding parabolic semigroups the homogenization result holds in moving coordinates.

In the papers \cite{PSlSuZh, PSlSuZh2} the operator-theoretic approach originally developed for differential operators
was modified and successfully adapted to the case of convolution-type operators with integrable kernels.
As in the case of differential equations, homogenization problem for convolution-type operators is reduced to studying
the family of operators $A(\bxi)$, $\bxi\in\widetilde\Omega$, obtained by applying the scaling transformation and the Gelfand transform to the original
operator. However, in contrast with differential operators,  this family is not analytic in $\bxi$ and thus the analytic
perturbation theory does not apply.  Instead, the authors used a different approach  that relies on finite regularity of $A(\bxi)$.  This regularity is ensured by
the condition of finiteness of several moments of  $a(\x)$.
Under the assumption $M_3(a) < \infty$ the order sharp estimate for the rate of convergence in the operator norm was deduced in \cite{PSlSuZh}. This estimate reads
\begin{equation*}\label{Sloushch2}
\|({A}_{\varepsilon}+I)^{-1}-({A}^{0}+I)^{-1}\|_{L_{2}(\mathbb{R}^{d})\to
L_{2}(\mathbb{R}^{d})}\leqslant C(a,\mu)\varepsilon,\ \ \varepsilon>0.
\end{equation*}
In the case \hbox{$M_4(a) < \infty$} a more accurate approximation of the resolvent $({A}_{\varepsilon}+I)^{-1}$
was constructed in \cite{PSlSuZh2} and \cite{PSSZ_Ur}. By taking into account the correctors, this approximation provides a precision of order
$O(\eps^2)$.

\subsection{Method}
In order to obtain quantitative homogenization results for the operator $\A_\eps$
we modify the operator-theoretic approach and adapt it to the setting of L\'evy-type operators.

At the first step, making the scaling transformation, we derive the relation
\begin{equation}\label{e3.2_Intro}
\|(\A_{\varepsilon}+I)^{-1}-(\A^{0}+I)^{-1}\|_{L_2(\R^d) \to L_2(\R^d)} = \eps^\alpha
\|(\A + \eps^\alpha I)^{-1}-(\A^{0}+ \eps^\alpha I)^{-1}\|_{L_2(\R^d) \to L_2(\R^d)}.
\end{equation}
Here $\A = \A_{\eps_0},\ \eps_0 =1$.
Then, by means of the Gelfand transform, the operator  $\A$ is decomposed into a direct integral over the operators
 $\A(\bxi)$,   $\bxi \in \wt{\Omega}$, that act in the space $L_2(\Omega)$. For each  $\bxi\in \wt{\Omega}$  the spectrum
 of  $\A(\bxi)$ is discrete and belongs to $\mathbb R_ +$. Moreover, the first eigenvalue is of order $O(|\bxi|^\alpha)$,
 while the remaining eigenvalues are separated from zero.

Thus studying the limit behaviour of the resolvent $(\mathbb{A}_\eps+ I) ^{-1}$ as $\eps\to0$ is reduced to
obtaining  the asymptotics of the resolvent \hbox{$(\mathbb{A}(\bxi)+\eps^{\alpha}I)^{-1}$} for small $\eps$.
It is clear that the main contribution to the asymptotics of interest comes from the bottom of the spectrum of   $\A(\bxi)$.
It should be emphasized that, in contrast with the case of elliptic differential operators, the family  $\A(\bxi)$ is not
analytic and has low regularity.

It should  also be noted that the studied L\'evy-type operators differ significantly from the convolution-type operators of the form \eqref{Sus1}, for which the finite differentiability of the family  $\A(\bxi)$ is ensured by the finiteness of the corresponding number of  moments of $a(\x)$.
Nevertheless, we succeeded to obtain ''threshold approximations'' required for constructing an approximation
of the resolvent    $(\mathbb{A}(\bxi)+\eps^{\alpha}I)^{-1}$ for small $\eps$.
To this end we characterized  the behaviour of the operators  $F(\bxi)$ and $\mathbb{A}(\bxi)F(\bxi)$ as $\bxi\to0$;
here $F(\bxi)$ is the spectral projection of the operator $\mathbb{A}(\bxi)$ that corresponds to some neighbourhood  of zero.
In the existing literature the asymptotics of the operator $\mathbb{A}(\bxi)F(\bxi)$ for small  $\bxi$ is usually determined
in terms of the  behaviour of the principal eigenvalue of the operator   $\mathbb{A}(\bxi)$ in the vicinity of zero.
In the present work we use an alternative approach that relies on integrating  the resolvent   $(\A(\bxi) - \zeta I)^{-1}$
over a proper contour on the complex plane.

Since $\| (\A(\bxi) + \eps^\alpha I)^{-1} F(\bxi)^\perp\| \le C$, the best accuracy that we can obtain when approximating the resolvent $(\A_{\varepsilon}+I)^{-1}$ by the described above method  is of order $O(\eps^\alpha)$;  see \eqref{e3.2_Intro}.
Therefore,  in the case $0< \alpha < 1$,  the leading term of the approximation already provides the best precision,
see \eqref{e3.1_Intro}. For \hbox{$1 \le \alpha <2$} this is not the case. In the present work we consider the case
 $1< \alpha < 2$, our goal is to improve the precision of the approximation by taking into account the correctors,
 see \eqref{e0.1}.

\subsection{Plan of the paper}
The paper consists of Introduction and five sections.
In Section 1 we introduce operator ${\mathbb A}$, represent it as a direct integral over the family of operators  ${\mathbb A}(\bxi)$ and obtain a lower bound for the quadratic form of ${\mathbb A}(\bxi)$.
In Section 2 we derive a representation for the difference of the quadratic forms $a(\bxi)$ and $a(\mathbf{0})$
which is used in the further analysis.
In Section 3 the threshold characteristics of the operator family  ${\mathbb A}(\bxi)$ are studied in the vicinity of the lower edge
of the spectrum, here approximations for the spectral projector $F(\bxi)$ and for the operator $\A(\bxi) F(\bxi)$ are constructed for small  $|\bxi|$.
In  Section 4 we first construct an approximation of the resolvent $( {\mathbb A}(\bxi) + \eps^\alpha I)^{-1}$
for small $\eps$, and then, using the decomposition of  $\A$ into a direct integral, obtain an approximation of the resolvent
$( {\mathbb A} + \eps^\alpha I)^{-1}$.
Finally, in Section 5, combining the results of Section 4 and the scaling transformation, we
approximate the resolvent $({\mathbb A}_\eps + I)^{-1}$ in the operator norm in  $L_2(\R^d)$, this is the main result of the work.

\subsection{Notation}
A norm in a linear normed space $X$ is denoted by $\|\cdot\|_{X}$, or without lower index if it does nor lead to ambiguity.
If $X$ and $Y$ are normed spaces, the standard norm of a linear operator  $T:X\to Y$
is denoted by $\|T\|_{X\to
Y}$, or just  $\|T\|$.  The notation $\mathcal{L}\{F\}$ stands for the linear span of a collection of vectors  $F\subset X$.

Let $\H$, $\H_*$ be  separable complex Hilbert spaces. For a linear operator $A: \H \to \H_*$ we denote by $\operatorname{Dom} A$ and $\operatorname{Ker} A$ the domain and the kernel of $A$, respectively.
For a domain $\mathcal O\subset\R^d$ the notation $L_{p}({\mathcal O})$,
\hbox{$1 \le p \le \infty$}, is used for the standard  $L_p$ spaces.
If $p=2$, the corresponding inner product in $L_{2}({\mathcal O})$ is denoted by
 $(\cdot,\cdot)_{L_{2}({\mathcal O})}$ or just  $(\cdot,\cdot)$.
The notation $H^s({\mathcal O})$ stands for the standard Sobolev class of order $s>0$
in a domain $\mathcal O$.

We also use the following notation: $\mathbf{x} = (x_1,\dots, x_d)^{t} \in \R^d$, $i D_j = \partial_j = \partial / \partial x_j$, $j=1,\dots,d$; $\mathbf{D} = - i \nabla = (D_1,\dots,D_d)^t$.
For the Schwartz class in  $\R^{d}$ the standard notation $\mathcal{S}(\R^{d})$ is used,  the characteristic function
of a set  ${\mathcal O}\subset\R^d$ is denoted  $\mathbf{1}_{\mathcal O}$.

Finally, $B_r(\x_0)$ denotes an open ball in $\R^d$ of radius $r$ centered at  $\x_0$, and
 $\omega_{d}$ is the surface area of the unit sphere ${\mathbb S}^{d-1}$ in $\R^d$.

\section{L\'evy-type operators with periodic coefficients: \\ decomposition in direct integral and estimates}

\subsection{Operator $\A(\alpha,\mu)$}
Let   $\mu\in L_{\infty}(\R^d\times \R^d)$ be such that
\begin{gather}
\label{e1.1}
0<\mu_{-}\le\mu(\x,\y)\le\mu_{+}< \infty,\ \ \mu(\x,\y)=\mu(\y,\x),\ \ \x, \y\in\R^d;
\\
\label{e1.2}
\mu(\x+\m,\y+\n)=\mu(\x,\y),\ \ \x, \y\in\R^d,\ \ \m,\n\in\Z^d.
\end{gather}
We assume that $1 < \alpha < 2$,  set $\gamma := \frac{\alpha}{2}$, and
consider in the space $L_{2}(\R^d)$ the quadratic form
\begin{equation}
\label{e1.3}
a(\alpha,\mu) [u,u] := \frac{1}{2} \int_{\R^d} \int_{\R^d} d\x\,d\y\, \mu(\x,\y) \frac{|u(\x)-u(\y)|^2}{|\x - \y|^{d+\alpha}},\ \ u\in H^\gamma(\R^d).
\end{equation}
In view of  \eqref{e1.1} and \eqref{e1.3} the form $a(\alpha,\mu)$ is densely defined in $L_2(\R^d)$, non-negative
and satisfies the estimates
 \begin{equation}
\label{e1.4}
\mu_- a_0(\alpha) [u,u] \le a(\alpha,\mu) [u,u] \le \mu_+ a_0(\alpha) [u,u],\ \ u\in H^\gamma(\R^d),
\end{equation}
with
\begin{equation}
\label{e1.5}
a_0(\alpha) [u,u] := \frac{1}{2} \int_{\R^d} \int_{\R^d} d\x\,d\y\, \frac{|u(\x)-u(\y)|^2}{|\x - \y|^{d+\alpha}},\ \ u\in H^\gamma(\R^d).
\end{equation}
The following statement is well-known, see, for instance, \cite[\S~6.31]{Sa}.

\begin{lemma}
\label{lem1.1}
The form in  \eqref{e1.5} admits the representation
\begin{equation}
\label{e1.6}
a_0(\alpha) [u,u] = c_0(d,\alpha)  \int_{\R^d} d\k\, |\k|^\alpha | \wh{u}(\k)|^2,\ \ u\in H^\gamma(\R^d),
\end{equation}
where $\wh{u}(\k)$ is the Fourier image of a function $u(\x)$, and the constant $c_0 = c_0(d,\alpha)$
is defined by
\begin{equation}
\label{e1.6a}
 c_0 = c_0(d,\alpha) =  \int_{\R^d} \frac{1 - \cos z_1}{ | \z|^{d+\alpha}} \,d\z = \frac{ \pi^{d/2} |\Gamma(-\alpha/2)|}{2^\alpha \Gamma((d+\alpha)/2)}.
\end{equation}
\end{lemma}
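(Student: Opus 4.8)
\textbf{Proof plan for Lemma \ref{lem1.1}.}

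The plan is to compute the quadratic form $a_0(\alpha)[u,u]$ on the Fourier side and extract the constant $c_0(d,\alpha)$ as an explicit integral, which is then evaluated via a standard Gamma-function identity. First I would take $u\in\mathcal S(\R^d)$ (density in $H^\gamma(\R^d)$, together with the two-sided bound \eqref{e1.4} and the evident continuity of the right-hand side of \eqref{e1.6} in the $H^\gamma$-norm, then gives the general case by passing to the limit). Writing $u(\x)-u(\y) = u(\x)-u(\x-\z)$ with $\z = \y-\x$ and changing variables $\y\mapsto \z$ in the inner integral, I get
\begin{equation*}
a_0(\alpha)[u,u] = \frac12\int_{\R^d} d\x \int_{\R^d} \frac{|u(\x)-u(\x-\z)|^2}{|\z|^{d+\alpha}}\,d\z.
\end{equation*}
By Fubini (justified by the $H^\gamma$-bound, i.e. finiteness of \eqref{e1.6}, which one checks separately) I interchange the order of integration, and for fixed $\z$ apply the Plancherel identity in $\x$: the Fourier image of $\x\mapsto u(\x)-u(\x-\z)$ is $(1-e^{-i\langle\k,\z\rangle})\wh u(\k)$, so
\begin{equation*}
\int_{\R^d}|u(\x)-u(\x-\z)|^2\,d\x = \int_{\R^d} |1-e^{-i\langle\k,\z\rangle}|^2\,|\wh u(\k)|^2\,d\k = \int_{\R^d} 2\bigl(1-\cos\langle\k,\z\rangle\bigr)\,|\wh u(\k)|^2\,d\k.
\end{equation*}

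Substituting back and interchanging integrals once more yields
\begin{equation*}
a_0(\alpha)[u,u] = \int_{\R^d} |\wh u(\k)|^2 \Bigl(\int_{\R^d}\frac{1-\cos\langle\k,\z\rangle}{|\z|^{d+\alpha}}\,d\z\Bigr)\,d\k,
\end{equation*}
so it remains to show that the inner integral equals $c_0(d,\alpha)|\k|^\alpha$ with $c_0$ as in \eqref{e1.6a}. For this I would use the rotational invariance and scaling of the measure $|\z|^{-d-\alpha}\,d\z$: for $\k\ne0$ write $\k = |\k|\,\mathbf e$ with $\mathbf e$ a unit vector, rotate so that $\mathbf e$ becomes the first coordinate vector, and then rescale $\z\mapsto |\k|^{-1}\z$, which contributes a factor $|\k|^{-d}\cdot|\k|^{d+\alpha} = |\k|^\alpha$. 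This reduces the inner integral to $|\k|^\alpha\int_{\R^d}\frac{1-\cos z_1}{|\z|^{d+\alpha}}\,d\z$, which is exactly the definition of $c_0$ in the first equality of \eqref{e1.6a}; one should note in passing that this integral converges because near $\z=0$ one has $1-\cos z_1 = O(|\z|^2)$ and $2 < d+\alpha$ is not needed — rather $d+\alpha-2 < d$ makes it integrable at the origin, while at infinity $d+\alpha > d$ handles decay.

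The only remaining point is the closed-form evaluation $\int_{\R^d}\frac{1-\cos z_1}{|\z|^{d+\alpha}}\,d\z = \pi^{d/2}|\Gamma(-\alpha/2)|/(2^\alpha\Gamma((d+\alpha)/2))$, which is the classical normalization constant of the fractional Laplacian; I would simply cite \cite[\S\,6.31]{Sa} (or carry it out by integrating out the $d-1$ variables orthogonal to $z_1$ to get a one-dimensional integral $\mathrm{const}\cdot\int_\R (1-\cos t)|t|^{-1-\alpha}\,dt$ and invoking the standard value of the latter via the Gamma function). The main obstacle — really the only non-bookkeeping step — is this last explicit constant; everything else is Plancherel plus a scaling argument, and the density/limiting argument at the start is routine given \eqref{e1.4}.
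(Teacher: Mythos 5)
Your argument is correct and is the standard Plancherel-plus-scaling proof of this identity; the paper itself gives no proof at all, merely citing \cite[\S~6.31]{Sa}, so your write-up is exactly the expected argument behind that citation. One small simplification: since the integrand is nonnegative, Tonelli applies directly and the Fourier computation is valid for every $u\in L_2(\R^d)$ with both sides possibly infinite, so neither the density/limiting step nor the appeal to \eqref{e1.4} is actually needed; also your parenthetical about convergence near the origin is garbled in wording (the relevant point is simply that $1-\cos z_1=O(|\z|^2)$ and $\alpha<2$ give integrability at $\z=\mathbf{0}$, while $\alpha>0$ gives integrability at infinity), though the mathematics you intend is right.
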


It should be noted that $c_0(d,\alpha) = O((2-\alpha)^{-1})$, as $\alpha \to 2$.

By Lemma  \ref{lem1.1} the form $a_0(\alpha)$  is closed, and, due to the estimates in \eqref{e1.4}, the form  $a(\alpha,\mu)$ is also closed.

By definition, ${\mathbb A} = {\mathbb A}(\alpha,\mu)$ \emph{is the self-adjoint operator in $L_2(\R^d)$ generated by the closed form} in  \eqref{e1.3}.  Formally, one can write, see \cite{KaPiaZhi19},
\begin{equation*}
({\mathbb A} u) (\x) =  \int_{\R^d} \mu(\x, \y) \frac{\left( u(\x) - u(\y) \right)}{|\x - \y|^{d+\alpha}}\,d\y.
\end{equation*}
Let ${\mathbb A}_0 = {\mathbb A}_0(\alpha)$ be the self-adjoint operator in $L_2(\R^d)$  generated by the closed form
in  \eqref{e1.5}. Due to representation \eqref{e1.6}, operator  ${\mathbb A}_0(\alpha)$ coincides, up to a multiplicative constant,  with the fractional power of the Laplacian:
\begin{equation*}
{\mathbb A}_0(\alpha)  = c_0(d,\alpha) ( - \Delta)^{\gamma},  \quad \Dom{\mathbb A}_0(\alpha) = H^{\alpha}(\R^d).
\end{equation*}

From representation  \eqref{e1.6} it follows that the point $\lambda_{0}=0$ is the lower edge of the spectrum of the operator ${\mathbb A}_0(\alpha)$, and, due to estimates  \eqref{e1.4}, this point is also the lower edge of the spectrum
of $\A(\alpha,\mu)$.

\subsection{The family of operators $\A(\bxi;\alpha,\mu)$}\label{sec1.2}
Denote by $\Omega:=[0,1)^{d}$ the periodicity cell of the lattice $\Z^d$, and let  $\widetilde\Omega:=[-\pi,\pi)^{d}$ be
the cell of the dual lattice $(2\pi\mathbb{Z})^{d}$. For $s>0$ we denote by  $\wt{H}^s(\Omega)$ the subspace of $H^s(\Omega)$ that consists of the functions whose $\Z^d$-periodic extension belongs to $H^s_{\operatorname{loc}}(\R^d)$.

In the space of  $\Z^d$-periodic functions the standard discrete Fourier transform ${\mathcal F}: L_{2}(\Omega)\to \ell_{2}(\Z^d)$
is defined by the formula
\begin{gather*}
{\mathcal F} u(\n)= \wh{u}_\n = \int_{\Omega}u(\x)e^{-2\pi i \langle \n, \x \rangle}d\x,\ \ \n\in\Z^d,\ \ u\in L_{2}(\Omega);
\\
u(\x) = \sum_{\n\in\Z^d} \wh{u}_{\n}e^{2\pi i \langle \n, \x\rangle},\ \ \x\in\Omega.
\end{gather*}

Then the relation   $u \in \wt{H}^s(\Omega)$ is equivalent to the convergence of the series
$$
\sum_{\n \in \Z^d} (1 + |2\pi \n|^2)^s | \wh{u}_\n|^2.
$$
Moreover, this sum admits two-sided estimates by $\| {u} \|^2_{{H}^s(\Omega)}$.

In the space $L_2(\Omega)$ we consider a family of  quadratic forms $a(\bxi)= a(\bxi;\alpha,\mu)$ depending on a parameter $\bxi \in \widetilde{\Omega}$ and defined by
 \begin{equation}
\label{e1.9}
a(\bxi;\alpha,\mu) [u,u] := \frac{1}{2} \int_{\R^d} d\y \int_{\Omega} d\x\, \mu(\x,\y)
\frac{| e^{i \langle \bxi,\x \rangle}u(\x)- e^{i \langle \bxi,\y \rangle} u(\y)|^2}{|\x - \y|^{d+\alpha}},\ \
u\in \wt{H}^\gamma(\Omega);
\end{equation}
it is assumed here that the function $u\in \wt{H}^\gamma(\Omega)$ is extended to $\R^d$ as  $\Z^d$-periodic function.
According to \eqref{e1.1}, the form $a(\bxi;\alpha,\mu)$ is densely defined in  $L_2(\Omega)$, non-negative, and satisfies
the estimates
 \begin{equation}
\label{e1.10}
\mu_- a_0(\bxi;\alpha) [u,u] \le a(\bxi;\alpha,\mu) [u,u] \le \mu_+ a_0(\bxi;\alpha) [u,u],\ \ u\in \wt{H}^\gamma(\Omega),
\end{equation}
with
\begin{equation}
\label{e1.11}
a_0(\bxi;\alpha) [u,u] := \frac{1}{2} \int_{\R^d} d\y \int_{\Omega} d\x\,
\frac{| e^{i \langle \bxi,\x \rangle}u(\x)- e^{i \langle \bxi,\y \rangle} u(\y)|^2}{|\x - \y|^{d+\alpha}},\ \
u\in \wt{H}^\gamma(\Omega).
\end{equation}

A proof of the following statement can be found in  \cite[Lemma 1.2]{JPSS24}.
\begin{lemma}[\cite{JPSS24}]
\label{lem1.2}
The form in  \eqref{e1.11} admits the representation
\begin{equation}
\label{e1.12}
a_0(\bxi;\alpha) [u,u] =  c_0(d,\alpha) \sum_{\n \in \Z^d} | 2\pi \n + \bxi|^{\alpha} | \wh{u}_\n|^2,\ \
u\in \wt{H}^\gamma(\Omega).
\end{equation}
Here $\wh{u}_\n$, $\n \in \Z^d$, are the Fourier coefficients of the function $u$, and $c_0(d,\alpha)$
is a constant given by \eqref{e1.6a}.
\end{lemma}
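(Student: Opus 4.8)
\medskip
\noindent\textbf{Proof strategy.}
The plan is to reduce the evaluation of $a_0(\bxi;\alpha)[u,u]$ to the scalar singular integral already computed in Lemma~\ref{lem1.1}, by a change of variables together with Parseval's identity. Since the integrand in \eqref{e1.11} is non-negative, every interchange of sums and integrals below is legitimate by Tonelli's theorem, and the claimed identity will automatically hold in $[0,+\infty]$ (both sides being finite for $u\in\wt H^\gamma(\Omega)$ in view of \eqref{e1.10}).

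First I would introduce $f(\x):=e^{i\langle\bxi,\x\rangle}u(\x)$, with $u$ extended $\Z^d$-periodically, so that $f(\x)=\sum_{\n\in\Z^d}\wh u_\n\,e^{i\langle 2\pi\n+\bxi,\x\rangle}$. Using Tonelli to integrate in $\x$ first and then, for each fixed $\x$, substituting $\z:=\y-\x$ in the $\y$-integral gives
\begin{equation*}
a_0(\bxi;\alpha)[u,u]=\frac12\int_{\R^d}\frac{d\z}{|\z|^{d+\alpha}}\int_{\Omega}\bigl|f(\x)-f(\x+\z)\bigr|^2\,d\x .
\end{equation*}
The point I would emphasize is that $\{e^{i\langle 2\pi\n+\bxi,\x\rangle}\}_{\n\in\Z^d}$ is \emph{still} an orthonormal system in $L_2(\Omega)$, because the Bloch phase cancels: $e^{i\langle 2\pi\n+\bxi,\x\rangle}\,\overline{e^{i\langle 2\pi\n'+\bxi,\x\rangle}}=e^{2\pi i\langle\n-\n',\x\rangle}$. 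Since $f(\x)-f(\x+\z)=\sum_{\n}\wh u_\n\,(1-e^{i\langle 2\pi\n+\bxi,\z\rangle})\,e^{i\langle 2\pi\n+\bxi,\x\rangle}$, Parseval's identity then yields
\begin{equation*}
\int_{\Omega}\bigl|f(\x)-f(\x+\z)\bigr|^2\,d\x=\sum_{\n\in\Z^d}|\wh u_\n|^2\,\bigl|1-e^{i\langle 2\pi\n+\bxi,\z\rangle}\bigr|^2 .
\end{equation*}
Substituting this back and swapping the $\n$-sum with the $\z$-integral (Tonelli again) reduces the whole computation to evaluating, for $\k:=2\pi\n+\bxi$,
\begin{equation*}
I(\k):=\int_{\R^d}\frac{\bigl|1-e^{i\langle\k,\z\rangle}\bigr|^2}{|\z|^{d+\alpha}}\,d\z=\int_{\R^d}\frac{2\,(1-\cos\langle\k,\z\rangle)}{|\z|^{d+\alpha}}\,d\z .
\end{equation*}

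It then remains to show $I(\k)=2\,c_0(d,\alpha)|\k|^\alpha$. Convergence holds for $0<\alpha<2$: near the origin $1-\cos\langle\k,\z\rangle=O(|\z|^2)$, so the integrand is $O(|\z|^{2-d-\alpha})$, integrable since $\alpha<2$; near infinity the integrand is $O(|\z|^{-d-\alpha})$, integrable since $\alpha>0$. For $\k\ne\mathbf 0$, an orthogonal change of variables aligning $\k$ with the first coordinate axis, followed by the rescaling $\z\mapsto\z/|\k|$ (producing $|\k|^{-d}$ from $d\z$ and $|\k|^{d+\alpha}$ from $|\z|^{-d-\alpha}$), gives $I(\k)=2|\k|^\alpha\int_{\R^d}\frac{1-\cos z_1}{|\z|^{d+\alpha}}\,d\z=2\,c_0(d,\alpha)|\k|^\alpha$ by the definition \eqref{e1.6a} of $c_0$; for $\k=\mathbf 0$ the identity is trivially $0=0$. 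Combining the displays,
\begin{equation*}
a_0(\bxi;\alpha)[u,u]=\frac12\sum_{\n\in\Z^d}|\wh u_\n|^2\,I(2\pi\n+\bxi)=c_0(d,\alpha)\sum_{\n\in\Z^d}|2\pi\n+\bxi|^{\alpha}\,|\wh u_\n|^2 ,
\end{equation*}
which is \eqref{e1.12}. The only genuinely delicate points are bookkeeping ones: checking that orthogonality of the exponentials survives the presence of the Bloch phase (so that Parseval applies verbatim) and pinning down the convergence of $I(\k)$ precisely in the range $1<\alpha<2$ — the latter being exactly the computation behind Lemma~\ref{lem1.1}, which one could alternatively invoke directly after rewriting $I(\k)$ through the Fourier transform. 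No analytic perturbation theory enters; the argument is a direct calculation.
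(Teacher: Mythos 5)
Your proof is correct: the Bloch exponentials $e^{i\langle 2\pi\n+\bxi,\x\rangle}$ do form an orthonormal basis of $L_2(\Omega)$ (they are the image of the standard basis under multiplication by the unimodular factor $e^{i\langle\bxi,\x\rangle}$), Tonelli justifies all interchanges since the integrand is non-negative, and the scaling computation $I(\k)=2c_0(d,\alpha)|\k|^\alpha$ is exactly the integral behind \eqref{e1.6a}. Note that the present paper does not prove this lemma at all but cites it from \cite{JPSS24}; your direct Parseval-plus-rescaling argument is the natural proof and, as far as one can tell, essentially the same computation as in the cited source, so there is nothing to add.
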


As a consequence of this Lemma, the form $a_0(\bxi;\alpha)$ and, in view of estimates  \eqref{e1.10}, the form
$a(\bxi;\alpha,\mu)$ are closed.

We then define ${\mathbb A}(\bxi) = {\mathbb A}(\bxi;\alpha,\mu)$ \emph{as a self-adjoint operator in $L_2(\Omega)$
that corresponds to the closed form in} \eqref{e1.9}, and write formally
\begin{equation*}
({\mathbb A}(\bxi) u) (\x) =  \int_{\R^d} \mu(\x, \y) \frac{\left( u(\x) - e^{- i \langle \bxi, \x-\y\rangle}u(\y) \right)}{|\x - \y|^{d+\alpha}}\,d\y.
\end{equation*}
Let ${\mathbb A}_0(\bxi) = {\mathbb A}_0(\bxi;\alpha)$ be a self-adjoint operator in
$L_2(\R^d)$ generated by the closed form in   \eqref{e1.11}.
Then, thanks to representation \eqref{e1.12}, the operator ${\mathbb A}_0(\bxi;\alpha)$  coincides with the fractional
power of the operator $|\D +\bxi|$ up to a multiplicative constant:
\begin{equation}
\label{e1.14}
{\mathbb A}_0(\bxi;\alpha)  = c_0(d,\alpha) | \D + \bxi|^{\alpha},  \quad \Dom{\mathbb A}_0(\bxi;\alpha) =
\wt{H}^{\alpha}(\Omega).
\end{equation}
Due to the compactness of embedding of the space $\wt{H}^{\gamma}(\Omega)$ (being the domain of the form
$a(\bxi;\alpha,\mu)$) into $L_2(\Omega)$,  the spectrum of both operators  $\A(\bxi)$ and  $\A_0(\bxi)$
is discrete for each $\bxi \in \wt{\Omega}$.

\subsection{Decomposition of the operator $\A(\alpha,\mu)$ into a direct integral}
For $\n\in\Z^d$ consider the unitary shift operators  $S_{\n}$ in $L_2(\R^d)$ defined by the relation
\begin{equation*}
S_{\n}u(\x) =u(\x+\n),\ \ \x\in\R^d,\ \ u \in L_2(\R^d).
\end{equation*}
It is straightforward to check that under conditions  (\ref{e1.1}), (\ref{e1.2})  we have
$$
a(\alpha,\mu) [S_{\n} u, S_{\n}u] = a(\alpha,\mu) [ u, u],\quad u \in H^\gamma(\R^d), \quad \n \in \Z^d.
$$
This implies that the operator  $\A(\alpha,\mu)$ commutes with the operators $S_{\n}$ for all $\n \in \Z^d$,
that is  $\A(\alpha,\mu)$ \emph{is a   $\Z^d$-periodic operator}.

Next we recall the definition of the Gelfand transform $\mathcal{G}$, see, for instance,  \cite{Sk} or \cite[Chapter~2]{BSu1}).  For functions from the Schwartz class  ${\mathcal S}(\R^d)$ it is defined by
\begin{equation*}
\mathcal{G}u(\bxi,\x) = \wt{u}(\bxi,\x):=(2\pi)^{-d/2}\sum_{\n\in\Z^d} u(\x+\n) e^{-i \langle \bxi, \x+\n\rangle},\
\ \bxi\in\widetilde\Omega,\ \ \x\in\Omega,\ \ u\in {\mathcal S}(\R^d).
\end{equation*}
Then $\mathcal{G}$  is extended by continuity to the unitary mapping
 $$
 \mathcal{G}: L_{2}(\R^d) \to \int_{\widetilde\Omega}\oplus L_{2}(\Omega)\, d\bxi=L_{2}(\widetilde\Omega\times\Omega).
 $$
Under the Gelfand transform the Sobolev space $H^s(\R^d)$,  $s>0$, turns into a direct integral
of the spaces $\wt{H}^s(\Omega)$:
 $$
 \mathcal{G}: H^s(\R^d) \to \int_{\widetilde\Omega}\oplus \wt{H}^s(\Omega)\, d\bxi
 =L_{2}(\widetilde\Omega; \wt{H}^s(\Omega)).
 $$

Like any periodic operator, $\A(\alpha,\mu)$  is decomposed into a direct integral by means of the Gelfand transform.
This is the subject of the following Lemma proved in   \cite[Lemma 1.3]{JPSS24}:

\begin{lemma}
[\cite{JPSS24}]
\label{lem1.3}
Let conditions  \eqref{e1.1}and  \eqref{e1.2}  be fulfilled, and assume that $1< \alpha< 2$.
Assume, moreover that the form $a = a(\alpha,\mu)$ in $L_2(\R^d)$ is given by \eqref{e1.3},
and the family of forms $a(\bxi) = a(\bxi;\alpha,\mu)$ in $L_2(\Omega)$ is defined in \eqref{e1.9}, here $\bxi \in \wt{\Omega}$.
Then the relation   $u \in H^\gamma(\R^d)$ is equivalent to the relation
$\mathcal{G} u = \wt{u} \in L_2(\wt{\Omega}; \wt{H}^\gamma(\Omega))$, and for almost all
 $\bxi\in \wt{\Omega}$ we have
$\wt{u}(\bxi,\cdot ) \in \wt{H}^\gamma(\Omega)$ and
\begin{equation*}
 a[u,u] = \int_{\wt{\Omega}} a(\bxi) [ \wt{u}(\bxi,\cdot), \wt{u}(\bxi,\cdot)] \, d\bxi,
 \quad u \in H^\gamma(\R^d).
 \end{equation*}
\end{lemma}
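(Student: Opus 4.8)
The plan is to verify the form identity first on the dense set $\mathcal{S}(\R^d)\subset H^\gamma(\R^d)$ and then to pass to the limit; the domain statement itself is an immediate consequence of the mapping property of the Gelfand transform $\mathcal{G}\colon H^s(\R^d)\to L_2(\wt{\Omega};\wt{H}^s(\Omega))$ recalled above, applied with $s=\gamma$. So fix $u\in\mathcal{S}(\R^d)$; by Lemma~\ref{lem1.1} and \eqref{e1.4} one has $a[u,u]<\infty$, and since $u$ decays rapidly, all the lattice sums occurring below converge absolutely.

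First I would use the periodicity of $\mu$ to fold the double integral over $\R^d\times\R^d$ onto $\Omega\times\R^d$. Writing $\x=\x'+\m$ with $\x'\in\Omega$, $\m\in\Z^d$, and then shifting $\y\mapsto\y+\m$, relations \eqref{e1.1}--\eqref{e1.2} give $\mu(\x'+\m,\y+\m)=\mu(\x',\y)$ and $|\x'+\m-\y-\m|=|\x'-\y|$, whence
\begin{equation*}
a[u,u]=\frac12\int_{\Omega}d\x\int_{\R^d}d\y\,\frac{\mu(\x,\y)}{|\x-\y|^{d+\alpha}}\sum_{\m\in\Z^d}\bigl|u(\x+\m)-u(\y+\m)\bigr|^2 .
\end{equation*}
The only point requiring care here is keeping track of $\Z^d$-periodic extensions, which is harmless since $u$ is genuinely defined on all of $\R^d$.

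Next, for fixed $\x\in\Omega$ and $\y\in\R^d$, I would apply the Plancherel identity for the discrete Fourier transform on $\Z^d$ (equivalently, the fibrewise unitarity of $\mathcal{G}$) to the sequence $\m\mapsto u(\x+\m)-u(\y+\m)$. From the definition of $\mathcal{G}$ one computes $\sum_{\m}u(\x+\m)e^{-i\langle\bxi,\m\rangle}=(2\pi)^{d/2}e^{i\langle\bxi,\x\rangle}\wt{u}(\bxi,\x)$, and likewise, writing $\y=\y_0+\k$ with $\y_0\in\Omega$, $\k\in\Z^d$, and using the $\Z^d$-periodic extension of $\wt{u}(\bxi,\cdot)$, $\sum_{\m}u(\y+\m)e^{-i\langle\bxi,\m\rangle}=(2\pi)^{d/2}e^{i\langle\bxi,\y\rangle}\wt{u}(\bxi,\y)$; therefore
\begin{equation*}
\sum_{\m\in\Z^d}\bigl|u(\x+\m)-u(\y+\m)\bigr|^2=\int_{\wt{\Omega}}\bigl|e^{i\langle\bxi,\x\rangle}\wt{u}(\bxi,\x)-e^{i\langle\bxi,\y\rangle}\wt{u}(\bxi,\y)\bigr|^2\,d\bxi .
\end{equation*}
Substituting this into the folded formula and interchanging the order of integration by Tonelli's theorem (legitimate, as the integrand is non-negative) yields exactly $a[u,u]=\int_{\wt{\Omega}}a(\bxi)[\wt{u}(\bxi,\cdot),\wt{u}(\bxi,\cdot)]\,d\bxi$ with $a(\bxi)$ as in \eqref{e1.9}.

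Finally I would remove the restriction $u\in\mathcal{S}(\R^d)$ by density. By Lemma~\ref{lem1.1} and \eqref{e1.4} the form $a$ is bounded on $H^\gamma(\R^d)$, while by Lemma~\ref{lem1.2} and \eqref{e1.10} the forms $a(\bxi)$ are bounded on $\wt{H}^\gamma(\Omega)$ uniformly in $\bxi\in\wt{\Omega}$, since $|2\pi\n+\bxi|^\alpha\le C(1+|2\pi\n|^2)^\gamma$ with $C$ independent of $\bxi$. Hence both sides of the identity depend continuously on $u$ in the $H^\gamma(\R^d)$-norm (using that $\mathcal{G}$ is an isometry onto $L_2(\wt{\Omega};\wt{H}^\gamma(\Omega))$), so the identity extends from $\mathcal{S}(\R^d)$ to all of $H^\gamma(\R^d)$. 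The main obstacle here is not analytic but bookkeeping: matching the normalization constants of $\mathcal{G}$ and of the discrete Fourier transform, and correctly handling the $\Z^d$-periodic extension of $\wt{u}(\bxi,\cdot)$ in its second argument when $\y$ ranges over all of $\R^d$.
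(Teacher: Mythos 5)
Your proof is correct, and it follows the standard route for this kind of statement: the paper itself gives no proof of Lemma \ref{lem1.3} (it is quoted from \cite{JPSS24}), where the argument is essentially the one you give — fold the double integral onto $\Omega\times\R^d$ using \eqref{e1.1}--\eqref{e1.2}, apply Parseval for the discrete Fourier series (i.e.\ the fibrewise unitarity of $\mathcal G$, with the $(2\pi)^{d/2}$ normalization cancelling exactly as you computed), interchange integrals by Tonelli, and close up by density using the boundedness of $a$ on $H^\gamma(\R^d)$ and of $a(\bxi)$ on $\wt H^\gamma(\Omega)$ uniformly in $\bxi$. No gaps.
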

 Since both the operator $\A$ and operators  $\A(\bxi)$ are generated by the corresponding quadratic forms, from Lemma
  \ref{lem1.3} we deduce that
\begin{equation}
\label{e1.15}
\A(\alpha,\mu) = {\mathcal G}^* \Bigl( \int_{\widetilde\Omega} \oplus  \A(\bxi;\alpha,\mu) \,d\bxi\Bigr) {\mathcal G}. \end{equation}

\subsection{Estimates of the quadratic form of operator  $\A(\bxi;\alpha,\mu)$}\label{sec1.4}

Due to Lemma  \ref{lem1.2} the operators $\A_{0}(\bxi; \alpha)$, $\bxi\in\widetilde\Omega$
can be diagonalized by the discrete Fourier transform ${\mathcal F}$ as follows:
\begin{equation}
\label{e1.16}
\A_{0}(\bxi; \alpha)= c_0(d,\alpha) {\mathcal F}^{*} \bigl[   |2\pi \n+\bxi|^\alpha \bigr] {\mathcal F},\ \ \bxi\in\wt{\Omega}.
\end{equation}
Here $[  |2\pi \n+\bxi|^\alpha ]$ denotes the operator of multiplication by the function
$ |2\pi \n+\bxi|^\alpha$, $\n \in \Z^d$, in the space $\ell_2(\Z^d)$.

As an immediate consequence of this diagonalization we have
$\operatorname{Ker}\A_{0}(\mathbf{0};\alpha)=\mathcal{L}\{\mathbf{1}_{\Omega}\}$.
Combining this relation with \eqref{e1.10} yields $\operatorname{Ker}\A(\mathbf{0}; \alpha,\mu)=\mathcal{L}\{\mathbf{1}_{\Omega}\}$, and we arrive at the following statement,
see also \cite[Lemma 1.4]{JPSS24}:
\begin{lemma}[\cite{JPSS24}]
\label{lem1.4}
Let conditions \eqref{e1.1} and \eqref{e1.2} be fulfilled,  and assume that $1< \alpha < 2$.
Then  $\lambda_{0}=0$ is a simple eigenvalue of the operator
$\A(\mathbf{0};\alpha,\mu)$, and
$\operatorname{Ker}\A(\mathbf{0};\alpha,\mu)=\mathcal{L}\{\mathbf{1}_{\Omega}\}$.
\end{lemma}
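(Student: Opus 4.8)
\emph{Proof plan.} The idea is to identify the null space of the operator with the null set of its quadratic form, and then to read off the latter from the two-sided bound \eqref{e1.10} together with the explicit diagonalization \eqref{e1.16} already available at $\bxi=\mathbf{0}$. Since $\A(\mathbf{0};\alpha,\mu)$ is the non-negative self-adjoint operator generated by the closed non-negative form $a(\mathbf{0};\alpha,\mu)$ with form domain $\wt{H}^\gamma(\Omega)$, the first representation theorem for sesquilinear forms gives
$$
\Ker \A(\mathbf{0};\alpha,\mu)=\bigl\{\,u\in\wt{H}^\gamma(\Omega):\ a(\mathbf{0};\alpha,\mu)[u,u]=0\,\bigr\}.
$$
Indeed, if $a(\mathbf{0};\alpha,\mu)[u,u]=0$, the Cauchy--Schwarz inequality for the non-negative form forces $a(\mathbf{0};\alpha,\mu)[u,v]=0$ for every $v\in\wt{H}^\gamma(\Omega)$, whence $u\in\Dom\A(\mathbf{0};\alpha,\mu)$ and $\A(\mathbf{0};\alpha,\mu)u=0$; the reverse inclusion is immediate. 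So it suffices to describe the null set of the form.

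Next I would invoke the estimate \eqref{e1.10} at $\bxi=\mathbf{0}$: as $\mu_->0$, it yields $a(\mathbf{0};\alpha,\mu)[u,u]=0$ if and only if $a_0(\mathbf{0};\alpha)[u,u]=0$. By Lemma \ref{lem1.2} (equivalently, by the diagonalization \eqref{e1.16}),
$$
a_0(\mathbf{0};\alpha)[u,u]=c_0(d,\alpha)\sum_{\n\in\Z^d}|2\pi\n|^{\alpha}\,|\wh{u}_\n|^2,
$$
and since $c_0(d,\alpha)>0$ and $|2\pi\n|^{\alpha}>0$ for all $\n\neq\mathbf{0}$, this series vanishes precisely when $\wh{u}_\n=0$ for every $\n\neq\mathbf{0}$, i.e. when $u$ is constant, $u\in\mathcal{L}\{\mathbf{1}_\Omega\}$. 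Chaining the three equivalences gives $\Ker\A(\mathbf{0};\alpha,\mu)=\mathcal{L}\{\mathbf{1}_\Omega\}$.

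It remains to record the consequences for the spectrum. The constant $\mathbf{1}_\Omega$ belongs to $\wt{H}^\gamma(\Omega)$ and makes the integrand in \eqref{e1.9} with $\bxi=\mathbf{0}$ vanish identically, so it is an eigenfunction of $\A(\mathbf{0};\alpha,\mu)$ with eigenvalue $0$; since the form is non-negative, the spectrum is contained in $[0,\infty)$, so $\lambda_0=0$ is the lower edge of the spectrum. As the eigenspace $\mathcal{L}\{\mathbf{1}_\Omega\}$ is one-dimensional, the eigenvalue $\lambda_0=0$ is simple. There is no genuine obstacle here: the statement is essentially a corollary of \eqref{e1.10} and \eqref{e1.16}, and the only point requiring a word of justification is the identification of $\Ker\A(\mathbf{0};\alpha,\mu)$ with the null set of its form.
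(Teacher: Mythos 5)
Your proof is correct and follows essentially the same route as the paper: the kernel of $\A_0(\mathbf{0};\alpha)$ is read off from the Fourier diagonalization \eqref{e1.16} (equivalently Lemma \ref{lem1.2}), and the two-sided form bound \eqref{e1.10} transfers this to $\A(\mathbf{0};\alpha,\mu)$, giving $\Ker\A(\mathbf{0};\alpha,\mu)=\mathcal{L}\{\mathbf{1}_\Omega\}$ and hence simplicity of the eigenvalue $0$. Your extra step identifying the kernel of the operator with the null set of its nonnegative closed form is a standard fact the paper leaves implicit, and you justify it correctly.
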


Due to elementary estimates
\begin{align}
\label{1.18a}
& |2\pi \n+\bxi|^\alpha \ge |\bxi|^\alpha, \quad \bxi\in\widetilde\Omega,\ \ \n\in\Z^d,
\\
\label{1.18b}
& |2\pi \n+\bxi|^\alpha \ge \pi^\alpha, \quad \bxi\in\widetilde\Omega,\ \ \n\in\Z^d \setminus{\mathbf{0}},
\\
\label{1.18c}
& \min_{\n\in\Z^d \setminus{\mathbf{0}}} |2\pi \n|^\alpha = (2\pi)^\alpha,
\end{align}
and by Lemma \ref{lem1.2} the quadratic form $a_0(\bxi;\alpha)$ admits the following lower bounds:
\begin{align}
\label{e1.17}
 a_0(\bxi;\alpha)[u,u]& \ge c_0(d,\alpha) |\bxi|^{\alpha} \| u \|_{L_2(\Omega)}^2,\ \ u \in \wt{H}^\gamma(\Omega), \quad \bxi\in\widetilde\Omega,
 \\
 \label{e1.18}
 a_0(\bxi;\alpha)[u,u] &\ge c_0(d,\alpha) \pi^{\alpha} \| u \|_{L_2(\Omega)}^2,\ \ u \in \wt{H}^\gamma(\Omega), \ \ \int_\Omega u(\x)\,d\x =0, \quad \bxi\in\widetilde\Omega.
 \end{align}
Combining \eqref{e1.10}, \eqref{e1.17} and  \eqref{e1.18} we obtain, see also \cite[Proposition 1.5]{JPSS24},

\begin{proposition}[\cite{JPSS24}]
\label{prop1.5}
Let conditions  \eqref{e1.1} and \eqref{e1.2} hold,  and assume that  $1 < \alpha < 2$.
Then the form  \eqref{e1.9} admits the lower bounds
\begin{align*}
 a(\bxi;\alpha,\mu)[u,u] &\ge \mu_- c_0(d,\alpha) |\bxi|^{\alpha} \| u \|_{L_2(\Omega)}^2,\ \ u \in \wt{H}^\gamma(\Omega), \quad \bxi\in\widetilde\Omega,
 \\
 a(\bxi;\alpha,\mu)[u,u] &\ge \mu_- c_0(d,\alpha) \pi^{\alpha} \| u \|_{L_2(\Omega)}^2,\ \ u \in \wt{H}^\gamma(\Omega), \quad \int_\Omega u(\x)\,d\x =0, \quad  \bxi\in\widetilde\Omega.
 \end{align*}
\end{proposition}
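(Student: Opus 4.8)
The plan is to deduce Proposition~\ref{prop1.5} directly from the lower bounds on the free form $a_0(\bxi;\alpha)$ obtained in \eqref{e1.17} and \eqref{e1.18}, using the two-sided comparison \eqref{e1.10}. Indeed, the left inequality in \eqref{e1.10} reads $a(\bxi;\alpha,\mu)[u,u] \ge \mu_- a_0(\bxi;\alpha)[u,u]$ for all $u \in \wt{H}^\gamma(\Omega)$, so it suffices to bound $a_0(\bxi;\alpha)[u,u]$ from below. For the first asserted inequality I would simply multiply \eqref{e1.17} by $\mu_-$; for the second I would multiply \eqref{e1.18} by $\mu_-$. So at the level of the final statement there is essentially nothing beyond chaining two inequalities.

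What actually needs verification is \eqref{e1.17} and \eqref{e1.18} themselves, and here the key tool is the Fourier representation \eqref{e1.12} from Lemma~\ref{lem1.2}: $a_0(\bxi;\alpha)[u,u] = c_0(d,\alpha)\sum_{\n\in\Z^d} |2\pi\n+\bxi|^\alpha |\wh{u}_\n|^2$. For \eqref{e1.17} I would invoke the elementary bound \eqref{1.18a}, namely $|2\pi\n+\bxi|^\alpha \ge |\bxi|^\alpha$ valid for all $\n\in\Z^d$ and $\bxi\in\wt\Omega$ (which holds because $\bxi\in[-\pi,\pi)^d$ lies in the Voronoi cell of the dual lattice, so $\bxi$ is the lattice point $2\pi\mathbf 0+\bxi$ closest to the origin among all $2\pi\n+\bxi$); then
\[
a_0(\bxi;\alpha)[u,u] \ge c_0(d,\alpha)|\bxi|^\alpha \sum_{\n\in\Z^d}|\wh{u}_\n|^2 = c_0(d,\alpha)|\bxi|^\alpha \|u\|^2_{L_2(\Omega)}
\]
by Parseval's identity. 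For \eqref{e1.18}, when $\int_\Omega u\,d\x = 0$ we have $\wh{u}_{\mathbf 0}=0$, so the sum runs only over $\n\neq\mathbf 0$, and there the bound \eqref{1.18b}, $|2\pi\n+\bxi|^\alpha \ge \pi^\alpha$ for $\n\neq\mathbf 0$, $\bxi\in\wt\Omega$, gives $a_0(\bxi;\alpha)[u,u] \ge c_0(d,\alpha)\pi^\alpha\sum_{\n\neq\mathbf 0}|\wh{u}_\n|^2 = c_0(d,\alpha)\pi^\alpha\|u\|^2_{L_2(\Omega)}$, again by Parseval.

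There is no real obstacle here: the whole argument is a short combination of Lemma~\ref{lem1.2}, the trivial lattice estimates \eqref{1.18a}--\eqref{1.18b}, Parseval's identity, and the monotonicity inequality \eqref{e1.10}. The only point that requires a moment's care is the geometric fact underlying \eqref{1.18b} --- that $|\bxi|$ can be as large as $\pi\sqrt d$ while $|2\pi\n+\bxi|$ for $\n\neq\mathbf 0$ can be as small as $\pi$ (attained when $\bxi$ is on the boundary of $\wt\Omega$ and $2\pi\n$ is the neighbouring lattice point), so the constant $\pi^\alpha$ is the correct sharp one; but since \eqref{1.18b} is already recorded in the excerpt this needs no further discussion. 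The statement of Proposition~\ref{prop1.5} then follows immediately.
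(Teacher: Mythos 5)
Your proposal is correct and follows exactly the paper's own route: the paper likewise derives \eqref{e1.17}--\eqref{e1.18} from Lemma~\ref{lem1.2} together with the elementary lattice bounds \eqref{1.18a}--\eqref{1.18b} (with Parseval and the vanishing of $\wh{u}_{\mathbf 0}$ in the mean-zero case), and then obtains Proposition~\ref{prop1.5} by combining these with the left inequality in \eqref{e1.10}. Nothing is missing.
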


\section{Representation for the difference of quadratic forms $a(\bxi)$ and $a(\mathbf{0})$}

\subsection{
Estimate of the difference between quadratic forms $a(\bxi)$ and $a(\mathbf{0})$}
The difference of the quadratic forms $a(\bxi)$ and $a(\mathbf{0})$ was estimated in \cite[Lemma 2.3]{JPSS24}.
The corresponding statement reads.
\begin{lemma}[\cite{JPSS24}]
\label{lem2.4}
Let conditions \eqref{e1.1} and \eqref{e1.2} be satisfied, and assume that $1< \alpha <2$.
Then for the form  $a(\bxi)$ defined in \eqref{e1.9} the following estimate holds:
\begin{equation*}
\left|a(\bxi) [u,u] - a(\mathbf{0}) [u,u] \right| \le \check{c}(d,\alpha) |\bxi| \left( a(\mathbf{0}) [u,u] + \mu_+ \|u\|^2_{L_2(\Omega)}\right), \quad u\in \wt{H}^\gamma(\Omega), \quad \bxi \in \wt{\Omega}.
\end{equation*}
\end{lemma}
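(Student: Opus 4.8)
The plan is to reduce the difference $a(\bxi)[u,u]-a(\mathbf 0)[u,u]$ to a single integral by an elementary algebraic identity, and then to estimate that integral by splitting the domain into a near-diagonal region and a far region, applying in each a Cauchy--Schwarz inequality with an exponent tuned to the constraint $1<\alpha<2$. As a first step, using that $|e^{i\langle\bxi,\x\rangle}|=|e^{i\langle\bxi,\y\rangle}|=1$, one has the pointwise identity
\begin{equation*}
\bigl| e^{i\langle\bxi,\x\rangle}u(\x)-e^{i\langle\bxi,\y\rangle}u(\y)\bigr|^{2}-|u(\x)-u(\y)|^{2}=-2\,\mathrm{Re}\bigl((e^{i\theta}-1)\,u(\x)\overline{u(\y)}\bigr),\qquad \theta:=\langle\bxi,\x-\y\rangle .
\end{equation*}
Multiplying by $\tfrac12\mu(\x,\y)|\x-\y|^{-d-\alpha}$, integrating over $\x\in\Omega$, $\y\in\R^{d}$ (all the integrals below are absolutely convergent for $u\in\wt H^{\gamma}(\Omega)$, which legitimizes the manipulations), and then writing $u(\x)\overline{u(\y)}=(u(\x)-u(\y))\overline{u(\y)}+|u(\y)|^{2}$ together with $-\mathrm{Re}(e^{i\theta}-1)=1-\cos\theta$, I would obtain $a(\bxi)[u,u]-a(\mathbf 0)[u,u]=I(\bxi)+II(\bxi)$ with
\begin{align*}
I(\bxi)&=-\int_{\R^{d}}d\y\int_{\Omega}d\x\;\mu(\x,\y)\,\frac{\mathrm{Re}\bigl((e^{i\theta}-1)(u(\x)-u(\y))\overline{u(\y)}\bigr)}{|\x-\y|^{d+\alpha}},\\
II(\bxi)&=\int_{\R^{d}}d\y\int_{\Omega}d\x\;\mu(\x,\y)\,\frac{(1-\cos\theta)\,|u(\y)|^{2}}{|\x-\y|^{d+\alpha}} .
\end{align*}

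The term $II(\bxi)$ is the easy one. Bounding $\mu\le\mu_{+}$, unfolding the $\y$-integral by the $\Z^{d}$-periodicity of $|u(\cdot)|^{2}$ (so that the inner $\x$-integral runs, after the shift, over all of $\R^{d}$), and using the exact evaluation $\int_{\R^{d}}\frac{1-\cos\langle\bxi,\z\rangle}{|\z|^{d+\alpha}}\,d\z=c_{0}(d,\alpha)|\bxi|^{\alpha}$ — which follows from \eqref{e1.6a} by rotating $\bxi$ to $|\bxi|\mathbf{e}_{1}$ and rescaling — I get $|II(\bxi)|\le\mu_{+}c_{0}(d,\alpha)|\bxi|^{\alpha}\|u\|^{2}_{L_{2}(\Omega)}$. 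Since $|\bxi|\le\pi\sqrt d$ on $\wt\Omega$ and $\alpha>1$, this is at most $\mu_{+}c_{0}(d,\alpha)(\pi\sqrt d)^{\alpha-1}|\bxi|\,\|u\|^{2}_{L_{2}(\Omega)}$, which fits the required bound.

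For $I(\bxi)$ I would split the integration region into $Q_{1}=\{|\x-\y|\le1\}$ and $Q_{2}=\{|\x-\y|>1\}$ and use in both the elementary bound $|e^{i\theta}-1|\le|\theta|\le|\bxi|\,|\x-\y|$, which simultaneously produces the factor $|\bxi|$ and lowers the order of the kernel singularity by one. On $Q_{1}$, Cauchy--Schwarz with the kernel written as $|\x-\y|^{-(d+\alpha)/2}\cdot|\x-\y|^{-(d+\alpha-2)/2}$ gives
\begin{equation*}
|I_{Q_{1}}(\bxi)|\le|\bxi|\Bigl(\int_{Q_{1}}\mu\,\frac{|u(\x)-u(\y)|^{2}}{|\x-\y|^{d+\alpha}}\Bigr)^{1/2}\Bigl(\int_{Q_{1}}\mu\,\frac{|u(\y)|^{2}}{|\x-\y|^{d+\alpha-2}}\Bigr)^{1/2};
\end{equation*}
the first factor is at most $(2\,a(\mathbf 0)[u,u])^{1/2}$, and in the second one, after $\mu\le\mu_{+}$ and unfolding, the $\x$-integral converges precisely because $\alpha<2$ (it equals $\tfrac{\omega_{d}}{2-\alpha}$), so Young's inequality $2\sqrt{AB}\le A+B$ yields $|I_{Q_{1}}(\bxi)|\le\check c_{1}(d,\alpha)|\bxi|\bigl(a(\mathbf 0)[u,u]+\mu_{+}\|u\|^{2}_{L_{2}(\Omega)}\bigr)$. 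On $Q_{2}$ I would drop the difference structure, using $|u(\x)-u(\y)|\,|u(\y)|\le\tfrac12|u(\x)|^{2}+\tfrac32|u(\y)|^{2}$; then the variable not carrying $|u|^{2}$ can be integrated out (directly for $|u(\x)|^{2}$, after unfolding for $|u(\y)|^{2}$), and the remaining integral $\int_{|\z|>1}|\z|^{-(d+\alpha-1)}\,d\z=\tfrac{\omega_{d}}{\alpha-1}$ converges precisely because $\alpha>1$, giving $|I_{Q_{2}}(\bxi)|\le 2\mu_{+}\tfrac{\omega_{d}}{\alpha-1}|\bxi|\,\|u\|^{2}_{L_{2}(\Omega)}$.

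Adding the three bounds and using $\mu_{+}\|u\|^{2}_{L_{2}(\Omega)}\le a(\mathbf 0)[u,u]+\mu_{+}\|u\|^{2}_{L_{2}(\Omega)}$ will give the claim with $\check c(d,\alpha)=c_{0}(d,\alpha)(\pi\sqrt d)^{\alpha-1}+\check c_{1}(d,\alpha)+\tfrac{2\omega_{d}}{\alpha-1}$ (which blows up as $\alpha\to1^{+}$ or $\alpha\to2^{-}$, but that is harmless here). The main obstacle, and the only genuinely delicate point, is the treatment of $I(\bxi)$: a naive \emph{global} Cauchy--Schwarz applied to $I(\bxi)$ would only produce a factor $|\bxi|^{\alpha/2}$ rather than $|\bxi|$, so one is forced to localize, and the two endpoint restrictions $\alpha<2$ and $\alpha>1$ enter exactly through the convergence of the near and far kernel integrals, respectively. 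A secondary, purely bookkeeping matter is the repeated use of the $\Z^{d}$-periodicity of $\mu$ and $|u|^{2}$ to bound integrals of the form $\int_{\R^{d}}d\y\,|u(\y)|^{2}\int_{\Omega}d\x\,\mu(\x,\y)k(\x-\y)$ by $\mu_{+}\|u\|^{2}_{L_{2}(\Omega)}\|k\|_{L_{1}(\R^{d})}$ for nonnegative kernels $k$.
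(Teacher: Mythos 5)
Your proof is correct: the algebraic reduction of $a(\bxi)[u,u]-a(\mathbf 0)[u,u]$ to $I(\bxi)+II(\bxi)$, the split at $|\x-\y|=1$ with the Cauchy--Schwarz bound through $\int_{|\z|<1}|\z|^{-(d+\alpha-2)}\,d\z=\omega_d/(2-\alpha)$, the far-zone bound through $\int_{|\z|>1}|\z|^{-(d+\alpha-1)}\,d\z=\omega_d/(\alpha-1)$, the periodicity unfolding, and the evaluation $|II(\bxi)|\le\mu_+c_0(d,\alpha)|\bxi|^{\alpha}\|u\|^2_{L_2(\Omega)}\le\mu_+c_0(d,\alpha)(\pi\sqrt d)^{\alpha-1}|\bxi|\,\|u\|^2_{L_2(\Omega)}$ are all sound and yield the stated bound with an admissible constant. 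The paper itself defers this lemma to \cite[Lemma 2.3]{JPSS24}, but your argument is essentially the same machinery the paper uses for the finer expansions in Lemmata \ref{lem2.2_new} and \ref{lem2.3_new} (the same $b_1$/$b_2$ kernel split, the same model integrals and unfolding, with their Schur-test and Taylor-remainder refinements simply not needed at first order), so this is the intended route rather than a genuinely different one.
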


\begin{remark}
In \cite{JPSS24} an explicit formula for the constant $\check{c}(d,\alpha)$  is provided. Analysing this formula one can observe that $\check{c}(d,\alpha) \to \infty$, as $\alpha \to 1$ and as  $\alpha \to 2$.
\end{remark}

Our goal is to collect terms of order $|\bxi|$ in the difference $a(\bxi) [u,u] - a(\mathbf{0}) [u,u]$ and  estimate the remainder. To this end, it is convenient to consider the contributions of neighbourhoods of zero and infinity separately.
Letting
\begin{equation*}
b_1(\z)  = \frac{\1_{B_1(\mathbf{0})}(\z)}{|\z|^{d+\alpha}},
\quad  b_2(\z)  = \frac{\1_{\R^d \setminus B_1(\mathbf{0})}(\z)}{|\z|^{d+\alpha}}, \quad \z \in \R^d,
\end{equation*}
we represent the form $a(\bxi) [u,u]$ as
\begin{equation}
\label{a=a1+a2}
a(\bxi) [u,u] = a_1(\bxi) [u,u] + a_2(\bxi)[u,u], \quad u\in \wt{H}^\gamma(\Omega),
\end{equation}
with
 \begin{align}
 \label{a1=}
a_1(\bxi) [u,u]  := \frac{1}{2} \int_{\R^d} d\y \int_{\Omega} d\x\, \mu(\x,\y) b_1(\x - \y)
 \bigl| u(\x)- e^{i \langle \bxi,\y -\x\rangle} u(\y) \bigr|^2,\quad u\in \wt{H}^\gamma(\Omega),
 \\
 \label{a2=}
a_2(\bxi) [u,u]  := \frac{1}{2} \int_{\R^d} d\y \int_{\Omega} d\x\, \mu(\x,\y) b_2(\x - \y)
\bigl| u(\x)- e^{i \langle \bxi,\y -\x\rangle} u(\y) \bigr|^2,\quad  u\in \wt{H}^\gamma(\Omega).
\end{align}

\subsection{Representation of the quadratic form $a_1(\bxi)$}

\begin{lemma}
\label{lem2.2_new}
Let conditions \eqref{e1.1} and \eqref{e1.2} be fulfilled, and assume that  $1< \alpha <2$.
Then the form $a_1(\bxi)$ introduced in \eqref{a1=} admits the following representation{\rm :}
\begin{equation}
\label{2.5_new}
a_1(\bxi) [u,u] =  a_1(\mathbf{0}) [u,u] + \sum_{j=1}^d \xi_j a_1^{(j)}[u,u] +  \wt{a}_1(\bxi) [u,u], \quad
u\in \wt{H}^\gamma(\Omega), \quad \bxi \in \wt{\Omega}.
\end{equation}
Here
\begin{equation}
\label{2.6_new}
a_1^{(j)} [u,u] := - \int_{\R^d} d\y \int_{\Omega} d\x\, \mu(\x,\y) b_1(\x - \y) (x_j-y_j)
 \operatorname{Re} \left( i u(\x) \overline{u(\y)} \right) ,\quad u\in \wt{H}^\gamma(\Omega).
\end{equation}
Moreover,
 \begin{align}
 \label{a1j_le}
\bigl| a_1^{(j)} [u,u] \bigr| &\le c_1(d,\alpha)  \left( a_1(\mathbf{0}) [u,u] + \mu_+ \|u\|^2_{L_2(\Omega)}\right), \quad u\in \wt{H}^\gamma(\Omega), \quad j=1,\dots,d,
\\
\label{a1tilde}
\left| \wt{a}_1(\bxi) [u,u] \right| &\le \wt{c}_1(d,\alpha) |\bxi |^2 \left( a_1(\mathbf{0}) [u,u] + \mu_+ \|u\|^2_{L_2(\Omega)}\right), \quad u\in \wt{H}^\gamma(\Omega), \quad  \bxi \in \wt{\Omega}.
\end{align}
\end{lemma}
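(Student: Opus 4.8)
The plan is to extract the first-order term in $\bxi$ from the integrand of $a_1(\bxi)$ by Taylor-expanding the phase factor $e^{i\langle\bxi,\y-\x\rangle}$, and then to control the linear coefficient and the quadratic remainder by the Cauchy--Schwarz inequality together with the elementary bound $\int_{B_1(\mathbf 0)}|\z|^{2-d-\alpha}\,d\z=\omega_d(2-\alpha)^{-1}<\infty$, which is available precisely because $\alpha<2$. Concretely, I would set $\psi:=\langle\bxi,\x-\y\rangle=\sum_{j=1}^d\xi_j(x_j-y_j)$ and $\rho(\psi):=e^{i\psi}-1-i\psi$, so that $|\rho(\psi)|\le\tfrac12\psi^2$, and start from the pointwise identity
\[
|u(\x)-e^{i\langle\bxi,\y-\x\rangle}u(\y)|^2-|u(\x)-u(\y)|^2
=-2\psi\operatorname{Re}\big(iu(\x)\overline{u(\y)}\big)-2\operatorname{Re}\big(\rho(\psi)u(\x)\overline{u(\y)}\big),
\]
obtained from $|u(\x)-e^{i\langle\bxi,\y-\x\rangle}u(\y)|^2=|u(\x)|^2+|u(\y)|^2-2\operatorname{Re}\big(e^{i\psi}u(\x)\overline{u(\y)}\big)$ and the case $\psi=0$. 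Multiplying by $\tfrac12\mu(\x,\y)b_1(\x-\y)$ and integrating over $\x\in\Omega$, $\y\in\R^d$ then produces \eqref{2.5_new}, with $a_1^{(j)}$ exactly as in \eqref{2.6_new} and $\wt a_1(\bxi)[u,u]=-\int_{\R^d}d\y\int_\Omega d\x\,\mu(\x,\y)b_1(\x-\y)\operatorname{Re}\big(\rho(\psi)u(\x)\overline{u(\y)}\big)$, once the absolute convergence of the two integrals on the right is known --- and that is exactly what \eqref{a1j_le} and \eqref{a1tilde} assert, so I would prove those first. (The left-hand side is finite for $u\in\wt H^\gamma(\Omega)$, since $a_1(\mathbf 0)[u,u]\le a(\mathbf 0)[u,u]<\infty$ by \eqref{e1.10} and Lemma \ref{lem1.2}, and $a_1(\bxi)[u,u]$ is controlled likewise.)

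For \eqref{a1j_le} the key step is to rewrite $\operatorname{Re}\big(iu(\x)\overline{u(\y)}\big)=\operatorname{Re}\big(i(u(\x)-u(\y))\overline{u(\y)}\big)$, which is legitimate because $\operatorname{Re}(i|u(\y)|^2)=0$; combined with $|x_j-y_j|\le|\x-\y|$ this gives $\big|a_1^{(j)}[u,u]\big|\le\int_{\R^d}d\y\int_\Omega d\x\,\mu(\x,\y)b_1(\x-\y)\,|x_j-y_j|\,|u(\x)-u(\y)|\,|u(\y)|$. By Cauchy--Schwarz the right-hand side is $\le\big(2a_1(\mathbf 0)[u,u]\big)^{1/2}\big(\int_{\R^d}d\y\int_\Omega d\x\,\mu(\x,\y)b_1(\x-\y)\,|x_j-y_j|^2\,|u(\y)|^2\big)^{1/2}$. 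In the second factor I would integrate in $\x$ first, bound $\mu\le\mu_+$, and enlarge $\Omega$ to $\R^d$ (the integrand being nonnegative): the inner integral becomes $\le\mu_+\int_{B_1(\mathbf 0)}|z_j|^2|\z|^{-d-\alpha}\,d\z=\mu_+\omega_d\big(d(2-\alpha)\big)^{-1}$, and it vanishes unless $\y\in\Omega+B_1(\mathbf 0)\subset(-1,2)^d$; since $u$ is $\Z^d$-periodic, $\int_{\Omega+B_1(\mathbf 0)}|u(\y)|^2\,d\y\le3^d\|u\|^2_{L_2(\Omega)}$. The elementary inequality $2\sqrt{PQ}\le P+Q$ then gives \eqref{a1j_le} with $c_1(d,\alpha)=\max\{1,\,3^d\omega_d/(2d(2-\alpha))\}$.

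The estimate \eqref{a1tilde} is obtained in the same spirit but more easily: using $|\rho(\psi)|\le\tfrac12\psi^2\le\tfrac12|\bxi|^2|\x-\y|^2$ and $|u(\x)|\,|u(\y)|\le\tfrac12\big(|u(\x)|^2+|u(\y)|^2\big)$, I would bound $|\wt a_1(\bxi)[u,u]|$ by $\tfrac14|\bxi|^2$ times the sum of $\int_\Omega d\x\,|u(\x)|^2\int_{\R^d}d\y\,\mu(\x,\y)b_1(\x-\y)\,|\x-\y|^2$ and $\int_{\R^d}d\y\,|u(\y)|^2\int_\Omega d\x\,\mu(\x,\y)b_1(\x-\y)\,|\x-\y|^2$; each inner integral is $\le\mu_+\int_{B_1(\mathbf 0)}|\z|^{2-d-\alpha}\,d\z=\mu_+\omega_d(2-\alpha)^{-1}$, and in the second sum the $\y$-integration is again confined to $\Omega+B_1(\mathbf 0)$, so the total is $\le\mu_+3^d\omega_d(2-\alpha)^{-1}\|u\|^2_{L_2(\Omega)}$. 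This yields \eqref{a1tilde} with $\wt c_1(d,\alpha)=3^d\omega_d/(2(2-\alpha))$, indeed in the stronger form in which $a_1(\mathbf 0)[u,u]$ may be dropped from the right-hand side. Both constants are $O((2-\alpha)^{-1})$ as $\alpha\to2$, consistent with the Remark above.

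I do not expect a genuine obstacle here; the two things that need a little care are the bookkeeping of absolute convergence that makes the termwise splitting in \eqref{2.5_new} legitimate (which is secured once \eqref{a1j_le} and \eqref{a1tilde} are in hand), and the fact that $\y$ ranges over all of $\R^d$ whereas $u$ is merely $\Z^d$-periodic --- harmless because the cutoff kernel $b_1(\x-\y)$ restricts the effective $\y$-domain to the bounded set $\Omega+B_1(\mathbf 0)$.
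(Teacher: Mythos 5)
Your proof is correct, and it follows the same basic strategy as the paper (second-order expansion of the exponential, the cancellation $\operatorname{Re}(i|u(\y)|^2)=0$ which replaces $u(\x)$ by $u(\x)-u(\y)$ in the linear term, and Cauchy--Schwarz combined with $\int_{B_1(\mathbf 0)}|\z|^{2-d-\alpha}\,d\z=\omega_d(2-\alpha)^{-1}<\infty$), but it is organized differently in two respects. First, you expand only the cross term $\operatorname{Re}\bigl(e^{i\psi}u(\x)\overline{u(\y)}\bigr)$ via $e^{i\psi}=1+i\psi+\rho(\psi)$, $|\rho(\psi)|\le\tfrac12\psi^2$, whereas the paper inserts the expansion inside the squared modulus and then expands the square; the paper's route produces an extra fourth-order piece $\wt a_1''$, which forces the splitting $\wt a_1=\wt a_1'+\wt a_1''$ and the use of $|\bxi|\le\pi\sqrt d$ to absorb $|\bxi|^4$ into $|\bxi|^2$, while your single remainder is bounded directly by $C|\bxi|^2\mu_+\|u\|^2_{L_2(\Omega)}$ --- a formally stronger estimate (no $a_1(\mathbf 0)$ term and no restriction on $|\bxi|$), which of course implies \eqref{a1tilde}. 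Second, in estimating $a_1^{(j)}$ the paper first folds the $\y$-integration onto $\Omega$ using the periodicity of $\mu$ and $u$ and then applies Cauchy--Schwarz in $\x$ alone, while you apply Cauchy--Schwarz on the product domain and handle the noncompact $\y$-range by observing that $b_1$ confines $\y$ to $\Omega+B_1(\mathbf 0)\subset(-1,2)^d$, paying a harmless $3^d$ in the constant (so your constants differ from the paper's, e.g. $c_1=\max\{1,3^d\omega_d/(2d(2-\alpha))\}$ versus $\omega_d^{1/2}(2(2-\alpha))^{-1/2}$, but both blow up only as $\alpha\to2$, as they must). Your attention to absolute convergence --- noting that the integrand in \eqref{2.6_new} coincides pointwise with the absolutely integrable one involving $u(\x)-u(\y)$ --- is exactly the point that makes the termwise splitting in \eqref{2.5_new} legitimate, and the paper relies on the same observation.
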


\begin{proof}[Proof]
We begin the proof by recalling an elementary representation
 \begin{equation}
 \label{exp=}
e^{i\lambda} = 1 + i \lambda + \lambda^2 F(\lambda), \quad \lambda \in \R,
\end{equation}
where the function $F$ satisfies the inequality
 \begin{equation}
 \label{2.10_new}
 | F(\lambda)| \le \frac{1}{2}, \quad \lambda \in \R.
\end{equation}
According to \eqref{a1=} and \eqref{exp=}, for functions  $u\in \wt{H}^\gamma(\Omega)$  we have
 \begin{equation*}
 \begin{aligned}
a_1(\bxi) [u,u] =&
 \frac{1}{2} \int_{\R^d} d\y \int_{\Omega} d\x\, \mu(\x,\y) b_1(\x - \y)
 \\
&\qquad\qquad \times \left| u(\x)- u(\y) + i \langle \bxi,\x -\y \rangle u(\y) -  \langle \bxi,\x -\y \rangle^2 F(\langle \bxi,\y -\x \rangle)  u(\y)\right|^2.
 \end{aligned}
\end{equation*}
Therefore,
 \begin{equation}
\label{2.11_new}
a_1(\bxi) [u,u] = a_1(\mathbf{0}) [u,u] + \sum_{j=1}^d \xi_j a_1^{(j)}[u,u] + \wt{a}_1(\bxi) [u,u],
\end{equation}
where
 \begin{align}
 \label{2.12_new}
 a_1^{(j)}[u,u] & = - \int_{\R^d}\! d\y\! \int_{\Omega} d\x\, \mu(\x,\y) b_1(\x - \y) (x_j-y_j) \operatorname{Re} \left( i
 (u(\x) - u(\y)) \overline{u(\y)} \right)\!\!,
 \\
 \label{2.13_new}
 \wt{a}_1(\bxi) [u,u] & =  \wt{a}_1'(\bxi) [u,u] + \wt{a}_1''(\bxi) [u,u],
 \\
 \label{2.14_new}
  \wt{a}_1'(\bxi) [u,u] &\! =  - \int_{\R^d}\! d\y\! \int_{\Omega}\! d\x\, \mu(\x,\y) b_1(\x - \y) \langle \bxi,\x -\y \rangle^2
   \operatorname{Re} \left(  \overline{F(  \langle \bxi,\y -\x \rangle )}
 (u(\x) - u(\y)) \overline{u(\y)} \right)\!\!,
  \\
  \label{2.15_new}
   \wt{a}_1''(\bxi) [u,u] &\! =  \frac{1}{2} \int_{\R^d}\! d\y\! \int_{\Omega} d\x\, \mu(\x,\y) b_1(\x - \y)
 \left|  i \langle \bxi,\x -\y \rangle -  \langle \bxi,\x -\y \rangle^2 F(\langle \bxi,\y -\x \rangle) \right|^2 |u(\y)|^2\!\!.
 \end{align}
 Now the representations in \eqref{2.5_new} and \eqref{2.6_new} follow from \eqref{2.11_new},  \eqref{2.12_new}
 and a trivial relation $ \operatorname{Re} (-i |u(\y)|^2)=0$.

We rearrange the form in \eqref{2.12_new} taking into account the periodicity of functions $\mu(\x,\y)$ и $u(\y)$:
\begin{equation}
\label{2.16_new}
\begin{aligned}
 &a_1^{(j)}[u,u]  = - \sum_{\n \in \Z^d} \int_{\Omega + \n} d\y \int_{\Omega} d\x\, \mu(\x,\y) b_1(\x - \y) (x_j-y_j) \operatorname{Re} \left( i  (u(\x) - u(\y)) \overline{u(\y)} \right)
\\
 &= - \sum_{\n \in \Z^d} \int_{\Omega} d\y \int_{\Omega} d\x\, \mu(\x,\y) b_1(\x - \y - \n) (x_j-y_j - n_j) \operatorname{Re} \left( i  (u(\x) - u(\y)) \overline{u(\y)} \right)
 \\
 &= - \operatorname{Re} \left( i \int_{\Omega} d\y\, \overline{u(\y)} \int_{\R^d} d\x\, \mu(\x,\y) b_1(\x - \y) (x_j-y_j)   (u(\x) - u(\y)) \right).
 \end{aligned}
\end{equation}
By the Cauchy-Schwartz inequality
\begin{equation}
\label{2.17_newnew}
\begin{aligned}
&\left| \int_{\R^d} d\x\, \mu(\x,\y) b_1(\x - \y) (x_j-y_j)   (u(\x) - u(\y))   \right|^2
\\
&\le \left( \int_{\R^d} d\x\, \mu(\x,\y) b_1(\x - \y) (x_j-y_j)^2 \right) \left( \int_{\R^d} d\x\, \mu(\x,\y) b_1(\x - \y)
|u(\x) - u(\y)|^2 \right).
 \end{aligned}
\end{equation}
Considering  \eqref{a1=} with $\bxi = \mathbf{0}$  and the relation
$$
\int_{\R^d} d\x\, \mu(\x,\y) b_1(\x - \y) (x_j-y_j)^2 \le \mu_+ \int_{|\z| <1} \frac{d\z}{ |\z|^{d+\alpha -2}}
= \mu_+ \omega_d \int_0^1 \frac{dr}{r^{\alpha -1}} = \mu_+ \frac{\omega_d}{2-\alpha},
$$
from \eqref{2.16_new} and \eqref{2.17_newnew} we obtain
$$
\begin{aligned}
&\bigl| a_1^{(j)}[u,u] \bigr| \le \Big(\mu_+ \frac{\omega_d}{2-\alpha}\Big)^{1/2} \int_\Omega d\y |u(\y)|
\left( \int_{\R^d} d\x\, \mu(\x,\y) b_1(\x - \y) |u(\x) - u(\y)|^2 \right)^{1/2}
\\
&\le \left(\mu_+ \frac{\omega_d}{2-\alpha}\right)^{1/2} \|u\|_{L_2(\Omega)} \left(2 a_1(\mathbf{0})[u,u] \right)^{1/2}
\le \left(\frac{\omega_d}{2(2-\alpha)}\right)^{1/2} \left( a_1(\mathbf{0})[u,u] + \mu_+  \|u\|_{L_2(\Omega)}^2 \right).
\end{aligned}
$$
This yields inequality \eqref{a1j_le} with a constant $c_1(d,\alpha) = \omega_d^{1/2} (2(2-\alpha))^{-1/2}$.

We turn to estimating the form in \eqref{2.14_new}.  Similarly to \eqref{2.16_new} we have
$$
  \wt{a}_1'(\bxi) [u,u] \! =  - \operatorname{Re} \left( \int_{\Omega}\!\! d\y\, \overline{u(\y)}\! \int_{\R^d}\!\! d\x\, \mu(\x,\y) b_1(\x - \y)  \langle \bxi,\x -\y \rangle^2
  \overline{F(  \langle \bxi,\y -\x \rangle )} (u(\x) - u(\y))\! \right)\!.
$$
Using the Cauchy-Schwartz inequality and considering \eqref{2.10_new} one can estimate the integral over $\mathbb R^d$
on the right-hand side as follows:
$$
\begin{aligned}
&\left| \int_{\R^d} d\x\, \mu(\x,\y) b_1(\x - \y)  \langle \bxi,\x -\y \rangle^2
  \overline{F(  \langle \bxi,\y -\x \rangle )} (u(\x) - u(\y)) \right|^2
  \\
   &\le
  \left( \frac{1}{4} \int_{\R^d} d\x\, \mu(\x,\y) b_1(\x - \y)  \langle \bxi,\x -\y \rangle^4 \right)
  \left( \int_{\R^d} d\x\, \mu(\x,\y) b_1(\x - \y) |u(\x) - u(\y)|^2\right).
  \end{aligned}
$$
Combining  \eqref{a1=} with $\bxi = \mathbf{0}$ and the relation
$$
\begin{array}{rl}
\displaystyle
\frac{1}{4}\int_{\R^d} d\x\, \mu(\x,\y) b_1(\x - \y)\langle \bxi,\x -\y \rangle^4 \!\!&
\displaystyle
\!
 \le \frac{\mu_+}{4}|\bxi|^4 \int_{|\z| <1} \frac{d\z}{ |\z|^{d+\alpha -4}}=\\[3mm]
&\displaystyle
= \frac{\mu_+}{4}|\bxi|^4 \omega_d \int_0^1 \frac{dr}{r^{\alpha -3}} = \mu_+|\bxi|^4 \frac{\omega_d}{4(4-\alpha)},
\end{array}
$$
we conclude that
\begin{equation}
\label{2.18_new}
\begin{aligned}
&  \left| \wt{a}_1'(\bxi) [u,u] \right| \le |\bxi|^2\! \left( \frac{\mu_+ \omega_d}{4(4-\alpha)} \right)^{\frac12}\!\!\!
\int_\Omega \!d\y |u(\y)| \left(\int_{\R^d}\!\! d\x\, \mu(\x,\y) b_1(\x - \y) |u(\x) - u(\y)|^2 \!  \right)^{\frac12}
\\
&\le  |\bxi|^2 \Big( \frac{\mu_+ \omega_d}{4(4-\alpha)}\! \Big)^{\frac12}\! \|u\|\big._{L_2(\Omega)}
\left( 2 a_1(\mathbf{0})[u,u]\right)^{1/2}\! \le  \wt{c}_1'(d,\alpha) |\bxi|^2\!
\left(  a_1(\mathbf{0})[u,u] + \mu_+ \|u\|^2_{L_2(\Omega)}\right)
 \end{aligned}
\end{equation}
with a constant $\wt{c}_1'(d,\alpha) = \frac{1}{2\sqrt{2}}  \omega_d^{1/2}(4-\alpha)^{-1/2}$.

It remains to estimate the form in  \eqref{2.15_new}. In the same way as in  \eqref{2.16_new} we have
$$
   \wt{a}_1''(\bxi) [u,u] =  \frac{1}{2} \intop_{\Omega} d\y  |u(\y)|^2 \intop_{\R^d} d\x\, \mu(\x,\y) b_1(\x - \y)
 \left|  i \langle \bxi,\x -\y \rangle -  \langle \bxi,\x -\y \rangle^2 F(\langle \bxi,\y -\x \rangle) \right|^2.
$$
In view of \eqref{2.10_new}, the integral over $\R^d$ on the right-hand side admits the estimate
$$
\begin{aligned}
& \int_{\R^d} d\x\, \mu(\x,\y) b_1(\x - \y)
 \left|  i \langle \bxi,\x -\y \rangle -  \langle \bxi,\x -\y \rangle^2 F(\langle \bxi,\y -\x \rangle) \right|^2
 \\
 & \le \mu_+ \int_{|\z| <1} d\z\, \left(\frac{2 |\bxi|^2}{|\z|^{d+\alpha-2}} + \frac{|\bxi|^4}{2 |\z|^{d+\alpha -4}}\right)
 = \mu_+ \omega_d \left( \frac{2|\bxi|^2}{2-\alpha} + \frac{|\bxi|^4}{2(4-\alpha)} \right).
 \end{aligned}
$$
Since $|\bxi| \le \pi \sqrt{d}$ for all $\bxi \in \wt{\Omega}$, we finally arrive at the estimate
\begin{equation}
\label{2.19_new}
 \left| \wt{a}_1''(\bxi) [u,u] \right| \le \wt{c}_1''(d,\alpha) |\bxi|^2 \mu_+ \| u\|^2_{L_2(\Omega)},
\end{equation}
where $\wt{c}_1''(d,\alpha) = \omega_d ((2-\alpha)^{-1} + \frac{1}{4} \pi^2 d (4-\alpha)^{-1})$.

Combining  \eqref{2.13_new}, \eqref{2.18_new} and \eqref{2.19_new} yields the desired estimate \eqref{a1tilde}
with a constant $\wt{c}_1(d,\alpha) = \wt{c}_1'(d,\alpha) + \wt{c}_1''(d,\alpha)$.
\end{proof}

\begin{remark}
\label{rem2.4}
From the explicit expressions for the constants ${c}_1(d,\alpha)$ and $\wt{c}_1(d,\alpha)$ it is clear that
\hbox{${c}_1(d,\alpha)\to \infty$} and $\wt{c}_1(d,\alpha) \to \infty$, as $\alpha \to 2$.
\end{remark}

\subsection{Representation of the quadratic form $a_2(\bxi)$}
After straightforward rearrangements the quadratic form in  \eqref{a2=} takes the form
\begin{equation}
\label{2.22_new}
\begin{aligned}
a_2(\bxi)[u,u] &=  \frac{1}{2} \int_{\R^d} d\y \int_{\Omega} d\x\, \mu(\x,\y) b_2(\x - \y) |u(\x)|^2
+  \frac{1}{2} \int_{\R^d} d\y \int_{\Omega} d\x\, \mu(\x,\y) b_2(\x - \y) |u(\y)|^2
\\
&-  \operatorname{Re} \int_{\R^d} d\y \int_{\Omega} d\x\, \mu(\x,\y) b_2(\x - \y) e^{i \langle \bxi,\x -\y\rangle}
  u(\x) \overline{u(\y)}.
  \end{aligned}
\end{equation}
Letting
\begin{equation*}
V(\x) := \int_{\R^d} \mu(\x,\y) b_2(\x - \y)\,d\y
\end{equation*}
and considering the relations
$$
\int_{\R^d}  b_2(\z) \,d\z = \int_{|\z| > 1}  \frac{d\z}{|\z|^{d+\alpha}} =
 \omega_d \int_1^\infty \frac{dr}{r^{1+\alpha}} = \frac{\omega_d}{\alpha},
$$
we conclude that $V(\x)$ satisfies the estimates
\begin{equation}
\label{2.23a_new}
\mu_- \frac{\omega_d}{\alpha} \le V(\x) \le \mu_+ \frac{\omega_d}{\alpha},\quad \x \in \Omega.
\end{equation}
Clearly, the first term on the right-hand side of \eqref{2.22_new} is equal to
$
 \frac{1}{2} \int_{\Omega}  V(\x) |u(\x)|^2\, d\x.
$
The second term can be rearranged in the same way as in \eqref{2.16_new}:
$$
\begin{aligned}
 & \frac{1}{2} \int_{\R^d} d\y \int_{\Omega} d\x\, \mu(\x,\y) b_2(\x - \y) |u(\y)|^2
 \\
& = \frac{1}{2} \int_{\Omega} d\y \int_{\R^d} d\x\, \mu(\x,\y) b_2(\x - \y) |u(\y)|^2 =&
   \frac{1}{2} \int_{\Omega}  V(\y)  |u(\y)|^2 \,d\y.
   \end{aligned}
$$
The third term admits the representation
$$
\begin{aligned}
&\operatorname{Re} \int_{\R^d} d\y \int_{\Omega} d\x\, \mu(\x,\y) b_2(\x - \y) e^{i \langle \bxi,\x -\y\rangle}
  u(\x) \overline{u(\y)}
  \\
  &= \operatorname{Re}  \int_{\Omega} d\y \int_{\Omega} d\x\, \mu(\x,\y) \wt{b}_2(\bxi,\x - \y)
  u(\x) \overline{u(\y)} =  \operatorname{Re} \,(K(\bxi) u,u)_{L_2(\Omega)},
  \end{aligned}
$$
where
$$
\wt{b}_2(\bxi,\z) := \sum_{\n \in \Z^d} b_2(\z+\n) e^{i \langle \bxi,\z +\n\rangle},
$$
and $K(\bxi)$ is the integral operator given by
$$
(K(\bxi) u)(\y) = \int_{\Omega} d\x\, \mu(\x,\y) \wt{b}_2(\bxi,\x - \y)  u(\x).
$$
The operator $K(\bxi)$ is bounded in $L_2(\Omega)$, its norm can be estimated by means of the Shur test, see,
for example, \cite[Appendix A]{Grafakos}:
\begin{equation}
\label{2.23b_new}
\| K(\bxi) \|_{L_2(\Omega) \to L_2(\Omega)}^2 \le
\left( \sup_{\y \in \Omega} \int_\Omega \mu(\x,\y) | \wt{b}_2(\bxi,\x - \y) | \,d\x \right)
\left( \sup_{\x \in \Omega} \int_\Omega \mu(\x,\y)  |\wt{b}_2(\bxi,\x - \y)| \,d\y \right).
\end{equation}
The first term on the right-hand side can be estimated as follows:
\begin{equation}
\label{2.23c_new}
 \sup_{\y \in \Omega} \int_\Omega \mu(\x,\y) |\wt{b}_2(\bxi,\x - \y)| \,d\x \le \mu_+
\sup_{\y \in \Omega} \int_{\Omega} | \wt{b}_2(\bxi,\x - \y)|\,d\x \le \mu_+  \int_{\R^d}  {b}_2(\z)\,d\z = \mu_+ \frac{\omega_d}{\alpha}.
\end{equation}
The second term admits a similar estimate, and we obtain
\begin{equation}
\label{2.23d_new}
\| K(\bxi) \|_{L_2(\Omega) \to L_2(\Omega)} \le  \mu_+ \frac{\omega_d}{\alpha}.
\end{equation}
Finally, we have
\begin{equation}
\label{2.24_new}
a_2(\bxi)[u,u] =  \int_{\Omega}  V(\x)  |u(\x)|^2\,d\x
-  \operatorname{Re} \, (K(\bxi) u,u)_{L_2(\Omega)}.
\end{equation}
Therefore, the form $a_2(\bxi)[u,u]$ is bounded and can be extended to the whole $ L_2(\Omega)$.
Moreover, thanks to \eqref{2.23a_new}, \eqref{2.23d_new} and \eqref{2.24_new} we have
\begin{equation*}
a_2(\bxi)[u,u] \le 2 \mu_+ \frac{\omega_d}{\alpha} \|u\|_{L_2(\Omega)}^2, \quad u\in L_2(\Omega).
\end{equation*}

\begin{lemma}
\label{lem2.3_new}
Let conditions  \eqref{e1.1}, \eqref{e1.2} be fulfilled, and assume that $1< \alpha <2$.
Then the form $a_2(\bxi)$  defined in \eqref{a2=} can be extended to the whole space $ L_2(\Omega)$, and
the following representation is valid{\rm :}
\begin{equation}
\label{2.20_new}
a_2(\bxi) [u,u] =  a_2(\mathbf{0}) [u,u] + \sum_{j=1}^d \xi_j a_2^{(j)}[u,u] +  \wt{a}_2(\bxi) [u,u], \quad u \in L_2(\Omega),\quad \bxi \in \wt{\Omega};
\end{equation}
here
\begin{equation}
\label{2.21_new}
a_2^{(j)} [u,u] := - \int_{\R^d} d\y \int_{\Omega} d\x\, \mu(\x,\y) b_2(\x - \y) (x_j-y_j)
 \operatorname{Re} \left( i u(\x) \overline{u(\y)} \right) ,\quad u\in L_2(\Omega).
\end{equation}
Moreover,
 \begin{align}
 \label{a2j_le}
\bigl| a_2^{(j)} [u,u] \bigr| & \le c_2(d,\alpha) \mu_+ \|u\|^2_{L_2(\Omega)}, \quad u\in L_2(\Omega), \quad j=1,\dots,d,
\\
\label{a2tilde}
\left| \wt{a}_2(\bxi) [u,u] \right|  & \le \wt{c}_2(d,\alpha) |\bxi |^\alpha  \mu_+ \|u\|^2_{L_2(\Omega)}, \quad u\in L_2(\Omega), \quad \bxi \in \wt{\Omega}.
\end{align}
\end{lemma}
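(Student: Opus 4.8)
The plan is to start from the explicit representation \eqref{2.22_new} of $a_2(\bxi)[u,u]$, which together with \eqref{2.24_new} is already known to define a bounded form on all of $L_2(\Omega)$; this already settles the claimed extendability. Since the first two summands on the right-hand side of \eqref{2.22_new} do not depend on $\bxi$, subtracting the value at $\bxi=\mathbf{0}$ gives
\begin{equation*}
a_2(\bxi)[u,u] - a_2(\mathbf{0})[u,u] = - \operatorname{Re} \int_{\R^d} d\y \int_\Omega d\x\, \mu(\x,\y) b_2(\x-\y) \bigl( e^{i\langle \bxi, \x - \y\rangle} - 1 \bigr) u(\x) \overline{u(\y)}.
\end{equation*}
I would then use $e^{i\lambda} - 1 = i\lambda + \psi(\lambda)$, where $\psi(\lambda) := e^{i\lambda} - 1 - i\lambda = \lambda^2 F(\lambda)$, so that by \eqref{2.10_new} one has $|\psi(\lambda)| \le \tfrac12\lambda^2$ and, since $|e^{i\lambda}-1| \le |\lambda|$, also $|\psi(\lambda)| \le 2|\lambda|$. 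With $\lambda = \langle\bxi,\x-\y\rangle = \sum_j \xi_j(x_j-y_j)$, the term linear in $\bxi$ produces $\sum_{j=1}^d \xi_j a_2^{(j)}[u,u]$ with $a_2^{(j)}$ as in \eqref{2.21_new} (passing between the two equivalent forms of $a_2^{(j)}$ via $\operatorname{Re}(-i|u(\y)|^2)=0$, exactly as in the proof of Lemma \ref{lem2.2_new}), and the remainder is $\wt{a}_2(\bxi)[u,u] = -\operatorname{Re}\int_{\R^d} d\y\int_\Omega d\x\, \mu(\x,\y) b_2(\x-\y)\psi(\langle\bxi,\x-\y\rangle) u(\x)\overline{u(\y)}$. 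This yields \eqref{2.20_new} on the dense subspace $\wt{H}^\gamma(\Omega)$, and then on $L_2(\Omega)$ by continuity once the bounds below are in place.

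For \eqref{a2j_le} I would bound $|x_j - y_j| \le |\x-\y|$, use $|u(\x)||u(\y)| \le \tfrac12(|u(\x)|^2+|u(\y)|^2)$, and fold the $\y$-integral into $\Omega$ using the periodicity of $\mu(\x,\cdot)$ and of the periodic extension of $u$ (as in \eqref{2.16_new}); both resulting terms are then dominated by $\mu_+\|u\|^2_{L_2(\Omega)}\int_{\R^d} b_2(\z)|\z|\,d\z$, and
\begin{equation*}
\int_{\R^d} b_2(\z)|\z|\,d\z = \omega_d\int_1^\infty \frac{dr}{r^\alpha} = \frac{\omega_d}{\alpha-1} < \infty
\end{equation*}
precisely because $\alpha > 1$. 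This yields \eqref{a2j_le} with $c_2(d,\alpha) = \omega_d/(\alpha-1)$. For \eqref{a2tilde} the same scheme applies: bounding $|\psi(\langle\bxi,\x-\y\rangle)| \le \min\{\tfrac12|\bxi|^2|\x-\y|^2,\ 2|\bxi|\,|\x-\y|\}$ and repeating the AM--GM and folding steps reduces everything to the inequality
\begin{equation*}
\int_{\R^d} b_2(\z)\min\bigl\{\tfrac12|\bxi|^2|\z|^2,\ 2|\bxi|\,|\z|\bigr\}\,d\z \le \wt{c}_2(d,\alpha)|\bxi|^\alpha, \quad \bxi\in\wt{\Omega}.
\end{equation*}

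The hard part is precisely this last estimate, which also explains why the exponent is $\alpha$ and not $2$ — it is dictated by the heavy tail of $b_2$ rather than by the smoothness of the exponential, so no second-order Taylor expansion as in Lemma \ref{lem2.2_new} is available here. I would prove it by splitting the radial integral at $|\z| = \max\{1,|\bxi|^{-1}\}$: on $1 < |\z| < |\bxi|^{-1}$ (nonempty only when $|\bxi|<1$) use the quadratic bound, giving a contribution $\tfrac12|\bxi|^2\omega_d\int_1^{|\bxi|^{-1}} r^{1-\alpha}\,dr \le \tfrac{\omega_d}{2(2-\alpha)}|\bxi|^\alpha$ (finite since $\alpha<2$); on $|\z| > \max\{1,|\bxi|^{-1}\}$ use the linear bound, giving $2|\bxi|\omega_d\int_{\max\{1,|\bxi|^{-1}\}}^\infty r^{-\alpha}\,dr \le \tfrac{2\omega_d}{\alpha-1}|\bxi|^\alpha$, where for $|\bxi|\ge 1$ one additionally uses $|\bxi|\le|\bxi|^\alpha$. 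Summing the two contributions gives \eqref{a2tilde} with $\wt{c}_2(d,\alpha) = \omega_d\bigl(\tfrac{1}{2(2-\alpha)} + \tfrac{2}{\alpha-1}\bigr)$; note that, as in Remark \ref{rem2.4}, this blows up both as $\alpha\to1$ and as $\alpha\to2$. Apart from this radial integral, every step is a direct transcription of the argument for $a_1(\bxi)$ in Lemma \ref{lem2.2_new}, with the moments of $b_1$ over the unit ball replaced by the corresponding moments of $b_2$ over its complement.
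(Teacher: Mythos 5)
Your proposal is correct, and it reaches the bound \eqref{a2tilde} by a genuinely different route than the paper. The common part is the starting point and the linear term: both arguments begin from the bounded representation \eqref{2.24_new}, expand $e^{i\langle\bxi,\x-\y\rangle}-1$ to first order, identify the coefficient of $\xi_j$ with \eqref{2.21_new}, and estimate $a_2^{(j)}$ through $\int_{|\z|>1}|\z|\,b_2(\z)\,d\z=\omega_d/(\alpha-1)$ (the paper phrases this as a Schur test for the kernel $\wt b_{2,j}$, you phrase it via $|u(\x)\overline{u(\y)}|\le\tfrac12(|u(\x)|^2+|u(\y)|^2)$ and folding; these are the same computation). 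The divergence is in the remainder: the paper uses the log-weighted expansion \eqref{2.26_new}--\eqref{2.27_new}, writes $\wt a_2(\bxi)$ as in \eqref{2.30_new}, and runs a Schur-test estimate \eqref{2.32b_new}--\eqref{2.33_new} that keeps the directional quantity $|\langle\bxi,\z\rangle|^\alpha$ and therefore needs the factor $(1+|\ln|\langle\bxi,\z\rangle||)^{-p}$ to restore integrability of $|\z|^{-d}$ at infinity, which is then handled by a cylinder decomposition and changes of variables. You instead bound the remainder of the exponential pointwise by $\min\{\tfrac12\lambda^2,\,2|\lambda|\}$, give up the directional structure via $|\langle\bxi,\z\rangle|\le|\bxi||\z|$, and split the radial integral at $r=\max\{1,|\bxi|^{-1}\}$; the interpolation between the quadratic bound (convergent since $\alpha<2$) and the linear bound (convergent since $\alpha>1$) produces exactly the exponent $\alpha$, with $\wt c_2=\omega_d\bigl(\tfrac{1}{2(2-\alpha)}+\tfrac{2}{\alpha-1}\bigr)$, and your endpoint computations (including the case $|\bxi|\ge1$, where $|\bxi|\le|\bxi|^\alpha$) are all correct. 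What each approach buys: yours is shorter, entirely elementary, and makes transparent why the exponent $\alpha$ and the blow-up of the constant at both $\alpha\to1$ and $\alpha\to2$ arise from the tail of $b_2$ alone; the paper's log-weighted machinery is heavier here but is the same toolkit it must use later (e.g.\ in Lemma \ref{lem3.10}, where the extra cancellation from the zero-mean function $\mu_*$ is needed and a crude bound of the type $|\langle\bxi,\z\rangle|\le|\bxi||\z|$ inside a min would not by itself yield the $|\bxi|^{1+\alpha}$ rate), so the paper gains uniformity of technique rather than necessity for this particular lemma. One cosmetic remark: the parenthetical appeal to $\operatorname{Re}(-i|u(\y)|^2)=0$ is not needed in your derivation, since after subtracting $a_2(\mathbf 0)$ the linear term already appears in the form \eqref{2.21_new}; this does not affect correctness.
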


\begin{proof}[Proof]
By \eqref{2.24_new},
 \begin{equation}
 \label{2.25}
 \begin{aligned}
& a_2(\bxi) [u,u] - a_2(\mathbf{0}) [u,u] = - \operatorname{Re}\, ((K(\bxi) - K(\mathbf{0}))u,u)_{L_2(\Omega)}
 \\
 &= - \operatorname{Re} \sum_{\n \in \Z^d} \int_{\Omega} d\y \int_{\Omega} d\x\, \mu(\x,\y) b_2(\x-\y + \n)
 ( e^{i \langle \bxi, \x - \y + \n\rangle}- 1) u(\x) \overline{u(\y)}.
 \end{aligned}
 \end{equation}
We also use an elementary expansion
 \begin{equation}
 \label{2.26_new}
e^{i\lambda} = 1 + i \lambda + \frac{|\lambda|^\alpha F_p(\lambda)}{(1 + |\operatorname{ln} |\lambda || )^p}, \quad \lambda \in \R, \quad p>1,
\end{equation}
where $F_p \in L_\infty(\R)$ and  satisfies the uniform in  $\lambda$ estimate
 \begin{equation}
 \label{2.27_new}
 | F_p(\lambda)| \le \max \left\{ \sup_{0< x \le 1} \frac{x^{2-\alpha}}{2} (1 + |\operatorname{ln} x | )^p ,
 \sup_{1 \le  x  < \infty} \frac{(2 + x)}{x^\alpha} (1 + |\operatorname{ln} x | )^p
    \right\} =: \mathfrak{c}(p,\alpha), \quad \lambda \in \R.
\end{equation}
Combining \eqref{2.25} and  \eqref{2.26_new} one has
 \begin{equation}
 \label{2.28_new}
  a_2(\bxi) [u,u] - a_2(\mathbf{0}) [u,u] =
  \sum_{j=1}^d \xi_j a^{(j)}_2[u,u] + \wt{a}_2(\bxi) [u,u]
 \end{equation}
 with
 \begin{equation}
 \label{2.29_new}
 \begin{aligned}
 a^{(j)}_2[u,u] &= - \sum_{\n \in \Z^d} \int_{\Omega} d\y \int_{\Omega} d\x \, \mu(\x,\y)b_2(\x-\y + \n)
 (x_j - y_j + n_j)  \operatorname{Re} (i  u(\x) \overline{u(\y)})
 \\
 &=- \int_{\R^d} d\y \int_{\Omega} d\x\, \mu(\x,\y) b_2(\x - \y) (x_j-y_j)
 \operatorname{Re} \left( i u(\x) \overline{u(\y)} \right),
 \end{aligned}
 \end{equation}
 \begin{equation}
 \label{2.30_new}
 \begin{aligned}
 \wt{a}_2(\bxi)[u,u] &= - \sum_{\n \in \Z^d} \int_{\Omega} d\y \int_{\Omega} d\x \, \mu(\x,\y) b_2(\x-\y + \n)
\frac{ |\langle \bxi, \x - \y + \n\rangle|^\alpha }
{(1 + |\operatorname{ln} |\langle \bxi, \x - \y + \n\rangle| | )^p}
 \\
 &\qquad \qquad \qquad \times \operatorname{Re} \bigl(  {F_p (\langle \bxi, \x - \y + \n\rangle)} u(\x) \overline{u(\y)} \bigr).
 \end{aligned}
 \end{equation}
Now representations \eqref{2.20_new} and  \eqref{2.21_new} can be deduced from  \eqref{2.28_new}, \eqref{2.29_new}.

In order to estimate the form in \eqref{2.29_new} we rewrite it as follows:
 \begin{equation}
 \label{2.31_new}
 a^{(j)}_2[u,u] = - \operatorname{Re}\,( i(K_{2,j} u,u)_{L_2(\Omega)} ),
 \end{equation}
 where
 $$
 (K_{2,j} u)(\y) = \int_\Omega \mu(\x,\y) \wt{b}_{2,j}(\x -\y) u(\x) \,d\x,
 $$
 $$
  \wt{b}_{2,j}(\z) :=  \sum_{\n \in \Z^d}  b_2(\z + \n) (z_j + n_j).
 $$
 According to the Shur test, the integral operator $K_{2,j}$ is bounded, and
(cf. \eqref{2.23b_new}--\eqref{2.23d_new})
\begin{equation}
\label{2.31b_new}
\| K_{2,j} \|_{L_2(\Omega) \to L_2(\Omega)}^2 \le
\left( \sup_{\y \in \Omega}\! \int_\Omega\! \mu(\x,\y) |\wt{b}_{2,j}(\x - \y)| \,d\x\! \right)
\left( \sup_{\x \in \Omega}\! \int_\Omega\! \mu(\x,\y) |\wt{b}_{2,j}(\x - \y)| \,d\y\! \right)\!.
\end{equation}
Considering the estimate
$$
 \begin{aligned}
 \sup_{\y \in \Omega} \int_\Omega  \mu(\x,\y) \bigl| \wt{b}_{2,j}(\x -\y) \bigr|\,d\x \le
  \sup_{\y \in \Omega} \mu_+ \sum_{\n \in \Z^d} \int_\Omega
 b_2(\x - \y+\n) |x_j -y_j + n_j|  \,d\x
 \\
 = \mu_+ \intop_{\R^d}  b_2(\z) |z_j| \,d\z \le \mu_+ \int_{|\z| >1} \frac{d\z}{|\z|^{d+\alpha -1}}
 = \mu_+ \omega_d \int_1^\infty \frac{dr}{r^\alpha} = \mu_+ \frac{\omega_d}{\alpha -1}
 \end{aligned}
 $$
 and a similar estimate for the second integral on the right-hand side of \eqref{2.31b_new}, we conclude that
 estimate \eqref{a2j_le} holds with the constant $c_2(d,\alpha) = \frac{\omega_d}{\alpha -1}$.

We proceed with estimating the form   \eqref{2.30_new}, which can be written as
 \begin{equation}
 \label{2.32_new}
 \wt{a}_2(\bxi)[u,u] = - \operatorname{Re}\, (\wt{K}_{2}(\bxi) u,u)_{L_2(\Omega)}
 \end{equation}
 with
 $$
 (\wt{K}_{2}(\bxi) u)(\y) = \int_\Omega \mu(\x,\y) T(\bxi, \x -\y) u(\x)\,d\x,
 $$
 and
 $$
  T(\bxi, \z) :=  \sum_{\n \in \Z^d}  b_2(\z + \n) \frac{ |\langle \bxi, \z + \n\rangle|^\alpha {F_p(\langle \bxi, \z+\n \rangle)}}
{(1 + |\operatorname{ln} |\langle \bxi, \z + \n\rangle| | )^p}.
 $$
 The norm of the integral operator  $\wt{K}_{2}(\bxi)$ can be estimated with the help of the Shur test:
\begin{equation}
\label{2.32b_new}
\| \wt{K}_{2}(\bxi)  \|_{L_2(\Omega) \to L_2(\Omega)}^2 \le
\left( \sup_{\y \in \Omega} \int_\Omega \mu(\x,\y) | T(\bxi, \x - \y)| \,d\x \right)
\left( \sup_{\x \in \Omega} \int_\Omega \mu(\x,\y)  | T(\bxi, \x - \y)| \,d\y \right).
\end{equation}
By  \eqref{2.27_new},
 \begin{equation}
 \label{2.33_new}
\begin{aligned}
& \sup_{\y \in \Omega} \int_\Omega \mu(\x,\y) |T(\bxi, \x -\y)| \,d\x \le \mathfrak{c}(p,\alpha) \mu_+ \int_{\R^d}  b_2(\z) \frac{ |\langle \bxi, \z \rangle|^\alpha } {(1 + |\operatorname{ln} |\langle \bxi, \z \rangle| | )^p}\,d\z
\\
 & = \mathfrak{c}(p,\alpha)
\mu_+ |\bxi|^{\alpha}\!\!\! \intop_{|\z| >1}\!\! \frac{|z_1|^\alpha d\z}{|\z|^{d+\alpha} (1 + |\operatorname{ln} (|\bxi| |z_1|) |  )^p}
\le  \mathfrak{c}(p,\alpha)
\mu_+ |\bxi|^{\alpha}\!\!\!\! \intop_{|\z| >1}\!\!\! \frac{ d\z}{|\z|^{d} (1 + |\operatorname{ln} (|\bxi| |z_1|) |  )^p}.
\end{aligned}
 \end{equation}
 Here, for $\bxi \ne \mathbf{0}$, the new coordinate system is chosen in such a way that its first axis is directed along $\bxi$.
 Then  $\langle \bxi, \z \rangle = |\bxi| z_1$.
In order to estimate the last integral in  \eqref{2.33_new}, we let $\z=(z_1,\z'), \z' =(z_2,\dots,z_n)$, and
observe that the cylinder
$$
\left\{\z\in \R^d:\  |z_1| < \frac{1}{\sqrt{2}}, \ |\z'| < \frac{1}{\sqrt{2}} \right\}
$$
is contained in the ball $B_1(\mathbf{0})$ and, therefore, the integral over the complement to the ball is bounded from above by that over the complement to the cylinder. This yields
$$
\begin{aligned}
\intop_{|\z| >1} \frac{ d\z}{|\z|^{d} (1 + |\operatorname{ln} (|\bxi| |z_1|) |  )^p} \le
\intop_{|z_1| < \frac{1}{\sqrt{2}} } dz_1 \intop_{|\z'|> \frac{1}{\sqrt{2}}} \frac{d\z'}{|\z|^{d} (1 + |\operatorname{ln} (|\bxi| |z_1|) |  )^p}
\\
+ \intop_{|z_1| > \frac{1}{\sqrt{2}} } dz_1 \intop_{\R^{d-1}} \frac{d\z'}{|\z|^{d} (1 + |\operatorname{ln} (|\bxi| |z_1|) |  )^p}.
\end{aligned}
$$
The first integral on the right-hand side does not exceed the quantity
$$
 \sqrt{2} \intop_{|\z'| > \frac{1}{\sqrt{2}}} \frac{ d\z'}{|\z'|^{d}}  =  \sqrt{2}\, \omega_{d-1}
 \intop_{\frac{1}{\sqrt{2}}}^\infty \frac{ dr}{r^{2}} = 2 \omega_{d-1}.
$$
Let us estimate the second one:
$$
\begin{aligned}
& \intop_{|z_1| > \frac{1}{\sqrt{2}} } dz_1 \intop_{\R^{d-1}} \frac{d\z'}{|\z|^{d} (1 + |\operatorname{ln} (|\bxi| |z_1|) |  )^p}
 =  \intop_{|z_1| > \frac{1}{\sqrt{2}} } \frac{dz_1}{(1 + |\operatorname{ln} (|\bxi| |z_1|) |  )^p}
 \intop_{\R^{d-1}} \frac{d\z'}{(z_1^2 + |\z'|^2 )^{d/2} }
 \\
 &=\!
2 \! \intop_{\frac{1}{\sqrt{2}} }^\infty\! \frac{dz_1}{z_1 (1 + |\operatorname{ln} (|\bxi| z_1) |  )^p}
 \intop_{\R^{d-1}}\!\! \frac{d\w'}{(1 + |\w'|^2 )^{\frac d2} }\le 2{\mathfrak c}'_d \intop_{0 }^\infty \frac{d\tau}{\tau
 (1+  |\operatorname{ln} \tau |)^p} = 2{\mathfrak c}'_d \intop_{\R }\! \frac{d t}{(1 + |t|)^p} = \frac{4{\mathfrak c}'_d}{p-1}.
 \end{aligned}
$$
Here we performed the change of variables $\z' = |z_1| \w'$, then  $\tau = |\bxi| z_1$ and $t = \operatorname{ln} \tau$;
also we used the notation
$$
{\mathfrak c}'_d =  \int_{\R^{d-1}} \frac{d\w'}{(1 + |\w'|^2 )^{d/2} }.
$$
Finally, we have
$$
\int_{|\z| >1} \frac{ d\z}{|\z|^{d} (1 + |\operatorname{ln} (|\bxi| |z_1|) |  )^p} \le
2 \omega_{d-1} + \frac{4{\mathfrak c}'_d}{p-1}
$$
and, due to  \eqref{2.33_new},
 \begin{equation*}
\sup_{\y \in \Omega} \int_\Omega \mu(\x,\y) |T(\bxi, \x -\y)|  \,d\x \le \hat{c}_2(p,d,\alpha) \mu_+ |\bxi|^{\alpha},\quad
\hat{c}_2(p,d,\alpha) = {\mathfrak c}(p,\alpha)  \Bigl(2 \omega_{d-1} + \frac{4{\mathfrak c}'_d}{p-1}\Bigr).
\end{equation*}
The second term on the right-hand side of  \eqref{2.32b_new} admits a similar estimate. Consequently,
the norm  $\| \wt{K}_2(\bxi)\|$ is estimated from above by the quantity $\hat{c}_2(p,d,\alpha) \mu_+ |\bxi|^{\alpha}$,
and, according to  \eqref{2.32_new},
$$
\wt{a}_2(\bxi)[u,u] \le  \hat{c}_2(p,d,\alpha) \mu_+ |\bxi|^{\alpha} \| u \|^2_{L_2(\Omega)}, \quad u \in L_2(\Omega).
$$
Letting $p=2$ in \eqref{2.26_new}, we derive from the last inequality the desired estimate \eqref{a2tilde}
with a constant $\wt{c}_2(d,\alpha) = \hat{c}_2(2,d,\alpha)$.
\end{proof}

\begin{remark}
\label{rem2.6}
Notice that  ${c}_2(d,\alpha)\to \infty$ as $\alpha \to 1$.
\end{remark}

\subsection{Representation for the quadratic form $a(\bxi)$}
Combining  \eqref{a=a1+a2} and Lemmata \ref{lem2.2_new}, \ref{lem2.3_new}, we obtain the following statement:

\begin{lemma}
\label{lem2.4_new}
Let conditions \eqref{e1.1}, \eqref{e1.2} be satisfied,  and assume that $1< \alpha <2$.
Then the form $a(\bxi)$ defined in \eqref{e1.9} admits the representation
\begin{equation*}
a(\bxi) [u,u] =  a(\mathbf{0}) [u,u] + \sum_{j=1}^d \xi_j a^{(j)}[u,u] +  \wt{a}(\bxi) [u,u],
\quad u\in \wt{H}^\gamma(\Omega), \quad \bxi \in \wt{\Omega}.
\end{equation*}
Here
\begin{equation}
\label{2.36_new}
a^{(j)} [u,u] := - \int_{\R^d} d\y \int_{\Omega} d\x\, \mu(\x,\y)  \frac{(x_j-y_j)}{|\x-\y|^{d+\alpha}}
 \operatorname{Re} \left( i u(\x) \overline{u(\y)} \right) ,\quad u\in \wt{H}^\gamma(\Omega).
\end{equation}
Moreover, the following estimates hold:
 \begin{align}
 \label{aj_le}
\bigl| a^{(j)} [u,u] \bigr| &\le c_3(d,\alpha)  \left( a(\mathbf{0}) [u,u] + \mu_+ \|u\|^2_{L_2(\Omega)}\right), \quad u\in \wt{H}^\gamma(\Omega), \quad j=1,\dots,d,
\\
\label{atilde}
\left| \wt{a}(\bxi) [u,u] \right| &\le \wt{c}_3(d,\alpha) |\bxi |^\alpha \left( a(\mathbf{0}) [u,u] + \mu_+ \|u\|^2_{L_2(\Omega)}\right), \quad u\in \wt{H}^\gamma(\Omega), \quad  \bxi \in \wt{\Omega}.
\end{align}
\end{lemma}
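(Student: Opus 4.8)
The plan is to simply add the two representations obtained in Lemmata \ref{lem2.2_new} and \ref{lem2.3_new}, using the splitting \eqref{a=a1+a2}. First I would write, for $u\in \wt{H}^\gamma(\Omega)$ and $\bxi\in\wt{\Omega}$,
\begin{equation*}
a(\bxi)[u,u] = a_1(\bxi)[u,u] + a_2(\bxi)[u,u],
\end{equation*}
and substitute \eqref{2.5_new} and \eqref{2.20_new}. Collecting the zero-order terms gives $a_1(\mathbf{0})[u,u] + a_2(\mathbf{0})[u,u] = a(\mathbf{0})[u,u]$, which is \eqref{a=a1+a2} at $\bxi=\mathbf{0}$; collecting the terms linear in $\bxi$ suggests the definition $a^{(j)}[u,u] := a_1^{(j)}[u,u] + a_2^{(j)}[u,u]$, and collecting the remainders gives $\wt{a}(\bxi)[u,u] := \wt{a}_1(\bxi)[u,u] + \wt{a}_2(\bxi)[u,u]$.

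Next I would check that this $a^{(j)}$ is indeed the one displayed in \eqref{2.36_new}. By \eqref{2.6_new} and \eqref{2.21_new} the kernels of $a_1^{(j)}$ and $a_2^{(j)}$ are $\mu(\x,\y) b_1(\x-\y)(x_j-y_j)$ and $\mu(\x,\y) b_2(\x-\y)(x_j-y_j)$ respectively; since $b_1(\z)+b_2(\z) = |\z|^{-d-\alpha}$ for all $\z\in\R^d$, their sum has kernel $\mu(\x,\y)(x_j-y_j)|\x-\y|^{-d-\alpha}$, which is exactly \eqref{2.36_new}. One should keep in mind that $a_2$, hence the second-moment part of $a^{(j)}$, extends to all of $L_2(\Omega)$, so the representation is valid on the form domain $\wt{H}^\gamma(\Omega)$.

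It remains to obtain the two estimates. For \eqref{aj_le} I would use the triangle inequality together with \eqref{a1j_le} and \eqref{a2j_le}:
\begin{equation*}
\bigl| a^{(j)}[u,u] \bigr| \le \bigl| a_1^{(j)}[u,u] \bigr| + \bigl| a_2^{(j)}[u,u] \bigr| \le c_1(d,\alpha)\bigl( a_1(\mathbf{0})[u,u] + \mu_+\|u\|^2_{L_2(\Omega)} \bigr) + c_2(d,\alpha)\mu_+\|u\|^2_{L_2(\Omega)}.
\end{equation*}
Since $a_1(\mathbf{0})$ and $a_2(\mathbf{0})$ are non-negative (being manifest sums of squares, see \eqref{a1=}, \eqref{a2=}), $a_1(\mathbf{0})[u,u] \le a(\mathbf{0})[u,u]$, and \eqref{aj_le} follows with $c_3(d,\alpha) = c_1(d,\alpha) + c_2(d,\alpha)$. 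For \eqref{atilde} I would similarly combine \eqref{a1tilde} and \eqref{a2tilde}:
\begin{equation*}
\bigl| \wt{a}(\bxi)[u,u] \bigr| \le \wt{c}_1(d,\alpha)|\bxi|^2 \bigl( a_1(\mathbf{0})[u,u] + \mu_+\|u\|^2_{L_2(\Omega)} \bigr) + \wt{c}_2(d,\alpha)|\bxi|^\alpha \mu_+\|u\|^2_{L_2(\Omega)}.
\end{equation*}

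The only point worth attention — and the closest thing to an obstacle, though a minor one — is that the $a_1$-remainder carries a factor $|\bxi|^2$ while the statement wants $|\bxi|^\alpha$. This is harmless because $2-\alpha>0$ and $|\bxi|\le\pi\sqrt{d}$ on $\wt{\Omega}$, so $|\bxi|^2 \le (\pi\sqrt{d})^{2-\alpha}|\bxi|^\alpha$. Using this and again $a_1(\mathbf{0})[u,u]\le a(\mathbf{0})[u,u]$, we obtain \eqref{atilde} with $\wt{c}_3(d,\alpha) = (\pi\sqrt{d})^{2-\alpha}\wt{c}_1(d,\alpha) + \wt{c}_2(d,\alpha)$. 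All the constants blow up as $\alpha\to1$ or $\alpha\to2$, consistently with Remarks \ref{rem2.4} and \ref{rem2.6}, which is expected since they are built from the constants of Lemmata \ref{lem2.2_new} and \ref{lem2.3_new}.
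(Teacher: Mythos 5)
Your proposal is correct and matches the paper's argument: the lemma is obtained exactly by adding the representations of Lemmata \ref{lem2.2_new} and \ref{lem2.3_new} via the splitting \eqref{a=a1+a2}, and your constants $c_3 = c_1 + c_2$, $\wt{c}_3 = (\pi\sqrt{d})^{2-\alpha}\wt{c}_1 + \wt{c}_2$ coincide with those stated in the paper's remark following the lemma.
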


\begin{remark}
The constants ${c}_3(d,\alpha)$, $\wt{c}_3(d,\alpha)$ can be expressed in terms of the constants from Lemmata
\emph{\ref{lem2.2_new} and \ref{lem2.3_new}:}   $c_3(d,\alpha) = c_1(d,\alpha) + c_2(d,\alpha)$,
$\wt{c}_3(d,\alpha) = \wt{c}_1(d,\alpha) (\pi \sqrt{d})^{2-\alpha} + \wt{c}_2(d,\alpha)$. According to Remarks
\emph{\ref{rem2.4} and \ref{rem2.6}},
$c_3(d,\alpha) \to \infty$ as $\alpha \to 1$ and as $\alpha \to 2$, while  $\wt{c}_3(d,\alpha) \to \infty$ as $\alpha \to 2$.
\end{remark}

\section{Threshold characteristics of L\'evy-type operators near the lower edge of the spectrum.}

\subsection{The edge of the spectrum of operator $\A(\bxi;\alpha,\mu)$}
Denote by  $\lambda_j(\bxi)$, $j \in \N$, the eigenvalues of the operator $\A(\bxi)$ enumerated in non-decreasing order
taking into account the multiplicities.
From \eqref{e1.10} and the variational principle for the  eigenvalues of $\A(\bxi)$ we have
\begin{equation}
\label{e1.20}
  \mu_- \lambda_j^0(\bxi) \le \lambda_j(\bxi) \le \mu_+ \lambda_j^0(\bxi),
 \quad  j \in \N, \quad  \bxi\in\widetilde\Omega.
 \end{equation}
Due to  diagonalization, the eigenpairs of the operator  $\A_0(\bxi)$ can be determined explicitly: the eigenvalues
are given by $c_0(d,\alpha) |2\pi \n + \bxi|^\alpha$, $\n \in \Z^d$, and the corresponding eigenfunctions are
$e^{2\pi i \langle \n, \x \rangle}$. The formula for the principal eigenvalue reads
\begin{equation}
\label{e1.21}
  \lambda_1^0(\bxi) = c_0(d,\alpha) |\bxi|^\alpha, \quad \bxi \in \wt{\Omega},
 \end{equation}
 and  $\mathbf{1}_{\Omega}$ is the corresponding eigenfunction.
Since
   $$
   |\bxi| < |2\pi \n + \bxi|, \quad \bxi \in \operatorname{Int} \wt{\Omega} = (-\pi,\pi)^d, \ \ \n \in \Z^d \setminus \mathbf{0},
   $$
then, for $\bxi \in (-\pi,\pi)^d$, the first eigenvalue of the operator $\A_0(\bxi)$ is simple, and the corresponding
eigenspace coincides with   $\mathcal{L}\{\mathbf{1}_{\Omega}\}$. By \eqref{1.18b}, \eqref{1.18c} the following
relations hold:
\begin{align}
\label{e1.22}
  \lambda_2^0(\bxi) &=  c_0(d,\alpha) \min_{\n \in \Z^d \setminus \mathbf{0}}
   |2\pi \n + \bxi|^\alpha \ge c_0(d,\alpha) \pi^\alpha, \quad \bxi \in \wt{\Omega},
   \\
   \label{e1.22a}
   \lambda_2^0(\mathbf{0}) &=  c_0(d,\alpha) (2\pi)^\alpha.
  \end{align}
As a consequence of  \eqref{e1.20}--\eqref{e1.22a} we have
\begin{align}
\label{e1.23}
 \mu_- c_0(d,\alpha) |\bxi|^\alpha \le \,&\lambda_1(\bxi) \le \mu_+ c_0(d,\alpha) |\bxi|^\alpha,
 \quad   \bxi\in\widetilde\Omega,
\\
\label{e1.24}
  &\lambda_2(\bxi) \ge \mu_- c_0(d,\alpha) \pi^\alpha =: d_0,
 \quad   \bxi\in\widetilde\Omega,
 \\
\label{e1.24a}
  &\lambda_2(\mathbf{0}) \ge \mu_- c_0(d,\alpha) (2\pi)^\alpha = 2^\alpha d_0.
\end{align}
By Lemma  \ref{lem1.4}, the lower edge of the spectrum of the operator $\A(\mathbf{0};\alpha,\mu)$ consists of
a simple isolated eigenvalue $\lambda_1(\mathbf{0}) =0$, and the corresponding eigenspace is $\mathcal{L}\{\mathbf{1}_{\Omega}\}$. Furthermore, by virtue of  \eqref{e1.24a}, the distance from
the point  $\lambda_1(\mathbf{0}) =0$ to the rest of the spectrum of the operator   $\A(\mathbf{0};\alpha,\mu)$
is not less than $2^\alpha d_0$.
Denote
\begin{equation}
\label{delta0}
\delta_{0}(\alpha,\mu):=  \pi \Big( \frac{\mu_{-}}{3 \mu_{+}}\Big)^{1/\alpha}.
\end{equation}
Since $\mu_{-}\leqslant \mu_+$, then $\delta_{0}(\alpha,\mu) < \pi$ and thus the ball $|\bxi| \le \delta_{0}(\alpha,\mu)$
is a subset of  $\widetilde\Omega$.
It follows from inequalities \eqref{e1.23}, \eqref{e1.24} that for  $|\bxi|\le \delta_0(\alpha, \mu)$
the first eigenvalue of the operator $\A(\bxi;\alpha,\mu)$ belongs to the interval $[0,d_0/3]$,
while the remaining part of the spectrum is situated on the semi-axis $[d_0,\infty)$.
Recalling that $d_0 := \mu_- c_0(d,\alpha) \pi^\alpha$ and $\delta_0(\alpha,\mu)$ is defined in \eqref{delta0},
 we arrive at the following statement:

\begin{proposition}
\label{prop2.1}
Let conditions  \eqref{e1.1}, \eqref{e1.2} be fulfilled, and assume that $1< \alpha < 2$.
Then, for  $|\bxi|\le\delta_{0}(\alpha,\mu)$ the spectrum of the operator
$\A(\bxi) = \A(\bxi;\alpha,\mu)$ on the segment  $[0,d_{0}/3]$ consists of one simple eigenvalue,
while the interval  $(d_{0}/3,d_{0})$ has no  common points with the spectrum of
 $\A(\bxi;\alpha,\mu)$.
\end{proposition}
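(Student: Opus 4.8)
The plan is to read the statement off directly from the spectral bounds \eqref{e1.23}, \eqref{e1.24} together with the choice of $\delta_0(\alpha,\mu)$ made in \eqref{delta0}; no new analysis is needed beyond what is already in Section~1.

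First I would record the elementary identity behind the definition \eqref{delta0}. Since $\delta_0(\alpha,\mu)=\pi\bigl(\mu_-/3\mu_+\bigr)^{1/\alpha}$,
\begin{equation*}
\mu_+ c_0(d,\alpha)\,\delta_0(\alpha,\mu)^\alpha
=\mu_+ c_0(d,\alpha)\,\pi^\alpha\cdot\frac{\mu_-}{3\mu_+}
=\frac{\mu_- c_0(d,\alpha)\,\pi^\alpha}{3}=\frac{d_0}{3}.
\end{equation*}
Hence for every $\bxi$ with $|\bxi|\le\delta_0(\alpha,\mu)$ the upper bound in \eqref{e1.23} gives $\lambda_1(\bxi)\le\mu_+ c_0(d,\alpha)|\bxi|^\alpha\le d_0/3$, so that $\lambda_1(\bxi)\in[0,d_0/3]$.

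Next I would invoke \eqref{e1.24}, which yields $\lambda_j(\bxi)\ge\lambda_2(\bxi)\ge d_0$ for all $j\ge 2$ and all $\bxi\in\wt\Omega$; in particular no eigenvalue of index $\ge 2$ lies below $d_0$. Combining the two facts and using that $c_0(d,\alpha)>0$ for $1<\alpha<2$ by \eqref{e1.6a} (so that $d_0>0$), we get the strict chain
\begin{equation*}
\lambda_1(\bxi)\le\frac{d_0}{3}<d_0\le\lambda_2(\bxi),\qquad |\bxi|\le\delta_0(\alpha,\mu).
\end{equation*}
Since the spectrum of $\A(\bxi)$ is discrete for each $\bxi\in\wt\Omega$ (compactness of the embedding of $\wt H^\gamma(\Omega)$ into $L_2(\Omega)$) and the $\lambda_j(\bxi)$ are enumerated with multiplicities, it follows at once that on $[0,d_0/3]$ the operator $\A(\bxi)$ has the single eigenvalue $\lambda_1(\bxi)$, which is simple because it is strictly separated from $\lambda_2(\bxi)$, while the open interval $(d_0/3,d_0)$ sits strictly between $\lambda_1(\bxi)$ and $\lambda_2(\bxi)$ and therefore is disjoint from the spectrum of $\A(\bxi)$.

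There is no genuine obstacle here: the only non-routine input, namely the two-sided bound \eqref{e1.23} on the principal eigenvalue together with the uniform lower bound \eqref{e1.24} on $\lambda_2(\bxi)$, has already been established from \eqref{e1.10}, \eqref{e1.17}, \eqref{e1.18} and the explicit spectrum of $\A_0(\bxi)$. The single point deserving care is the simplicity assertion: it rests on the strict inequality $d_0/3<d_0$, i.e.\ on $d_0>0$, which in turn uses $c_0(d,\alpha)>0$ from \eqref{e1.6a}; without this one could only claim that $[0,d_0/3]$ carries no eigenvalue of index $\ge 2$, not that the eigenvalue present there is simple.
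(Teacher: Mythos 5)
Your proof is correct and follows essentially the same route as the paper: the identity $\mu_+ c_0(d,\alpha)\,\delta_0^\alpha = d_0/3$ combined with the upper bound in \eqref{e1.23} places $\lambda_1(\bxi)$ in $[0,d_0/3]$, while \eqref{e1.24} keeps all higher eigenvalues in $[d_0,\infty)$, which is exactly how the paper deduces Proposition \ref{prop2.1} from \eqref{e1.23}, \eqref{e1.24} and the choice \eqref{delta0}. Your additional remark that simplicity rests on $d_0>0$ (hence $c_0(d,\alpha)>0$) is a harmless elaboration of what the paper leaves implicit.
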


\subsection{Contour $\Gamma$. Resolvent identity}
Denote by $F(\bxi)$ the spectral projector of the operator  $\A(\bxi;\alpha,\mu)$ corresponding  to the segment $[0,d_{0}/3]$, and let  $\Gamma$ be a contour on the complex plane that passes through the midpoint of the interval
 $(d_{0}/3,d_{0})$ and encloses the segment  $[0,d_{0}/3]$ equidistantly.
The length of the contour $\Gamma$ can be easily calculated:
\begin{equation*}
l_\Gamma = \frac{d_0 (2\pi+2)}{3}.
\end{equation*}
Due to Proposition \ref{prop2.1} and by the Riesz formula, we have
\begin{align}
\label{e2.6}
F(\bxi)  &= - \frac{1}{2\pi i}\oint_{\Gamma}(\A(\bxi)-\zeta I)^{-1}\, d\zeta,\ \ |\bxi|\le\delta_{0}(\alpha,\mu),
\\
\label{e2.6a}
\A(\bxi) F(\bxi) &= - \frac{1}{2\pi i}\oint_{\Gamma}(\A(\bxi)-\zeta I)^{-1}\zeta\,d\zeta,\ \ |\bxi|\le\delta_{0}(\alpha,\mu);
\end{align}
here integration along the contour is performed counterclockwise.
Our next goal is to construct approximations to the operators $F(\bxi)$ and $\A(\bxi)F(\bxi)$ for  $|\bxi|\le\delta_{0}(\alpha,\mu)$. To this end we use
relations \eqref{e2.6}, \eqref{e2.6a} and a proper version of the resolvent identity for the difference of the resolvents
of  the operators  $\A(\bxi)$ and $\A(\mathbf{0})$.
The usual resolvent identity is inapplicable because the difference $\A(\bxi) - \A(\mathbf{0})$ need not be well-defined.
Instead, we use the resolvent identity for operators generated by closed non-negative quadratic forms with a common domain,
see \cite[Ch. 1, \S 2]{BSu1}.
Letting
\begin{align*}
R(\bxi,\zeta)&:=(\A(\bxi)-\zeta I)^{-1},\ \
|\bxi|\le\delta_{0}(\alpha,\mu),\ \ \zeta\in\Gamma;
\\
R_{0}(\zeta)&:=R(\mathbf{0},\zeta),\ \ \zeta\in\Gamma,
\end{align*}
we  notice that, by Proposition \ref{prop2.1}, both resolvents $R(\bxi,\zeta)$ and $R_{0}(\zeta)$ are well defined on the contour $\Gamma$ and admit the estimates
\begin{equation}\label{e2.32}
\| R(\bxi,\zeta)\|_{L_2(\Omega)\to L_2(\Omega)}\le 3d_{0}^{-1},\ \
\| R_0 (\zeta)\|_{L_2(\Omega)\to L_2(\Omega)} \le 3d_{0}^{-1}, \ \ |\bxi|\le\delta_{0}(\alpha,\mu),\quad \zeta \in \Gamma.
\end{equation}
Denote by $\mathfrak{D}$ the Hilbert space $\Dom a(\mathbf{0}) = \wt{H}^\gamma(\Omega)$ equipped with the inner product
\begin{equation*}
\label{inner}
(u,v)_{\mathfrak D} :=   a(\mathbf{0})[u,v] + \mu_+ (u,v)_{L_2(\Omega)},\quad u,v \in \wt{H}^\gamma(\Omega).
\end{equation*}
As a consequence of this definition we have
\begin{equation}
\label{L2.6_1}
\|u\|_{L_2(\Omega)} \le \mu_+^{-1/2}\| u \|_{\mathfrak D},\quad u \in \mathfrak{D}.
\end{equation}
The form $a(\bxi) - a(\mathbf{0})$ is continuous in  $\mathfrak D$ and thus generates a bounded self-adjoint operator ${\mathbb T}(\bxi)$ in  $\mathfrak D$. Therefore,
\begin{align}
\label{L2.6_2}
a(\bxi)[u,v] - a(\mathbf{0})[u,v] = ( {\mathbb T}(\bxi) u, v)_{\mathfrak D}, \quad u,v \in {\mathfrak D},
\\
\nonumber
\|{\mathbb T}(\bxi)\|_{{\mathfrak D} \to {\mathfrak D}} = \sup_{0 \ne u \in {\mathfrak D}}
\frac{| a(\bxi)[u,u] - a(\mathbf{0})[u,u] | }{\|u\|^2_{\mathfrak D}}.
\end{align}
By virtue of Lemma \ref{lem2.4} this yields
\begin{equation}
\label{L2.6_4}
\|{\mathbb T}(\bxi)\|_{{\mathfrak D} \to {\mathfrak D}} \le \check{c}(d,\alpha) |\bxi |, \quad \bxi \in \wt{\Omega}.
\end{equation}
Due to \eqref{L2.6_1} we also have
\begin{equation}
\label{L2.6_5}
\|{\mathbb T}(\bxi)\|_{{\mathfrak D} \to L_2(\Omega)} \le \mu_+^{-1/2} \check{c}(d,\alpha) |\bxi|,
\quad \bxi \in \wt{\Omega}.
\end{equation}
As was shown in  \cite[Ch. 1, \S 2]{BSu1} the following resolvent identity holds:
\begin{equation}
\label{L2.6_6}
R(\bxi,\zeta) - R_0(\zeta) = - \Upsilon(\zeta) {\mathbb T}(\bxi) R(\bxi,\zeta),
\end{equation}
where
\begin{equation}
\label{L2.6_7}
\Upsilon(\zeta) := I + (\zeta + \mu_+) R_0(\zeta).
\end{equation}
Taking into account  \eqref{e2.32}, \eqref{L2.6_7} and the fact that  $|\zeta| \le 2d_0/3$ for $\zeta \in \Gamma$, we conclude that
\begin{equation}
\label{L2.6_11}
\|  \Upsilon(\zeta) \|_{L_2(\Omega) \to L_2(\Omega)} \le 1 + |\zeta + \mu_+| \|R_0(\zeta)\|_{L_2(\Omega) \to L_2(\Omega)} \le 3 + 3 \mu_+ d_0^{-1}, \quad \zeta \in \Gamma.
\end{equation}
Later on we use the estimates proved in  \cite[(3.40), (3.46)]{JPSS24}. They read
\begin{align}
\label{R0_leq}
\| R_0(\zeta)\|_{L_2(\Omega) \to \mathfrak{D}} &\le  \mu_+^{-1/2} (3 + 3 \mu_+ d_0^{-1}),
\quad \zeta \in \Gamma,
\\
\label{R_leq}
\| R(\bxi,\zeta)\|_{L_2(\Omega) \to \mathfrak{D}} &\le \beta_0(d,\alpha) \mu_+^{-1/2} (3 + 3 \mu_+ d_0^{-1}),
\quad \zeta \in \Gamma, \quad  |\bxi| \le \delta_0(\alpha,\mu);
\end{align}
here
 \begin{equation*}
 \beta_0^2(d,\alpha) := \max \{2, 1+ 2 c_0(d,\alpha) \pi^\alpha d^{\alpha/2} \}.
\end{equation*}
Making use of the resolvent identity in \eqref{L2.6_6} and  the above estimates for the operators
$R(\bxi,\zeta)$, $R_0(\zeta)$, $\Upsilon(\zeta)$, ${\mathbb T}(\bxi)$, in the same way as in
\cite[(3.41), (3.47)]{JPSS24}  we obtain

\begin{lemma}[\cite{JPSS24}]
\label{lem3.1a}
For $|\bxi| \le \delta_0(\alpha,\mu)$ and $\zeta \in \Gamma$ the following inequalities hold:
\begin{align*}
\| R(\bxi,\zeta) - R_0(\zeta) \|_{L_2(\Omega) \to L_2(\Omega)} & \le C_1 |\bxi|,
\\
\| R(\bxi,\zeta) - R_0(\zeta) +   \Upsilon(\zeta)   {\mathbb T}(\bxi) R_0(\zeta) \|_{L_2(\Omega) \to L_2(\Omega)}
& \le C_2 |\bxi|^2,
\end{align*}
where the constants $C_1$, $C_2$ are given by the expressions
\begin{equation*}
\begin{aligned}
C_1 &= \check{c}(d,\alpha) \beta_0(d,\alpha)\mu_+^{-1}   (3 + 3 \mu_+ d_0^{-1})^2,
\\
C_2 &= \check{c}(d,\alpha)^2 \beta_0(d,\alpha)\mu_+^{-1}  (3 + 3 \mu_+ d_0^{-1})^2
\left( 1 + ( \mu_+ + 2d_0/3)\mu_+^{-1}(3 + 3 \mu_+ d_0^{-1})\right).
\end{aligned}
\end{equation*}
\end{lemma}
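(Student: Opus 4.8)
The plan is to obtain both inequalities from the form-resolvent identity \eqref{L2.6_6}, $R(\bxi,\zeta)-R_0(\zeta)=-\Upsilon(\zeta){\mathbb T}(\bxi)R(\bxi,\zeta)$, by iterating it and composing the operator bounds already at hand, uniformly in $\zeta\in\Gamma$ and $|\bxi|\le\delta_0(\alpha,\mu)$: namely \eqref{L2.6_11} for $\Upsilon(\zeta)$ in $L_2(\Omega)$, estimates \eqref{L2.6_4}--\eqref{L2.6_5} for ${\mathbb T}(\bxi)$ as an operator $\mathfrak{D}\to\mathfrak{D}$ and $\mathfrak{D}\to L_2(\Omega)$, and \eqref{R0_leq}--\eqref{R_leq} for $R_0(\zeta)$ and $R(\bxi,\zeta)$ as operators $L_2(\Omega)\to\mathfrak{D}$. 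The only point requiring attention is to read each factor between the correct pair of Hilbert spaces, since ${\mathbb T}(\bxi)$ naturally lives in $\mathfrak{D}$ rather than in $L_2(\Omega)$.

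For the first inequality I would simply estimate the right-hand side of \eqref{L2.6_6} factor by factor, reading from right to left: $\|R(\bxi,\zeta)\|_{L_2(\Omega)\to\mathfrak{D}}\le\beta_0(d,\alpha)\mu_+^{-1/2}(3+3\mu_+d_0^{-1})$ by \eqref{R_leq}; then $\|{\mathbb T}(\bxi)\|_{\mathfrak{D}\to L_2(\Omega)}\le\mu_+^{-1/2}\check c(d,\alpha)|\bxi|$ by \eqref{L2.6_5}; then $\|\Upsilon(\zeta)\|_{L_2(\Omega)\to L_2(\Omega)}\le3+3\mu_+d_0^{-1}$ by \eqref{L2.6_11}. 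The product of these three bounds is precisely $C_1|\bxi|$.

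For the second inequality, substituting $R(\bxi,\zeta)=R_0(\zeta)+\bigl(R(\bxi,\zeta)-R_0(\zeta)\bigr)$ on the right of \eqref{L2.6_6} yields
\begin{equation*}
R(\bxi,\zeta)-R_0(\zeta)+\Upsilon(\zeta){\mathbb T}(\bxi)R_0(\zeta)=-\Upsilon(\zeta){\mathbb T}(\bxi)\bigl(R(\bxi,\zeta)-R_0(\zeta)\bigr),
\end{equation*}
so it remains to bound the right-hand side by $C_2|\bxi|^2$. To this end I would first upgrade the first step to an $L_2(\Omega)\to\mathfrak{D}$ estimate: reading \eqref{L2.6_6} as a composition $L_2(\Omega)\to\mathfrak{D}\to\mathfrak{D}\to\mathfrak{D}$ gives $\|R(\bxi,\zeta)-R_0(\zeta)\|_{L_2(\Omega)\to\mathfrak{D}}\le\|\Upsilon(\zeta)\|_{\mathfrak{D}\to\mathfrak{D}}\,\|{\mathbb T}(\bxi)\|_{\mathfrak{D}\to\mathfrak{D}}\,\|R(\bxi,\zeta)\|_{L_2(\Omega)\to\mathfrak{D}}$, where the bound $\|\Upsilon(\zeta)\|_{\mathfrak{D}\to\mathfrak{D}}\le1+(\mu_++2d_0/3)\mu_+^{-1}(3+3\mu_+d_0^{-1})$ follows from \eqref{L2.6_7}, \eqref{L2.6_1}, \eqref{R0_leq} and the bound $|\zeta|\le2d_0/3$ on $\Gamma$, while the ${\mathbb T}(\bxi)$-factor supplies the power $|\bxi|$ via \eqref{L2.6_4}. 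Feeding this, together with \eqref{L2.6_5} and \eqref{L2.6_11}, into the displayed identity — now reading the composition as $L_2(\Omega)\to\mathfrak{D}\to L_2(\Omega)\to L_2(\Omega)$ — produces a second factor $|\bxi|$ and exactly the constant $C_2$.

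I do not expect a genuine obstacle: all the analytic content already resides in Lemma~\ref{lem2.4} and in the input bounds \eqref{L2.6_11}, \eqref{R0_leq}, \eqref{R_leq}, and what is left is the bookkeeping of multiplying constants along the correct chain of spaces. The one slightly delicate spot is recognising that the $\mathfrak{D}\to\mathfrak{D}$ norm of $\Upsilon(\zeta)$, obtained by composing $R_0(\zeta)\colon L_2(\Omega)\to\mathfrak{D}$ with the embedding $\mathfrak{D}\hookrightarrow L_2(\Omega)$ of norm $\mu_+^{-1/2}$, is what generates the extra factor $1+(\mu_++2d_0/3)\mu_+^{-1}(3+3\mu_+d_0^{-1})$ appearing in $C_2$. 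This reproduces the argument of \cite[(3.41), (3.47)]{JPSS24}.
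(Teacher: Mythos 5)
Your proof is correct and follows essentially the same route the paper indicates: iterating the form-resolvent identity \eqref{L2.6_6} and composing the bounds \eqref{L2.6_4}--\eqref{L2.6_5}, \eqref{L2.6_11}, \eqref{R0_leq}--\eqref{R_leq} along the proper chain of spaces, which is exactly the scheme of \cite[(3.41), (3.47)]{JPSS24} that the paper cites and also mirrors its own proof of Lemma~\ref{lem3.2}. Your bookkeeping, including the $\mathfrak{D}\to\mathfrak{D}$ bound on $\Upsilon(\zeta)$ producing the factor $1+(\mu_++2d_0/3)\mu_+^{-1}(3+3\mu_+d_0^{-1})$, reproduces the stated constants $C_1$ and $C_2$ exactly.
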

We need more precise approximation of the resolvent $R(\bxi,\zeta)$ for small $|\bxi |$.
\begin{lemma}
\label{lem3.2}
For  $|\bxi| \le \delta_0(\alpha,\mu)$ and $\zeta \in \Gamma$ the following representation holds:
\begin{equation}
\label{3.25}
R(\bxi,\zeta) = R_0(\zeta) - \Upsilon(\zeta)   {\mathbb T}(\bxi) R_0(\zeta)
+  \Upsilon(\zeta)   {\mathbb T}(\bxi) \Upsilon(\zeta)   {\mathbb T}(\bxi) R_0(\zeta) + Z(\bxi,\zeta),
\end{equation}
where the remainder $Z(\bxi,\zeta)$ is subject to the upper bound
\begin{equation}
\label{3.25a}
\| Z(\bxi,\zeta) \|_{L_2(\Omega) \to L_2(\Omega)} \le C_3 |\bxi|^3, \quad  |\bxi| \le \delta_0(\alpha,\mu),
\quad \zeta \in \Gamma.
\end{equation}
\end{lemma}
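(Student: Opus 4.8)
The plan is to establish \eqref{3.25} by iterating the resolvent identity \eqref{L2.6_6} one step further than was done in the proof of Lemma \ref{lem3.1a}. Writing \eqref{L2.6_6} as $R(\bxi,\zeta) = R_0(\zeta) - \Upsilon(\zeta){\mathbb T}(\bxi)R(\bxi,\zeta)$, I would substitute this expression for the factor $R(\bxi,\zeta)$ on the right-hand side twice in succession. The first substitution produces the term $\Upsilon(\zeta){\mathbb T}(\bxi)\Upsilon(\zeta){\mathbb T}(\bxi)R(\bxi,\zeta)$; substituting once more for the $R(\bxi,\zeta)$ inside it and separating off the $R_0(\zeta)$-part yields exactly the three explicit terms in \eqref{3.25} together with the remainder $Z(\bxi,\zeta) = -\bigl(\Upsilon(\zeta){\mathbb T}(\bxi)\bigr)^3 R(\bxi,\zeta)$. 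Thus the representation itself is immediate; all the work lies in estimating $Z(\bxi,\zeta)$.

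To bound $Z(\bxi,\zeta)$ I would split its seven factors between the two spaces $\mathfrak{D}$ and $L_2(\Omega)$, choosing for each factor a norm for which an estimate is already available. Reading $R(\bxi,\zeta)$ as an operator from $L_2(\Omega)$ to $\mathfrak{D}$ (bounded by \eqref{R_leq}), the two rightmost copies of ${\mathbb T}(\bxi)$ as operators in $\mathfrak{D}$ with norm $\le \check{c}(d,\alpha)|\bxi|$ by \eqref{L2.6_4}, the leftmost copy of ${\mathbb T}(\bxi)$ as an operator from $\mathfrak{D}$ to $L_2(\Omega)$ with norm $\le \mu_+^{-1/2}\check{c}(d,\alpha)|\bxi|$ by \eqref{L2.6_5}, the two inner copies of $\Upsilon(\zeta)$ as operators in $\mathfrak{D}$, and the leftmost copy of $\Upsilon(\zeta)$ as an operator in $L_2(\Omega)$ bounded by \eqref{L2.6_11}, one checks that the composition is well defined and maps $L_2(\Omega)$ into $L_2(\Omega)$; multiplying the seven bounds then gives $\|Z(\bxi,\zeta)\|_{L_2(\Omega)\to L_2(\Omega)} \le C_3|\bxi|^3$.

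The only ingredient not already recorded in the excerpt is a bound for $\Upsilon(\zeta)$ acting in $\mathfrak{D}$ (needed for the two inner copies). I would obtain it from $\Upsilon(\zeta) = I + (\zeta+\mu_+)R_0(\zeta)$ of \eqref{L2.6_7}, using \eqref{R0_leq}, the embedding estimate \eqref{L2.6_1}, and the inequality $|\zeta+\mu_+| \le \mu_+ + 2d_0/3$ valid on $\Gamma$; this gives $\|\Upsilon(\zeta)\|_{\mathfrak{D}\to\mathfrak{D}} \le 1 + (\mu_+ + 2d_0/3)\mu_+^{-1}(3+3\mu_+ d_0^{-1})$, which is precisely the factor already appearing in the constant $C_2$ of Lemma \ref{lem3.1a}. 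The resulting explicit constant is $C_3 = \check{c}(d,\alpha)^3\beta_0(d,\alpha)\mu_+^{-1}(3+3\mu_+ d_0^{-1})^2\bigl(1 + (\mu_+ + 2d_0/3)\mu_+^{-1}(3+3\mu_+ d_0^{-1})\bigr)^2$. I do not expect any genuine obstacle: the argument is a routine two-fold iteration of the resolvent identity followed by a product of previously established bounds, and the only point requiring care is the bookkeeping of which Hilbert space and which operator norm each of the seven factors of $Z(\bxi,\zeta)$ is measured in, so that the composition makes sense and returns to $L_2(\Omega)$.
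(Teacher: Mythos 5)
Your proposal is correct and follows essentially the same route as the paper: the representation \eqref{3.25} with $Z(\bxi,\zeta)=-\bigl(\Upsilon(\zeta)\mathbb{T}(\bxi)\bigr)^{3}R(\bxi,\zeta)$ comes from iterating \eqref{L2.6_6} twice, and the remainder is bounded by the same norm estimates \eqref{L2.6_4}, \eqref{L2.6_5}, \eqref{L2.6_11}, \eqref{R0_leq}, \eqref{R_leq}. The only cosmetic difference is that the paper expands the two inner factors $\Upsilon(\zeta)=I+(\zeta+\mu_+)R_0(\zeta)$ and estimates the resulting three terms separately, whereas you bound $\|\Upsilon(\zeta)\|_{\mathfrak{D}\to\mathfrak{D}}$ directly; since $(1+x)^2=1+2x+x^2$, your constant $C_3$ coincides with the paper's $C_3^{(1)}+C_3^{(2)}+C_3^{(3)}$.
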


\begin{proof}[Proof]
Iterating the identity in \eqref{L2.6_6} two times we obtain representation \eqref{3.25} with the residual term
$$
Z(\bxi,\zeta) = -  \Upsilon(\zeta)   {\mathbb T}(\bxi) \Upsilon(\zeta)   {\mathbb T}(\bxi)  \Upsilon(\zeta)   {\mathbb T}(\bxi) R(\bxi,\zeta).
$$
Due to \eqref{L2.6_7}, this term can be rearranged as follows:
\begin{equation*}
\begin{aligned}
Z(\bxi,\zeta) = &- \Upsilon(\zeta) {\mathbb T}(\bxi)^3  R(\bxi,\zeta)
- (\zeta+\mu_+)  \Upsilon(\zeta) \left({\mathbb T}(\bxi) R_0(\zeta) {\mathbb T}(\bxi)^2 + {\mathbb T}(\bxi)^2 R_0(\zeta)  {\mathbb T}(\bxi) \right) R(\bxi,\zeta)
\\
&- (\zeta+\mu_+)^2  \Upsilon(\zeta) {\mathbb T}(\bxi) R_0(\zeta)  {\mathbb T}(\bxi) R_0(\zeta)
{\mathbb T}(\bxi) R(\bxi,\zeta).
\end{aligned}
\end{equation*}
Denoting the terms on the right-hand side by  $Z_l(\bxi,\zeta)$, $l=1,2,3$, and taking into account
\eqref{L2.6_4}, \eqref{L2.6_5}, \eqref{L2.6_11}--\eqref{R_leq} as well as the inequality
$|\zeta| \le 2d_0/3$ which is valid for all $\zeta \in \Gamma$, we conclude that for all $|\bxi| \le \delta_0(\alpha,\mu)$,
$$
\begin{aligned}
\| Z_1(\bxi,\zeta)\|_{L_2(\Omega) \to L_2(\Omega)}& \le \|  \Upsilon(\zeta) \|_{L_2(\Omega) \to L_2(\Omega)}
\| {\mathbb T}(\bxi)\|_{\mathfrak{D} \to L_2(\Omega)}
\| {\mathbb T}(\bxi)\|^2_{\mathfrak{D} \to \mathfrak{D}} \|R(\bxi,\zeta)\|_{L_2(\Omega) \to \mathfrak{D}}
\\
&\le C_{3}^{(1)}|\bxi|^3,\quad  C_{3}^{(1)} =  \check{c}(d,\alpha)^3 \mu_+^{-1}  \beta_0(d,\alpha)(3 + 3 \mu_+ d_0^{-1})^2.
\end{aligned}
$$
$$
\begin{aligned}
\| Z_2(\bxi,\zeta)\|_{L_2(\Omega) \to L_2(\Omega)}& \le 2 |\zeta+\mu_+| \|  \Upsilon(\zeta) \|_{L_2(\Omega) \to L_2(\Omega)}
\| {\mathbb T}(\bxi)\|^2_{\mathfrak{D} \to L_2(\Omega)}
\| {\mathbb T}(\bxi)\|_{\mathfrak{D} \to \mathfrak{D}}
\\
&\times \|R_0(\zeta)\|_{L_2(\Omega) \to \mathfrak{D}}\|R(\bxi,\zeta)\|_{L_2(\Omega) \to \mathfrak{D}}
\\
&\le C_{3}^{(2)} |\bxi|^3,\quad  C_{3}^{(2)} =  2 \check{c}(d,\alpha)^3 \mu_+^{-2}  \beta_0(d,\alpha)(3 + 3 \mu_+ d_0^{-1})^3 (\mu_+ + 2d_0/3),
\end{aligned}
$$
$$
\begin{aligned}
\| Z_3(\bxi,\zeta)\|_{L_2(\Omega) \to L_2(\Omega)}& \le  |\zeta+\mu_+|^2 \|  \Upsilon(\zeta) \|_{L_2(\Omega) \to L_2(\Omega)}
\| {\mathbb T}(\bxi)\|^3_{\mathfrak{D} \to L_2(\Omega)}
\\
&\times
\|R_0(\zeta)\|^2_{L_2(\Omega) \to \mathfrak{D}}\|R(\bxi,\zeta)\|_{L_2(\Omega) \to \mathfrak{D}}
\\
&\le C_{3}^{(3)}|\bxi|^3,\quad  C_{3}^{(3)} =   \check{c}(d,\alpha)^3 \mu_+^{-3}  \beta_0(d,\alpha)(3 + 3 \mu_+ d_0^{-1})^4 (\mu_+ + 2d_0/3)^2.
\end{aligned}
$$
This yields the desired estimate \eqref{3.25a} with the constant $C_3 = C_{3}^{(1)} + C_{3}^{(2)} + C_{3}^{(3)}$.
\end{proof}

Next, according to Lemma  \ref{lem2.4_new}, the operator  ${\mathbb T}(\bxi)$ can be represented in the form
\begin{equation}
\label{3.28}
{\mathbb T}(\bxi) = \sum_{j=1}^d \xi_j {\mathbb T}_j + \wt{\mathbb T}(\bxi),
\end{equation}
where ${\mathbb T}_j$ and  $\wt{\mathbb T}(\bxi)$ are  self-adjoint operators in $\mathfrak D$ corresponding to the forms
$a^{(j)}[u,u]$ and $\wt{a}(\bxi)[u,u]$, respectively.
By  \eqref{aj_le} and \eqref{atilde} we have
\vskip -2mm
\begin{align}
\label{3.29}
\| {\mathbb T}_j \|_{\mathfrak{D} \to \mathfrak{D}} & \le c_3(d,\alpha), \quad j=1,\dots,d,
\\
\label{3.30}
\| \wt{\mathbb T}(\bxi) \|_{\mathfrak{D} \to \mathfrak{D}} & \le \wt{c}_3(d,\alpha) |\bxi|^\alpha, \quad  \bxi \in \wt{\Omega}.
\end{align}
Combining these inequalities with   \eqref{L2.6_1} yields
\begin{align}
\label{3.29a}
\| {\mathbb T}_j \|_{\mathfrak{D} \to L_2(\Omega)} & \le \mu_+^{-1/2}c_3(d,\alpha), \quad j=1,\dots,d,
\\
\label{3.30a}
\| \wt{\mathbb T}(\bxi) \|_{\mathfrak{D} \to L_2(\Omega)} & \le\mu_+^{-1/2} \wt{c}_3(d,\alpha) |\bxi|^\alpha, \quad  \bxi \in \wt{\Omega}.
\end{align}
Now from Lemma \ref{lem3.2} and representation  \eqref{3.28} we deduce
\begin{lemma}
For $|\bxi| \le \delta_0(\alpha,\mu)$ and $\zeta \in \Gamma$ the resolvent $R(\bxi,\zeta)$ admits the
representation
\begin{equation}
\label{3.31}
R(\bxi,\zeta) = R_0(\zeta) - \Upsilon(\zeta)   {\mathbb T}(\bxi) R_0(\zeta)
+ \sum_{j,k=1}^d \xi_j \xi_k \Upsilon(\zeta)   {\mathbb T}_j \Upsilon(\zeta)   {\mathbb T}_k R_0(\zeta) +
Z(\bxi,\zeta) + \wt{Z}(\bxi,\zeta),
\end{equation}
where the operator $Z(\bxi,\zeta)$ obeys the upper bound  \eqref{3.25a}, and the operator $\wt{Z}(\bxi,\zeta)$
satisfies the estimate
\begin{equation}
\label{3.32}
\| \wt{Z}(\bxi,\zeta) \|_{L_2(\Omega) \to L_2(\Omega)} \le C_{4} |\bxi|^{1+\alpha}, \quad  |\bxi| \le \delta_0(\alpha,\mu),
\quad \zeta \in \Gamma.
\end{equation}
\end{lemma}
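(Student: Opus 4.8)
The plan is to start from the three-term resolvent expansion already established in Lemma~\ref{lem3.2}, namely \eqref{3.25}, and to substitute into its leading correction the linear-in-$\bxi$ decomposition \eqref{3.28} of the operator ${\mathbb T}(\bxi)$. Concretely, I would write ${\mathbb T}(\bxi)={\mathbb T}'(\bxi)+\wt{\mathbb T}(\bxi)$ with ${\mathbb T}'(\bxi):=\sum_{j=1}^d\xi_j{\mathbb T}_j$, plug this into the quadratic term $\Upsilon(\zeta){\mathbb T}(\bxi)\Upsilon(\zeta){\mathbb T}(\bxi)R_0(\zeta)$ of \eqref{3.25}, and expand the product into four pieces. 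The piece $\Upsilon(\zeta){\mathbb T}'(\bxi)\Upsilon(\zeta){\mathbb T}'(\bxi)R_0(\zeta)$ is exactly $\sum_{j,k=1}^d\xi_j\xi_k\,\Upsilon(\zeta){\mathbb T}_j\Upsilon(\zeta){\mathbb T}_k R_0(\zeta)$, which is the term retained in \eqref{3.31}; the other three pieces, $\Upsilon{\mathbb T}'\Upsilon\wt{\mathbb T}R_0$, $\Upsilon\wt{\mathbb T}\Upsilon{\mathbb T}'R_0$ and $\Upsilon\wt{\mathbb T}\Upsilon\wt{\mathbb T}R_0$, I would collect into the remainder $\wt Z(\bxi,\zeta)$. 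The operator $Z(\bxi,\zeta)$ from Lemma~\ref{lem3.2} already obeys \eqref{3.25a} and is carried over verbatim, so \eqref{3.31} holds identically and only the bound \eqref{3.32} for $\wt Z$ has to be proved.

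For that I would estimate each of the three constituents of $\wt Z(\bxi,\zeta)$ by the same telescoping of operator norms used in the proof of Lemma~\ref{lem3.2}: read each product from right to left, apply $R_0(\zeta):L_2(\Omega)\to{\mathfrak D}$ with the bound \eqref{R0_leq}, use the ${\mathfrak D}\to{\mathfrak D}$ bounds \eqref{3.29}, \eqref{3.30} on the inner ${\mathbb T}$-factors and the ${\mathfrak D}\to L_2(\Omega)$ bounds \eqref{3.29a}, \eqref{3.30a} on the factor immediately preceding the outermost $\Upsilon(\zeta)$, and bound the $\Upsilon$-factors by $\|\Upsilon(\zeta)\|_{L_2(\Omega)\to L_2(\Omega)}\le 3+3\mu_+d_0^{-1}$ from \eqref{L2.6_11}, together with the elementary estimate of $\|\Upsilon(\zeta)\|_{{\mathfrak D}\to{\mathfrak D}}$ that follows from \eqref{L2.6_7}, \eqref{R0_leq} and \eqref{L2.6_1}. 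Each of the three terms carries at least one factor $\wt{\mathbb T}(\bxi)$, which by \eqref{3.30}, \eqref{3.30a} is $O(|\bxi|^\alpha)$; the two mixed terms additionally carry one factor $\xi_j$ and the purely $\wt{\mathbb T}$-term a second factor $\wt{\mathbb T}(\bxi)$, so the mixed terms contribute $O(|\bxi|^{1+\alpha})$ and the last one $O(|\bxi|^{2\alpha})$. Since $\alpha>1$ and $|\bxi|\le\delta_0(\alpha,\mu)<\pi$, one has $|\bxi|^{2\alpha}\le\delta_0^{\alpha-1}|\bxi|^{1+\alpha}$, which absorbs the last contribution, and summing the resulting explicit constants gives \eqref{3.32} with $C_4$ expressed through $c_3(d,\alpha)$, $\wt c_3(d,\alpha)$, $\mu_+$, $d_0$ and $\delta_0(\alpha,\mu)$.

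The computation is entirely routine once the split of ${\mathbb T}(\bxi)$ from Lemma~\ref{lem2.4_new} and the norm estimates \eqref{3.29}--\eqref{3.30a}, \eqref{L2.6_11}, \eqref{R0_leq} are in hand. The only slightly delicate point — exactly as in Lemma~\ref{lem3.2} — is the bookkeeping of which of the spaces ${\mathfrak D}$ or $L_2(\Omega)$ each operator in a given product is regarded as acting between, so that the composite is well defined and the chosen product of one-step norms is finite; this is not a genuine obstacle, as the mechanism is identical to that of the already-proven Lemma~\ref{lem3.2}.
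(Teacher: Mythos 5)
Your proposal is correct and follows essentially the same route as the paper: substitute the splitting ${\mathbb T}(\bxi)=\sum_j\xi_j{\mathbb T}_j+\wt{\mathbb T}(\bxi)$ from \eqref{3.28} into the quadratic term of \eqref{3.25}, keep the bilinear part in the ${\mathbb T}_j$, and estimate the leftover pieces via \eqref{L2.6_11}, \eqref{R0_leq}, \eqref{3.29}--\eqref{3.30a}. The only cosmetic differences are that the paper groups the remainder as two terms (one keeping the full ${\mathbb T}(\bxi)$, so every piece is directly $O(|\bxi|^{1+\alpha})$ without your absorption of the $O(|\bxi|^{2\alpha})$ term) and handles the domain bookkeeping by expanding the inner $\Upsilon(\zeta)=I+(\zeta+\mu_+)R_0(\zeta)$ instead of using a $\mathfrak D\to\mathfrak D$ bound on $\Upsilon(\zeta)$, which is an equivalent device.
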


\begin{proof}[Proof]
Substituting  \eqref{3.28} into the third term on the right-hand side of \eqref{3.25} and considering \eqref{L2.6_7},
we obtain the representation in \eqref{3.31} with
$$
\begin{aligned}
\wt{Z}(\bxi,\zeta) &= \sum_{j=1}^d \xi_j \Upsilon(\zeta)   {\mathbb T}_j \Upsilon(\zeta) \wt{\mathbb T}(\bxi) R_0(\zeta)
+ \Upsilon(\zeta) \wt{\mathbb T}(\bxi) \Upsilon(\zeta) {\mathbb T}(\bxi)R_0(\zeta)
\\
&= \sum_{j=1}^d \xi_j \Upsilon(\zeta)   {\mathbb T}_j \wt{\mathbb T}(\bxi) R_0(\zeta) +
(\zeta + \mu_+) \sum_{j=1}^d \xi_j \Upsilon(\zeta)   {\mathbb T}_j R_0(\zeta) \wt{\mathbb T}(\bxi) R_0(\zeta)
\\
&+ \Upsilon(\zeta) \wt{\mathbb T}(\bxi) {\mathbb T}(\bxi) R_0(\zeta) +
(\zeta + \mu_+) \Upsilon(\zeta) \wt{\mathbb T}(\bxi) R_0(\zeta)  {\mathbb T}(\bxi) R_0(\zeta).
\end{aligned}
$$
Denote the terms on the right-hand side by $\wt{Z}_l(\bxi,\zeta)$, $l=1,2,3,4$. Then, due to
 \eqref{L2.6_4}, \eqref{L2.6_5}, \eqref{L2.6_11}, \eqref{R0_leq} and \eqref{3.30}--\eqref{3.30a}, for $|\bxi| \le \delta_0(\alpha,\mu)$ and $\zeta \in \Gamma$ one has
\begin{align*}
\begin{split}
&\| \wt{Z}_1(\bxi,\zeta)\|_{L_2(\Omega) \to L_2(\Omega)} \le \sum_{j=1}^d |\xi_j| \|  \Upsilon(\zeta) \|_{L_2(\Omega) \to L_2(\Omega)}
\| {\mathbb T}_j\|_{\mathfrak{D} \to L_2(\Omega)}
\| \wt{\mathbb T}(\bxi)\|_{\mathfrak{D} \to \mathfrak{D}} \|R_0(\zeta)\|_{L_2(\Omega) \to \mathfrak{D}}
\\
&\qquad \le C_{4}^{(1)} |\bxi|^{1+\alpha},\quad  C_{4}^{(1)} =  \sqrt{d}\,{c}_3(d,\alpha) \wt{c}_3(d,\alpha) \mu_+^{-1}  (3 + 3 \mu_+ d_0^{-1})^2,
\end{split}
\\
\begin{split}
&\| \wt{Z}_2(\bxi,\zeta)\|_{L_2(\Omega) \to L_2(\Omega)}
\\
&\qquad \le  |\zeta + \mu_+| \sum_{j=1}^d |\xi_j| \|  \Upsilon(\zeta) \|_{L_2(\Omega) \to L_2(\Omega)}
\| {\mathbb T}_j\|_{\mathfrak{D} \to L_2(\Omega)}
 \| \wt{\mathbb T}(\bxi)\|_{\mathfrak{D} \to L_2(\Omega)} \|R_0(\zeta)\|^2_{L_2(\Omega) \to \mathfrak{D}}
\\
&\qquad \le C_{4}^{(2)} |\bxi|^{1+\alpha},\quad  C_{4}^{(2)} = \sqrt{d}\,{c}_3(d,\alpha) \wt{c}_3(d,\alpha)
\mu_+^{-2}  (3 + 3 \mu_+ d_0^{-1})^3 (\mu_+ + 2d_0/3),
\end{split}
\\
\begin{split}
&\| \wt{Z}_3(\bxi,\zeta)\|_{L_2(\Omega) \to L_2(\Omega)} \le \|  \Upsilon(\zeta) \|_{L_2(\Omega) \to L_2(\Omega)}
\| \wt{\mathbb T}(\bxi)\|_{\mathfrak{D} \to L_2(\Omega)}
\| {\mathbb T}(\bxi)\|_{\mathfrak{D} \to \mathfrak{D}} \|R_0(\zeta)\|_{L_2(\Omega) \to \mathfrak{D}}
\\
&\qquad \le C_{4}^{(3)} |\bxi|^{1+\alpha},\quad  C_{4}^{(3)}  =  \check{c}(d,\alpha) \wt{c}_3(d,\alpha)
\mu_+^{-1}  (3 + 3 \mu_+ d_0^{-1})^2,
\end{split}
\\
\begin{split}
&\| \wt{Z}_4(\bxi,\zeta)\|_{L_2(\Omega) \to L_2(\Omega)} \le |\zeta + \mu_+|
 \|  \Upsilon(\zeta) \|_{L_2(\Omega) \to L_2(\Omega)}
\| \wt{\mathbb T}(\bxi)\|_{\mathfrak{D} \to L_2(\Omega)}
\| {\mathbb T}(\bxi)\|_{\mathfrak{D} \to L_2(\Omega)}
\\
&\qquad  \times\!\|R_0(\zeta)\|^2_{L_2(\Omega) \to \mathfrak{D}}\le\! C_{4}^{(4)}\! |\bxi|^{1+\alpha},\quad\!  C_{4}^{(4)}\!\!  =  \check{c}(d,\alpha) \wt{c}_3(d,\alpha) \mu_+^{-2}  (3 + 3 \mu_+ d_0^{-1})^3 (\mu_+\! + 2d_0/3).
\end{split}
\end{align*}
This yields the required estimate  \eqref{3.32} with a constant
$C_{4} = C_{4}^{(1)} +C_{4}^{(2)} +C_{4}^{(3)} +C_{4}^{(4)}$.
\end{proof}

\subsection{Threshold approximations}
We use the notation $\mathfrak{N}$ for the kernel $\operatorname{Ker}\A(\mathbf{0};\alpha,\mu)=\mathcal{L}\{\mathbf{1}_{\Omega}\}$,
and denote by $P$ the orthogonal projection on  $\mathfrak{N}$:  $P = (\cdot, \1_\Omega)\1_\Omega$.
The following statement is a consequence of the Riesz formulae in  \eqref{e2.6}, \eqref{e2.6a} and Lemma \ref{lem3.1a},
see also \cite[Propositions 3.4, 3.5]{JPSS24}:

\begin{proposition}[\cite{JPSS24}]
\label{prop2.3a}
Let conditions  \eqref{e1.1}, \eqref{e1.2} be fulfilled, and assume that $1 <  \alpha < 2$.
Then the following estimates hold:
\begin{align}
\label{F-P_a}
\| F(\bxi) - P \|_{L_2(\Omega) \to L_2(\Omega)} \le C_5 |\bxi|, \quad  |\bxi| \le \delta_0(\alpha,\mu),
\\
\nonumber
\| \A(\bxi) F(\bxi) - \mu_+ P \mathbb{T}(\bxi) P \|_{L_2(\Omega) \to L_2(\Omega)} \le C_6 |\bxi|^2, \quad  |\bxi| \le \delta_0(\alpha,\mu),
\end{align}
where $C_5 =  (2\pi)^{-1} l_\Gamma C_1$ and $C_6 = (3 \pi)^{-1} l_\Gamma d_0 C_2$. These constants depend
only on  $d$, $\alpha$, $\mu_-$ and $\mu_+$.
\end{proposition}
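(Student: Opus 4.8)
The plan is to extract both operators from the Riesz formulae \eqref{e2.6}, \eqref{e2.6a} and to compare each of them with the corresponding contour integral for $\A(\mathbf{0})$. First I would note that, by Lemma~\ref{lem1.4} together with \eqref{e1.24a}, the contour $\Gamma$ encircles exactly one point of $\sigma(\A(\mathbf{0}))$, namely the simple eigenvalue $\lambda_1(\mathbf{0})=0$ whose spectral projection is $P$; the rest of $\sigma(\A(\mathbf{0}))$ lies in $[2^\alpha d_0,\infty)$ and hence outside $\Gamma$, since $|\zeta|\le 2d_0/3<2^\alpha d_0$ for $\zeta\in\Gamma$. Consequently,
\begin{equation*}
P=-\frac{1}{2\pi i}\oint_\Gamma R_0(\zeta)\,d\zeta,\qquad \A(\mathbf{0})P=0=-\frac{1}{2\pi i}\oint_\Gamma \zeta\,R_0(\zeta)\,d\zeta .
\end{equation*}

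For the first estimate, subtracting these from \eqref{e2.6} yields
\begin{equation*}
F(\bxi)-P=-\frac{1}{2\pi i}\oint_\Gamma\bigl(R(\bxi,\zeta)-R_0(\zeta)\bigr)\,d\zeta ,
\end{equation*}
and bounding the contour integral by $(2\pi)^{-1}l_\Gamma$ times the supremum over $\Gamma$ of the norm of the integrand, which by the first inequality of Lemma~\ref{lem3.1a} is at most $C_1|\bxi|$, gives $\|F(\bxi)-P\|\le(2\pi)^{-1}l_\Gamma C_1|\bxi|=C_5|\bxi|$.

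For the second estimate I would insert into \eqref{e2.6a} the expansion furnished by the second part of Lemma~\ref{lem3.1a}, namely $R(\bxi,\zeta)=R_0(\zeta)-\Upsilon(\zeta){\mathbb T}(\bxi)R_0(\zeta)+W(\bxi,\zeta)$ with $W(\bxi,\zeta):=R(\bxi,\zeta)-R_0(\zeta)+\Upsilon(\zeta){\mathbb T}(\bxi)R_0(\zeta)$ and $\|W(\bxi,\zeta)\|\le C_2|\bxi|^2$ on $\Gamma$. Then the $R_0(\zeta)$-contribution is $-\frac{1}{2\pi i}\oint_\Gamma\zeta R_0(\zeta)\,d\zeta=\A(\mathbf{0})P=0$; the $W$-contribution is bounded in norm by $(2\pi)^{-1}l_\Gamma\,(2d_0/3)\,C_2|\bxi|^2=C_6|\bxi|^2$; and the $-\Upsilon(\zeta){\mathbb T}(\bxi)R_0(\zeta)$-contribution equals $\frac{1}{2\pi i}\oint_\Gamma\zeta\,\Upsilon(\zeta){\mathbb T}(\bxi)R_0(\zeta)\,d\zeta$. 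Thus it remains to prove
\begin{equation*}
\frac{1}{2\pi i}\oint_\Gamma\zeta\,\Upsilon(\zeta){\mathbb T}(\bxi)R_0(\zeta)\,d\zeta=\mu_+\,P{\mathbb T}(\bxi)P ,
\end{equation*}
which, combined with the above, gives $\A(\bxi)F(\bxi)=\mu_+P{\mathbb T}(\bxi)P+(\text{a term of norm}\le C_6|\bxi|^2)$, as asserted. To check the displayed identity I would substitute $\Upsilon(\zeta)=I+(\zeta+\mu_+)R_0(\zeta)$ from \eqref{L2.6_7} and the Laurent decomposition $R_0(\zeta)=-\zeta^{-1}P+S_0(\zeta)$, where $S_0(\zeta):=(I-P)R_0(\zeta)=R_0(\zeta)(I-P)$ is holomorphic inside and on $\Gamma$, regarded as a family of bounded operators $L_2(\Omega)\to\mathfrak{D}$ (by \eqref{R0_leq}), because $\sigma\bigl(\A(\mathbf{0})|_{\Ran(I-P)}\bigr)\subset[2^\alpha d_0,\infty)$ does not meet $\Gamma$. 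Expanding the integrand, every resulting summand is either constant in $\zeta$ or holomorphic inside $\Gamma$, and so has zero contour integral, except the summand $\zeta(\zeta+\mu_+)\zeta^{-2}P{\mathbb T}(\bxi)P=(1+\mu_+\zeta^{-1})P{\mathbb T}(\bxi)P$, whose only singularity is a simple pole at $\zeta=0$ with residue $\mu_+P{\mathbb T}(\bxi)P$; this gives the identity.

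The argument is essentially a residue computation and I do not expect a serious obstacle. The two places that require some care are: (i) the Laurent decomposition of $R_0$ near $\Gamma$, which rests on \eqref{e1.24a} ensuring that $\Gamma$ separates $0$ from the remaining spectrum of $\A(\mathbf{0})$; and (ii) the bookkeeping of mapping properties, so that every operator product appearing above is a well-defined holomorphic family of bounded operators on $L_2(\Omega)$ to which Cauchy's theorem applies termwise — here one uses that ${\mathbb T}(\bxi)$ is bounded both $\mathfrak{D}\to\mathfrak{D}$ and $\mathfrak{D}\to L_2(\Omega)$, while $R_0(\zeta)$, and hence $S_0(\zeta)$, is bounded $L_2(\Omega)\to\mathfrak{D}$. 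Finally, the claim that $C_5,C_6$ depend only on $d,\alpha,\mu_-,\mu_+$ follows at once from the same property of $l_\Gamma$, $d_0$, $C_1$ and $C_2$.
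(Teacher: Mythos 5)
Your proof is correct and follows essentially the same route as the paper: the paper deduces this proposition from the Riesz formulae \eqref{e2.6}, \eqref{e2.6a} together with Lemma~\ref{lem3.1a}, and the residue identity $\frac{1}{2\pi i}\oint_\Gamma \zeta\,\Upsilon(\zeta)\mathbb{T}(\bxi)R_0(\zeta)\,d\zeta=\mu_+P\mathbb{T}(\bxi)P$ you establish via the decomposition $R_0(\zeta)=-\zeta^{-1}P+R_0(\zeta)P^{\perp}$ is exactly the computation carried out in \eqref{4.10}, \eqref{3.45aa}. Your constants $C_5$, $C_6$ also match the stated ones, so nothing further is needed.
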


We also need more accurate approximation of the operator  $\A(\bxi) F(\bxi)$.

\begin{proposition}
\label{prop2.5_2}
Let conditions  \eqref{e1.1}, \eqref{e1.2} be fulfilled, and assume that $1 <  \alpha < 2$. Then, for $|\bxi| \le \delta_0(\alpha,\mu)$ the operator $\A(\bxi) F(\bxi)$ admits the representation
\begin{equation}
\label{F= P+ F1_2}
\A(\bxi) F(\bxi) =   \mu_+ P  \mathbb{T}(\bxi)  P  + \sum_{j,k =1}^d \xi_j \xi_k G_{jk}
+ {\Phi}(\bxi),
 \end{equation}
 where
 \begin{equation}
\label{Gjk==}
 G_{jk} =-\mu_+ P \mathbb{T}_j  P^\perp \mathbb{T}_k P - \mu_+^2 P \mathbb{T}_j  R_0^\perp(0) \mathbb{T}_k P,
 \quad j,k =1,\dots,d,
 \end{equation}
 $R_0^\perp(0) = P^\perp \A(\mathbf{0})^{-1}P^\perp$, and $ \A(\mathbf{0})^{-1}$ stands for the operator which is inverse to $ \A(\mathbf{0})\vert_{{\mathfrak N}^\perp}$ and well-defined as a bounded operator from ${\mathfrak N}^\perp$ to ${\mathfrak N}^\perp$.
The following estimates hold:
\begin{align}
\label{Gjk_le}
\bigl\| G_{jk} \bigr\|_{L_2(\Omega) \to L_2(\Omega)} &\le C_7,\quad j,k=1,\dots,d,
\\
\label{F-P-O(xi)_2}
\bigl\| {\Phi}(\bxi) \bigr\|_{L_2(\Omega) \to L_2(\Omega)} &\le C_8 |\bxi|^{1+\alpha}, \quad  |\bxi| \le \delta_0(\alpha,\mu).
\end{align}
The constants $C_7, C_8$ depend only on  $d$, $\alpha$, $\mu_-$, $\mu_+$.
\end{proposition}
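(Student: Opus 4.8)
The plan is to substitute the representation \eqref{3.31} into the Riesz formula \eqref{e2.6a}, $\A(\bxi)F(\bxi) = -\tfrac{1}{2\pi i}\oint_\Gamma \zeta R(\bxi,\zeta)\,d\zeta$, and to evaluate the resulting contour integral term by term. The term $R_0(\zeta)$ contributes $-\tfrac{1}{2\pi i}\oint_\Gamma \zeta R_0(\zeta)\,d\zeta = \A(\mathbf 0)F(\mathbf 0) = \A(\mathbf 0)P = 0$, since $F(\mathbf 0) = P$ and $\operatorname{Ran}P = \mathfrak N = \operatorname{Ker}\A(\mathbf 0)$ by Lemma \ref{lem1.4}. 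The terms $Z(\bxi,\zeta)$ and $\wt Z(\bxi,\zeta)$ I would collect into $\Phi(\bxi) := -\tfrac{1}{2\pi i}\oint_\Gamma \zeta\bigl(Z(\bxi,\zeta) + \wt Z(\bxi,\zeta)\bigr)\,d\zeta$ and bound it at once using \eqref{3.25a}, \eqref{3.32}, the length $l_\Gamma$ of $\Gamma$, the inequality $|\zeta| \le 2d_0/3$ on $\Gamma$, and the elementary estimate $|\bxi|^3 = |\bxi|^{2-\alpha}\,|\bxi|^{1+\alpha} \le (\delta_0(\alpha,\mu))^{2-\alpha}|\bxi|^{1+\alpha}$, valid for $|\bxi| \le \delta_0(\alpha,\mu)$ because $\alpha < 2$; this gives \eqref{F-P-O(xi)_2}. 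Thus the substance of the proof lies in computing the contributions of the second and third terms of \eqref{3.31}.

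For both contributions I would use the decomposition $R_0(\zeta) = -\zeta^{-1}P + R_0^\perp(\zeta)$, where $R_0^\perp(\zeta) := P^\perp(\A(\mathbf 0)-\zeta I)^{-1}P^\perp$ is holomorphic on and inside $\Gamma$: by Lemma \ref{lem1.4} the only point of $\sigma(\A(\mathbf 0))$ enclosed by $\Gamma$ is $\lambda_1(\mathbf 0) = 0$, while by \eqref{e1.24a} the remaining spectrum lies on $[2^\alpha d_0,\infty)$, hence outside the region bounded by $\Gamma$ (whose points have real part at most $2d_0/3 < 2^\alpha d_0$). Substituting $\Upsilon(\zeta) = I + (\zeta+\mu_+)R_0(\zeta)$ (see \eqref{L2.6_7}) together with this decomposition into $\zeta\,\Upsilon(\zeta)\mathbb{T}(\bxi)R_0(\zeta)$, every summand is either holomorphic inside $\Gamma$ (those built only from $R_0^\perp$) or carries a pole at $\zeta = 0$ only, with an elementary scalar residue; a short computation shows that the single nonzero contribution comes from $\zeta(\zeta+\mu_+)R_0(\zeta)\mathbb{T}(\bxi)R_0(\zeta)$, whose residue at $0$ is $\mu_+ P\mathbb{T}(\bxi)P$. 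Hence the second term of \eqref{3.31} contributes exactly the leading term $\mu_+ P\mathbb{T}(\bxi)P$ of \eqref{F= P+ F1_2}.

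The third term produces $\sum_{j,k=1}^d \xi_j\xi_k G_{jk}$ with $G_{jk} := -\tfrac{1}{2\pi i}\oint_\Gamma \zeta\,\Upsilon(\zeta)\mathbb{T}_j\Upsilon(\zeta)\mathbb{T}_kR_0(\zeta)\,d\zeta$, and computing $G_{jk}$ is the heart of the argument. I would expand $(I + (\zeta+\mu_+)R_0)\mathbb{T}_j(I + (\zeta+\mu_+)R_0)\mathbb{T}_kR_0$ into its four summands, substitute $R_0 = -\zeta^{-1}P + R_0^\perp$ into each of the (at most three) resolvent factors, multiply by $\zeta$, and collect the residues at $\zeta = 0$; since $R_0^\perp$ is holomorphic there, only the $-\zeta^{-1}P$ factors produce poles, and the residues of the scalar factors $\zeta^{-m}(\zeta+\mu_+)^n$ are immediate. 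The cancellation that yields \eqref{Gjk==} is the identity $P\mathbb{T}_jP = 0$: as $\mathbb{T}_j$ is the operator in $\mathfrak D$ attached to the form $a^{(j)}$ and $\mathfrak N = \mathcal{L}\{\1_\Omega\}$, it suffices to know that $a^{(j)}[\1_\Omega,\1_\Omega] = 0$, which is clear from \eqref{2.36_new} since the periodic extension of $\1_\Omega$ is the constant function $1$ and $\operatorname{Re}(i\cdot 1\cdot\overline 1) = 0$. Consequently every operator product containing a block $P\mathbb{T}_jP$ or $P\mathbb{T}_kP$ is zero; discarding those, the surviving residues give $G_{jk} = -\mu_+ P\mathbb{T}_j\mathbb{T}_kP - \mu_+^2\,P\mathbb{T}_jR_0^\perp(0)\mathbb{T}_kP$, and inserting $I = P + P^\perp$ between $\mathbb{T}_j$ and $\mathbb{T}_k$ and using $P\mathbb{T}_jP = 0$ once more turns $P\mathbb{T}_j\mathbb{T}_kP$ into $P\mathbb{T}_jP^\perp\mathbb{T}_kP$, which is exactly \eqref{Gjk==} (recall $R_0^\perp(0) = P^\perp\A(\mathbf 0)^{-1}P^\perp$). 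The uniform bound \eqref{Gjk_le} follows either directly from the contour-integral formula for $G_{jk}$, estimating the factors by \eqref{L2.6_11}, \eqref{R0_leq}, \eqref{3.29}, \eqref{3.29a}, or from the explicit expression via $\|P\| \le 1$, $\|\mathbb{T}_j\|_{\mathfrak D\to\mathfrak D} \le c_3(d,\alpha)$, $\|\mathbb{T}_j\|_{\mathfrak D\to L_2(\Omega)} \le \mu_+^{-1/2}c_3(d,\alpha)$, $\|R_0^\perp(0)\|_{L_2(\Omega)\to L_2(\Omega)} \le (2^\alpha d_0)^{-1}$ and $\|R_0^\perp(0)\|_{L_2(\Omega)\to\mathfrak D} \le \mu_+^{-1/2}(3 + 3\mu_+ d_0^{-1})$.

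Collecting the four contributions gives the representation \eqref{F= P+ F1_2}, and the constants $C_7$, $C_8$ so obtained depend only on $d$, $\alpha$, $\mu_-$, $\mu_+$. I expect the main obstacle to be the bookkeeping in the preceding paragraph: expanding the fourfold operator product, tracking which summands acquire a pole at $\zeta = 0$ and computing the corresponding residues, and recognizing which of the resulting operator products vanish by $P\mathbb{T}_\bullet P = 0$ — together with the routine but genuine care needed to compose $R_0(\zeta)$, $\Upsilon(\zeta)$ and $\mathbb{T}_j$ consistently between the spaces $\mathfrak D$ and $L_2(\Omega)$, which is handled as in \cite{JPSS24}.
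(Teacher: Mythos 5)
Your proposal is correct and follows essentially the same route as the paper's proof: substitute the representation \eqref{3.31} into the Riesz formula \eqref{e2.6a}, use the decomposition $R_0(\zeta) = -\zeta^{-1}P + R_0^\perp(\zeta)$ with $R_0^\perp$ holomorphic inside $\Gamma$ to compute the residues at $\zeta=0$, invoke $P\mathbb{T}_jP=0$ (via $a^{(j)}[\1_\Omega,\1_\Omega]=0$) to kill the spurious terms and obtain \eqref{Gjk==}, and bound $G_{jk}$ and $\Phi(\bxi)$ from the contour integrals using \eqref{L2.6_11}, \eqref{R0_leq}, \eqref{3.29}, \eqref{3.29a}, \eqref{3.25a}, \eqref{3.32}. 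The residue bookkeeping you outline (only the term with two $-\zeta^{-1}P$ factors surviving for $G_1(\bxi)$, and only the single mixed term surviving for $G_{jk}$ after $P\mathbb{T}_\bullet P=0$) reproduces the paper's computations \eqref{3.45aa}--\eqref{3.48} exactly, so there is no gap.
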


\begin{proof}[Proof]
By the Riesz formula \eqref{e2.6a} considering \eqref{3.31} we obtain
\begin{equation}
\label{AF==}
\A(\bxi)F(\bxi) = G_0  + {G}_1(\bxi) + \sum_{j,k=1}^d \xi_j \xi_k G_{jk} + {\Phi}(\bxi), \quad |\bxi|\le\delta_{0}(\alpha,\mu),
\end{equation}
with
\begin{align}
\nonumber
{G}_0 &=  - \frac{1}{2\pi i}\oint_{\Gamma}  R_0(\zeta) \zeta  \, d\zeta,
\\
\label{G1==}
{G}_1(\bxi) &=  \frac{1}{2\pi i}\oint_{\Gamma} \Upsilon(\zeta) {\mathbb T}(\bxi) R_0(\zeta) \zeta  \, d\zeta,
\\
\label{Gjk=}
{G}_{jk} &=  - \frac{1}{2\pi i}\oint_{\Gamma} \Upsilon(\zeta) {\mathbb T}_j \Upsilon(\zeta) {\mathbb T}_k
R_0(\zeta) \zeta  \, d\zeta,
\\
\label{Phi==}
{\Phi}(\bxi) &= - \frac{1}{2\pi i}\oint_{\Gamma} ({Z}(\bxi,\zeta) + \wt{Z}(\bxi,\zeta))  \zeta  \, d\zeta.
\end{align}
Letting $\bxi = {\mathbf 0}$ in \eqref{e2.6a} yields
\begin{equation}
\label{3.45aaa}
G_0 = \A(\mathbf{0})P =0.
\end{equation}
Considering \eqref{L2.6_7}--\eqref{R0_leq}, \eqref{3.29} and \eqref{3.29a} it is straightforward to estimate the operator
 \eqref{Gjk=}:
$$
\| G_{jk}\|_{L_2(\Omega) \to L_2(\Omega)} \le
 (3 + 3 \mu_+ d_0^{-1})^2 c_3^2 \mu_+^{-1} \left( 1 +  (\mu_+ + 2d_0/3) (3 + 3 \mu_+ d_0^{-1}) \mu_+^{-1} \right)
 \frac{l_\Gamma d_0}{3\pi} =: C_7.
$$
This gives inequlity \eqref{Gjk_le}.
Due to \eqref{3.25a} and \eqref{3.32}, for $|\bxi| \le \delta_0(\alpha,\mu)$, the operator in   \eqref{Phi==} can be
estimated as follows:
\begin{equation*}
\label{L3.5_6}
\| {\Phi}(\bxi) \|_{L_2(\Omega) \to L_2(\Omega)} \le
\left(C_3 |\bxi|^3 + C_4 |\bxi|^{1+\alpha} \right) \frac{l_\Gamma d_0}{3\pi} \le
\frac{2 (\pi+1) d_0^2}{9\pi} \left( C_3 (\pi \sqrt{d})^{2-\alpha} + C_4 \right) |\bxi|^{1+\alpha},
\end{equation*}
this proves \eqref{F-P-O(xi)_2} with the constant $C_8 = \frac{2 (\pi+1) d_0^2}{9\pi} \left( C_3 (\pi \sqrt{d})^{2-\alpha} + C_4 \right)$.

To calculate the integrals in   \eqref{G1==} and \eqref{Gjk=} we decompose the resolvent of the operator
$\mathbb{A}(\mathbf{0})$:
\begin{equation}
\label{4.10}
R_{0}(\zeta)=R_{0}(\zeta)P+R_{0}(\zeta)P^{\bot}=-\frac{1}{\zeta}P+R_{0}(\zeta)P^{\bot},\ \ \zeta\in\Gamma,
\end{equation}
and substitute the expression on right-hand side of \eqref{4.10} for $R_{0}(\zeta)$ in the contour integral  \eqref{G1==}.
Since the operator-function  $R_{0}^{\bot}(\zeta):=R_{0}(\zeta)P^{\bot}$ is holomorphic  inside the contour $\Gamma$,
this yields
\begin{equation}
\label{3.45aa}
\begin{aligned}
{G}_{1}(\bxi) &=  \frac{1}{2\pi i}\oint_{\Gamma} \Bigl( I + (\zeta + \mu_+) \Bigl(-\frac{1}{\zeta}P+R_{0}^{\bot}(\zeta)\Bigr) \Bigr) \mathbb{T}(\bxi)
\Bigl(-\frac{1}{\zeta}P+R_{0}^{\bot}(\zeta)\Bigr) \zeta \,d\zeta
\\
&=  \frac{1}{2\pi i}\oint_{\Gamma} \frac{1}{\zeta} \mu_+ P \mathbb{T}(\bxi)  P \,d\zeta =
 \mu_+ P \mathbb{T}(\bxi)  P;
\end{aligned}
  \end{equation}
compare with  \cite[Proposition 3.5]{JPSS24}.

The integral in \eqref{Gjk=} can be calculated in a similar way:
\begin{equation}
\label{3.45a}
\begin{aligned}
{G}_{jk} &= - \frac{1}{2\pi i}\oint_{\Gamma} \Bigl( I + (\zeta + \mu_+) \Bigl(-\frac{1}{\zeta}P+R_{0}^{\bot}(\zeta)\Bigr) \Bigr) \mathbb{T}_j  \Bigl( I + (\zeta + \mu_+) \Bigl(-\frac{1}{\zeta}P+R_{0}^{\bot}(\zeta)\Bigr) \Bigr)
\\
& \qquad \qquad  \times \mathbb{T}_k \Bigl(-\frac{1}{\zeta}P+R_{0}^{\bot}(\zeta)\Bigr) \zeta \,d\zeta.
\end{aligned}
  \end{equation}
  Notice that
\begin{equation}
\label{3.47}
P \mathbb{T}_j P = 0,\quad j=1,\dots,d.
  \end{equation}
Indeed, according to \eqref{2.36_new},  we have  $a^{(j)}[ \1_\Omega, \1_\Omega] = 0$,  and thus
$$
P \mathbb{T}_j P = ( \mathbb{T}_j \1_\Omega, \1_\Omega)_{L_2(\Omega)} P =
a^{(j)}[ \1_\Omega, \1_\Omega] P = 0.
$$
By \eqref{3.45a} and \eqref{3.47} we obtain
\begin{equation}
\label{3.48}
\begin{aligned}
{G}_{jk} &= - \frac{1}{2\pi i}\oint_{\Gamma}
 \frac{1}{\zeta} \bigl(\mu_+ P \mathbb{T}_j  P^\perp \mathbb{T}_k P +
 \mu_+^2 P \mathbb{T}_j  R_0^\perp(\zeta) \mathbb{T}_k P \bigr) \,d\zeta
\\
&= - \mu_+ P \mathbb{T}_j  P^\perp \mathbb{T}_k P - \mu_+^2 P \mathbb{T}_j  R_0^\perp(0) \mathbb{T}_k P.
\end{aligned}
  \end{equation}
Finally, the representation \eqref{F= P+ F1_2}, \eqref{Gjk==} follows from \eqref{AF==}, \eqref{3.45aaa}, \eqref{3.45aa} and \eqref{3.48}.
\end{proof}

\begin{remark}
\label{rem3.6}
Considering the definition of the operator ${\mathbb T}(\bxi)$ in {\rm \eqref{L2.6_2}} and the fact that  $P$ is the orthogonal projection on $\Ker \A(\mathbf{0})$, we conclude that
$a(\mathbf{0}) [{\mathbb T}(\bxi)P u, Pv] =0$. Therefore,
\begin{equation}
\label{remm1}
a(\bxi)[Pu,Pv] - a(\mathbf{0})[Pu,Pv] = \mu_+ ( {\mathbb T}(\bxi) Pu, Pv)_{L_2(\Omega)}, \quad u,v \in L_2(\Omega).
  \end{equation}
  This implies, in particular, that the operator $\mu_+ P {\mathbb T}(\bxi) P$ is bounded in $L_2(\Omega)$
  and generated by the form on the left-hand side of \eqref{remm1}.
 \end{remark}

\begin{proposition}
\label{prop3.8}
For the operators $G_{jk},$ $j,k=1,\dots,d,$ defined in  \eqref{Gjk==} the following representation is valid:
\begin{align}
\label{pr3.8_1}
G_{jk} &= g_{jk} P, \quad j,k=1,\dots,d,
  \\
\label{pr3.8_2}
g_{jk} &=  \frac{1}{2} \int_{\R^d} d\y \int_{\Omega} d\x\, \mu(\x,\y)  \frac{(x_j-y_j)}{|\x-\y|^{d+\alpha}}
 \left(  v_k(\x)   -  v_k(\y)\right),\quad j,k=1,\dots,d;
  \end{align}
here the functions $v_k \in \wt{H}^\gamma(\Omega)$ are centered,  $\int_\Omega v_k(\x)\,d\x =0$, and satisfy the integral identity
  \begin{equation}
\label{vk_problem}
\int_{\R^d} d\y \int_\Omega d\x \frac{\mu(\x,\y)}{|\x - \y|^{d+\alpha}}
\bigl( v_k(\x) - v_k(\y) +x_k - y_k \bigr) \bigl( \overline{\eta(\x) - \eta(\y)} \bigr)= 0, \quad \eta \in \wt{H}^\gamma(\Omega).
  \end{equation}
\end{proposition}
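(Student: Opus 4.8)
The plan is to reduce the statement to a scalar computation on the one–dimensional subspace $\mathfrak N=\mathcal L\{\1_\Omega\}$ and to recognize the auxiliary problem \eqref{vk_problem} along the way. First, every summand in \eqref{Gjk==} is of the form $P(\cdots)P$; hence $\Ran G_{jk}\subset\mathfrak N$ and $G_{jk}$ depends on its argument only through $(\cdot,\1_\Omega)_{L_2(\Omega)}$, so automatically $G_{jk}=g_{jk}P$ with $g_{jk}=(G_{jk}\1_\Omega,\1_\Omega)_{L_2(\Omega)}$ (recall $\|\1_\Omega\|_{L_2(\Omega)}=1$). It remains to compute this number and to match it with \eqref{pr3.8_2}. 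I introduce $\ell_k(\x):=x_k$ (entering only through the admissible differences $\ell_k(\x)-\ell_k(\y)=x_k-y_k$), $w_k:=\mathbb T_k\1_\Omega\in\mathfrak D$, and $\psi_k:=R_0^\perp(0)w_k=\A(\mathbf 0)^{-1}w_k\in\mathfrak N^\perp\cap\Dom\A(\mathbf 0)$; by \eqref{3.47}, $P\mathbb T_k\1_\Omega=P\mathbb T_kP\1_\Omega=0$, so in fact $w_k\in\mathfrak N^\perp$ and $P^\perp w_k=w_k$.

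The key step is to polarize the quadratic form $a^{(k)}$ from \eqref{2.36_new} on the pair $(\1_\Omega,\eta)$, $\eta\in\wt H^\gamma(\Omega)$. Substituting $u=\1_\Omega+t\eta$ and $u=\1_\Omega+it\eta$, extracting the coefficient of $t$ in $\operatorname{Re}\bigl(iu(\x)\overline{u(\y)}\bigr)$, and recombining the resulting real and imaginary parts (using $\operatorname{Re}(is)=-\operatorname{Im}s$ and $\operatorname{Im}s+i\operatorname{Re}s=i\overline{s}$ with $s=\eta(\x)-\eta(\y)$), I obtain
\begin{equation*}
a^{(k)}[\1_\Omega,\eta]=\frac{1}{2}\,i\int_{\R^d}d\y\int_\Omega d\x\,\mu(\x,\y)\frac{x_k-y_k}{|\x-\y|^{d+\alpha}}\,\overline{\bigl(\eta(\x)-\eta(\y)\bigr)}=i\,a(\mathbf 0)[\ell_k,\eta],
\end{equation*}
where $a(\mathbf 0)[\ell_k,\eta]:=\frac{1}{2}\int_{\R^d}d\y\int_\Omega d\x\,\mu(\x,\y)(x_k-y_k)\overline{(\eta(\x)-\eta(\y))}\,|\x-\y|^{-d-\alpha}$, the integrals being treated with the same splitting ($b_1$ near the diagonal, $b_2$ away from it) and moment bounds as in Lemmata \ref{lem2.2_new}, \ref{lem2.3_new}. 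By the definition of $\mathbb T_k$ this reads $(w_k,\eta)_{\mathfrak D}=a^{(k)}[\1_\Omega,\eta]=i\,a(\mathbf 0)[\ell_k,\eta]$ for all $\eta\in\mathfrak D$.

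Now I produce the correctors by setting $v_k:=i\bigl(w_k+\mu_+\psi_k\bigr)$; then $v_k\in\wt H^\gamma(\Omega)$ and $v_k\in\mathfrak N^\perp$, i.e.\ $\int_\Omega v_k\,d\x=0$. To verify \eqref{vk_problem} I use $a(\mathbf 0)[\psi_k,\eta]=(\A(\mathbf 0)\psi_k,\eta)_{L_2(\Omega)}=(w_k,\eta)_{L_2(\Omega)}$ and $a(\mathbf 0)[w_k,\eta]=(w_k,\eta)_{\mathfrak D}-\mu_+(w_k,\eta)_{L_2(\Omega)}=i\,a(\mathbf 0)[\ell_k,\eta]-\mu_+(w_k,\eta)_{L_2(\Omega)}$; the $L_2(\Omega)$–terms cancel, so $a(\mathbf 0)[w_k+\mu_+\psi_k,\eta]=i\,a(\mathbf 0)[\ell_k,\eta]$ and hence $a(\mathbf 0)[v_k,\eta]=-a(\mathbf 0)[\ell_k,\eta]$ for all $\eta\in\wt H^\gamma(\Omega)$, which is exactly the integral identity \eqref{vk_problem}. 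The solution of \eqref{vk_problem} in $\mathfrak N^\perp$ is unique — whence the matrix $g^0$ is well defined — by the coercivity of $a(\mathbf 0)$ on $\mathfrak N^\perp$ furnished by Lemma \ref{lem1.2} and Proposition \ref{prop1.5}.

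Finally I evaluate $g_{jk}$. Since $R_0^\perp(0)w_k=\psi_k$, $P^\perp w_k=w_k$, $(v,\1_\Omega)_{L_2(\Omega)}=\mu_+^{-1}(v,\1_\Omega)_{\mathfrak D}$ for every $v\in\mathfrak D$ (because $a(\mathbf 0)[v,\1_\Omega]=0$), and $\mathbb T_j$ is self-adjoint in $\mathfrak D$,
\begin{equation*}
g_{jk}=-\mu_+\bigl(\mathbb T_jw_k,\1_\Omega\bigr)_{L_2(\Omega)}-\mu_+^2\bigl(\mathbb T_j\psi_k,\1_\Omega\bigr)_{L_2(\Omega)}=-\bigl(w_k,w_j\bigr)_{\mathfrak D}-\mu_+\bigl(\psi_k,w_j\bigr)_{\mathfrak D}=-\bigl(w_k+\mu_+\psi_k,w_j\bigr)_{\mathfrak D}=i\,(v_k,w_j)_{\mathfrak D},
\end{equation*}
using $w_k+\mu_+\psi_k=-iv_k$. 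Conjugate symmetry of $(\cdot,\cdot)_{\mathfrak D}$ together with the polarization identity give $(v_k,w_j)_{\mathfrak D}=\overline{(w_j,v_k)_{\mathfrak D}}=\overline{i\,a(\mathbf 0)[\ell_j,v_k]}=-i\,a(\mathbf 0)[v_k,\ell_j]$, so $g_{jk}=a(\mathbf 0)[v_k,\ell_j]$, and since $\ell_j(\x)-\ell_j(\y)=x_j-y_j$ is real, this is precisely the right–hand side of \eqref{pr3.8_2}. The main obstacle is the polarization of $a^{(k)}[\1_\Omega,\cdot]$ in the second paragraph, together with the careful passage between the two inner products: $P$, $P^\perp$, $R_0^\perp(0)$ act in $L_2(\Omega)$ whereas $\mathbb T_j$ is self-adjoint only in $\mathfrak D$, so each transition between them must use $(v,\1_\Omega)_{L_2(\Omega)}=\mu_+^{-1}(v,\1_\Omega)_{\mathfrak D}$ and the $\mathfrak D$–self-adjointness of $\mathbb T_j$; the remaining convergence checks for the unfolded integrals are routine given Section 2.
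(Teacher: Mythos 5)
Your proposal is correct and follows essentially the same route as the paper: you reduce $G_{jk}$ to a scalar multiple of $P$, introduce the very same corrector $v_k = i\bigl(P^\perp + \mu_+ R_0^\perp(0)\bigr)\mathbb{T}_k \1_\Omega$, and use the relation $(\mathbb{T}_k u,\eta)_{\mathfrak D}=a^{(k)}[u,\eta]$ together with $\A(\mathbf{0})\1_\Omega=0$ to identify both $g_{jk}$ with \eqref{pr3.8_2} and the variational problem \eqref{vk_problem}. The only differences are cosmetic (you evaluate $g_{jk}$ through $(v_k,w_j)_{\mathfrak D}$ and conjugate symmetry instead of computing $i\,a^{(j)}[v_k,\1_\Omega]$ directly, and you verify \eqref{vk_problem} at once for all test functions rather than first for zero-mean ones), so no further comparison is needed.
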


\begin{proof}[Proof]
According to \eqref{Gjk==} we have
\begin{align}
\label{pr3.8_3}
G_{jk} =& - \mu_+ P \mathbb{T}_j  \bigl( P^\perp + \mu_+ R_0^\perp(0)\bigr) \mathbb{T}_k P = g_{jk} P,
 \\
 \label{pr3.8_4}
 g_{jk} :=& - \mu_+ \bigl(\mathbb{T}_j  \bigl( P^\perp + \mu_+ R_0^\perp(0)\bigr) \mathbb{T}_k \1_\Omega, \1_\Omega
 \bigr)_{L_2(\Omega)}=  i \mu_+ (\mathbb{T}_j  v_k, \1_\Omega  )_{L_2(\Omega)};
 \end{align}
here we used the notation
 \begin{equation}
 \label{pr3.8_5}
  v_k := i \bigl( P^\perp + \mu_+ R_0^\perp(0)\bigr) \mathbb{T}_k \1_\Omega.
 \end{equation}

Observe that the sesquilinear form   $a^{(j)} [v,w]$ corresponding to the quadratic form
 \eqref{2.36_new} is given by
 \begin{equation}
\label{pr3.8_6}
a^{(j)} [v,w] := - \frac{1}{2} \int_{\R^d} d\y \int_{\Omega} d\x\, \mu(\x,\y)  \frac{(x_j-y_j)}{|\x-\y|^{d+\alpha}}
 \left( i v(\x) \overline{w(\y)}  - i v(\y) \overline{w(\x)}\right) ,\quad v,w \in \wt{H}^\gamma(\Omega).
\end{equation}
By the definition of the operator $\mathbb{T}_j$,
$$
a(0)[\mathbb{T}_j v_k, \1_\Omega] + \mu_+ (\mathbb{T}_j v_k, \1_\Omega)_{L_2(\Omega)}
= a^{(j)}[v_k, \1_\Omega].
$$
Since $\A(\mathbf{0})\1_\Omega =0$, the first term on the left-hand side vanishes. Therefore,
$$
 \mu_+ (\mathbb{T}_j v_k, \1_\Omega)_{L_2(\Omega)}
= a^{(j)}[v_k, \1_\Omega].
$$
From this relation and \eqref{pr3.8_4}, \eqref{pr3.8_6} we deduce that
$$
g_{jk} = i a^{(j)}[v_k, \1_\Omega] =
 \frac{1}{2} \int_{\R^d} d\y \int_{\Omega} d\x\, \mu(\x,\y)  \frac{(x_j-y_j)}{|\x-\y|^{d+\alpha}}
 \left(  v_k(\x)   -  v_k(\y)\right),\quad j,k=1,\dots,d.
$$
Then \eqref{pr3.8_1} and \eqref{pr3.8_2} follow from the latter relation and \eqref{pr3.8_3}.

It remains to determine which problem the function \eqref{pr3.8_5} solves.
By its definition,
$v_k \in \wt{H}^\gamma(\Omega)$, $\int_\Omega v_k(\x)\,d\x =0$, and for any test function
 $\eta \in \wt{H}^\gamma(\Omega)$, $\int_\Omega \eta(\x)\,d\x =0$, it holds
$$
\begin{aligned}
a(\mathbf{0})[v_k, \eta] =  i a(\mathbf{0})[P^\perp \mathbb{T}_k \1_\Omega, \eta]
+ i \mu_+ a(\mathbf{0})[R_0^\perp(0) \mathbb{T}_k \1_\Omega, \eta]
\\
=  i a(\mathbf{0})[ \mathbb{T}_k \1_\Omega, \eta] + i \mu_+ \bigl(\mathbb{T}_k \1_\Omega, \eta \bigr)_{L_2(\Omega)}
= i a^{(k)}[\1_\Omega, \eta].
\end{aligned}
$$
Here we used the evident relation $a(\mathbf{0})[P \mathbb{T}_k \1_\Omega, \eta] =0$ and the definition of the operator $\mathbb{T}_k$.
Rewriting the relation $a(\mathbf{0})[v_k, \eta] = i a^{(k)}[\1_\Omega, \eta]$ in the detailed form leads to the integral  identity in
\eqref{vk_problem} for all test functions $\eta \in \wt{H}^\gamma(\Omega)$, $\int_\Omega \eta(\x)\,d\x =0$.
In fact, this identity is satisfied for all $\eta \in \wt{H}^\gamma(\Omega)$, since its validity for $\eta = \1_\Omega$ is
evident.
\end{proof}

\subsection{Study of the operator  $\mu_+ P  \mathbb{T}(\bxi)  P$}

Introducing the notation
\begin{equation}\label{2.31}
\begin{aligned}
\rho(\bxi) = a(\bxi)[{\mathbf 1}_\Omega,{\mathbf 1}_\Omega] - a(\mathbf{0})[{\mathbf 1}_\Omega,{\mathbf 1}_\Omega] =a(\bxi)[{\mathbf 1}_\Omega,{\mathbf 1}_\Omega]=
\intop_{\R^d} d\y \intop_\Omega d\x\, \mu(\x,\y) \frac{ 1 - \cos(\langle \bxi, \x -\y\rangle)}{|\x -\y |^{d+\alpha}}
 \end{aligned}
  \end{equation}
  and considering Remark  \ref{rem3.6} and the relation $P = (\cdot, \mathbf{1}_\Omega)  \mathbf{1}_\Omega$, we obtain
\begin{equation}\label{2.30}
 \mu_+ P  \mathbb{T}(\bxi)  P = \rho(\bxi) P.
  \end{equation}
The Fourier series of the periodic function $\mu(\x,\y)$ reads
\begin{equation}\label{2.32}
\mu(\x,\y) = \sum_{\m, \mathbf{l} \in \Z^d} \wh{\mu}_{\m, \mathbf{l}} e^{2\pi i (\langle \m, \x\rangle + \langle \mathbf{l}, \y\rangle)},
  \end{equation}
  where the coefficients $ \wh{\mu}_{\m, \mathbf{l}} $ are given by the expressions
  $$
  \wh{\mu}_{\m, \mathbf{l}} = \int_\Omega \int_\Omega d\x\, d\y\, \mu(\x,\y) e^{-2\pi i (\langle \m, \x\rangle + \langle \mathbf{l}, \y\rangle)},\quad \m, \mathbf{l} \in \Z^d.
  $$
  The symmetry condition $\mu(\x,\y) = \mu(\y,\x)$ implies that
$ \wh{\mu}_{\m, \mathbf{l}} =  \wh{\mu}_{ \mathbf{l}, \m}$, $\m, \mathbf{l} \in \Z^d$.
Denote
\begin{equation}\label{2.33}
\mu^0  :=  \wh{\mu}_{\mathbf{0}, \mathbf{0}} = \int_\Omega \int_\Omega d\x\, d\y\, \mu(\x,\y).
  \end{equation}
We formulate here the statement that was proved in \cite[Lemmata 3.7 and 3.8]{JPSS24}.
\begin{lemma}[\cite{JPSS24}]
\label{lem2.6}
Let conditions  \eqref{e1.1}, \eqref{e1.2} be satisfied, and assume that $1< \alpha< 2$.
Then the function  $\rho(\bxi)$ defined in \eqref{2.31} admits the following two-term expansion{\rm :}
\begin{align}
\nonumber
\rho(\bxi)& = \mu^0 c_0(d,\alpha) |\bxi|^\alpha +  \rho_*(\bxi),
\\
\label{2.35aa}
\rho_*(\bxi) &= \int_{\R^d}   \mu_*(\z)
 \frac{ 1 - \cos (\langle \bxi, \z\rangle)}{|\z |^{d+\alpha}}\,d\z,
 \end{align}
 where $\mu^0$ is defined in \eqref{2.33},  and $\mu_*$ is an even,  $\Z^d$-periodic, zero average function given by
$\mu_*(\z) := \int_\Omega \mu(\x,\z+\x) \,d\x - \mu^0$.
The function $\mu_*$ satisfies the estimates
\begin{equation*}
 \mu_- - \mu^0 \le \mu_*(\z) \le \mu_+ - \mu^0,
  \end{equation*}
and its Fourier series reads
\begin{equation*}
\mu_*(\z) = \sum_{\m \in \Z^d \setminus \{ \mathbf{0} \} } \wh{\mu}_{\m,-\m}  \cos (2\pi  \langle\m,\z\rangle ).
\end{equation*}
Moreover, the following upper bound holds for all $\bxi \in \wt{\Omega}${\rm :}
\begin{equation*}
|\rho_*(\bxi) | \le C_9 |\bxi|^2.
  \end{equation*}
  Here the constant $C_9$ depends only on  $d$, $\alpha$,  $\mu_+$.
\end{lemma}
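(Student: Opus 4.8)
Starting from the integral representation of $\rho(\bxi)$ already recorded in \eqref{2.31}, I would change variables $\z=\y-\x$ in the outer integral. The integrand of \eqref{2.31} behaves like $|\z|^{2-d-\alpha}$ near $\z=\mathbf 0$ and like $|\z|^{-d-\alpha}$ at infinity, both integrable since $1<\alpha<2$, so Fubini applies and the $\x$-integration may be carried out first; by the $\Z^d$-periodicity of $\mu$ in its second argument, $\int_\Omega\mu(\x,\x+\z)\,d\x$ depends only on $\z$, so $\rho(\bxi)=\int_{\R^d}\Psi(\z)\,\frac{1-\cos\langle\bxi,\z\rangle}{|\z|^{d+\alpha}}\,d\z$ with $\Psi(\z):=\int_\Omega\mu(\x,\x+\z)\,d\x$. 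A further use of periodicity shows $\int_\Omega\Psi(\z)\,d\z=\int_\Omega\!\int_\Omega\mu(\x,\y)\,d\x\,d\y=\mu^0$, so writing $\Psi=\mu^0+\mu_*$ with $\mu_*$ exactly as in the statement, and evaluating $\int_{\R^d}\frac{1-\cos\langle\bxi,\z\rangle}{|\z|^{d+\alpha}}\,d\z$ by rotating $\bxi/|\bxi|$ onto the first coordinate axis and rescaling $\z\mapsto|\bxi|^{-1}\z$ — which reduces it to $c_0(d,\alpha)|\bxi|^\alpha$ via \eqref{e1.6a} — I obtain the splitting $\rho(\bxi)=\mu^0 c_0(d,\alpha)|\bxi|^\alpha+\rho_*(\bxi)$ with $\rho_*$ as in \eqref{2.35aa}.

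\textbf{Step 2: elementary properties of $\mu_*$.} Evenness follows from $\mu(\x,\y)=\mu(\y,\x)$ and the substitution $\x\mapsto\x+\z$ in $\int_\Omega\mu(\x,\x-\z)\,d\x$; the $\Z^d$-periodicity of $\mu_*$ is immediate from that of $\mu(\x,\cdot)$; the vanishing mean is the identity $\int_\Omega\Psi=\mu^0$ from Step~1; and $\mu_-\le\mu\le\mu_+$ together with $|\Omega|=1$ gives $\mu_--\mu^0\le\mu_*(\z)\le\mu_+-\mu^0$. For the Fourier series I would insert \eqref{2.32} into the definition of $\mu_*$ and integrate in $\x$ over $\Omega$; orthogonality retains only the modes with $\m+\mathbf l=\mathbf 0$, leaving $\sum_{\mathbf l\ne\mathbf 0}\wh\mu_{-\mathbf l,\mathbf l}e^{2\pi i\langle\mathbf l,\z\rangle}$, and pairing $\mathbf l$ with $-\mathbf l$ while using $\wh\mu_{\m,\mathbf l}=\wh\mu_{\mathbf l,\m}$ and the reality of $\mu$ (which together make each $\wh\mu_{-\mathbf l,\mathbf l}=\wh\mu_{\mathbf l,-\mathbf l}$ real) converts this into the asserted cosine series.

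\textbf{Step 3: the estimate $|\rho_*(\bxi)|\le C_9|\bxi|^2$ — the main point.} The elementary bound $0\le 1-\cos\langle\bxi,\z\rangle\le\tfrac12|\bxi|^2|\z|^2$, fed into $\rho_*$, controls the contribution of a fixed ball $B_{R_0}(\mathbf 0)$ by $\frac{\mu_+\omega_d R_0^{2-\alpha}}{2(2-\alpha)}|\bxi|^2$, but over the complement it only yields $O(|\bxi|^\alpha)$, so the cancellation $\int_\Omega\mu_*=0$ must be exploited. Choosing $R_0$ so that every translated cell $\Omega+\n$ meeting $B_1(\mathbf 0)$ is contained in $B_{R_0}(\mathbf 0)$, I would decompose the complement into the remaining cells $\Omega+\n$, on each of which $g(\z):=\frac{1-\cos\langle\bxi,\z\rangle}{|\z|^{d+\alpha}}$ is smooth; subtracting from $g$ on the cell its value at the cell centre — admissible because $\mu_*$ has zero mean over a period — gives $\bigl|\int_{\Omega+\n}\mu_* g\bigr|\le\mu_+\sqrt d\,\sup_{\Omega+\n}|\nabla g|$. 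A direct computation using $|\sin t|\le|t|$ and $1-\cos t\le\tfrac12 t^2$ yields $|\nabla g(\z)|\le\bigl(1+\tfrac{d+\alpha}{2}\bigr)|\bxi|^2|\z|^{1-d-\alpha}$ for every $\z\ne\mathbf 0$, and since $|\z|\ge c|\n|$ on $\Omega+\n$ the sum over $\n$ is bounded by $C(d,\alpha)\,\mu_+|\bxi|^2\sum_{\n\ne\mathbf 0}|\n|^{1-d-\alpha}$, a series that converges precisely because $\alpha>1$. Adding the two contributions gives the bound with a constant $C_9$ depending only on $d$, $\alpha$, $\mu_+$.

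\textbf{The main obstacle.} The one genuinely nontrivial point is this last estimate: a pointwise bound on $1-\cos$ loses the target power in the tail ($|\bxi|^\alpha$ instead of $|\bxi|^2$), and the missing factor $|\bxi|^{2-\alpha}$ has to be recovered from the zero mean of $\mu_*$. An alternative route expands $\mu_*$ in its cosine series and uses $\int_{\R^d}\cos(2\pi\langle\m,\z\rangle)\frac{1-\cos\langle\bxi,\z\rangle}{|\z|^{d+\alpha}}\,d\z=\tfrac12 c_0(d,\alpha)\bigl(|2\pi\m-\bxi|^\alpha+|2\pi\m+\bxi|^\alpha-2|2\pi\m|^\alpha\bigr)$, so that $\rho_*(\bxi)$ becomes a weighted sum of second-order differences of $|t|^\alpha$; but this requires $\sum_{\m\ne\mathbf 0}|\wh\mu_{\m,-\m}|\,|\m|^{\alpha-2}<\infty$, which is not apparent from $\mu\in L_\infty$ alone in higher dimensions, whereas the cell decomposition above sidesteps this since it leads to $\sum|\n|^{1-d-\alpha}$ rather than $\sum|\m|^{\alpha-2}$. (In the present paper the statement is in any case imported from \cite{JPSS24}, so only a reference is required there.)
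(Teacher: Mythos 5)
Your proof is correct, and each step checks out: the change of variables yielding $\rho(\bxi)=\int_{\R^d}\bigl(\mu^0+\mu_*(\z)\bigr)\frac{1-\cos\langle\bxi,\z\rangle}{|\z|^{d+\alpha}}\,d\z$ together with \eqref{e1.6a} gives the two-term expansion, the properties of $\mu_*$ follow as you say, and for the tail the cell-wise subtraction justified by $\int_{\Omega+\n}\mu_*=0$ combined with the bound $|\nabla g(\z)|\le(1+\tfrac{d+\alpha}{2})|\bxi|^2|\z|^{1-d-\alpha}$ and $\sum_{\n\ne\mathbf 0}|\n|^{1-d-\alpha}<\infty$ (here $\alpha>1$ is used) recovers the missing factor $|\bxi|^{2-\alpha}$ and yields $C_9=C(d,\alpha)\mu_+$, since $|\mu_*|\le\mu_+$. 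The present paper does not prove this lemma (it is imported from \cite{JPSS24}), but your argument is essentially the same device the paper itself deploys for the sharper expansion in Lemma \ref{lem3.10}: a decomposition into lattice cells split into a bounded family near the origin and the rest, with the zero mean of $\mu_*$ exploited on each far cell by subtracting the kernel value at a point of the cell (the paper subtracts it at the lattice point $\n$ rather than at the cell centre) and a mean-value bound on the kernel's gradient; the only cosmetic slip in your write-up is that the far region should be described as the union of the cells disjoint from $B_1(\mathbf 0)$ rather than the complement of a ball, which does not affect the estimate.
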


Next we are going to extract the quadratic terms of the function $\rho_*(\bxi)$ and estimate the remainder.
\begin{lemma}
\label{lem3.10}
Let conditions \eqref{e1.1}, \eqref{e1.2} be fulfilled, and assume that $1< \alpha< 2$.
Then for all $\bxi \in \wt{\Omega}$ the function $\rho_*(\bxi)$ defined in \eqref{2.35aa} admits the representation
\begin{equation*}
\rho_*(\bxi) = \langle g_* \bxi, \bxi \rangle + \wt{\rho}_*(\bxi);
  \end{equation*}
here the quadratic form  $\langle g_* \bxi, \bxi \rangle$ is given by the expression
\begin{equation*}
\langle g_* \bxi, \bxi \rangle  = \int_{\R^d} \frac{\mu_*(\z) \langle \bxi, \z \rangle^2 }{2|\z|^{d+\alpha}} \,d\z,
  \end{equation*}
and the integral on the right-hand side is understood in the following sense{\rm :}
\begin{equation*}
 \int_{\R^d} \frac{\mu_*(\z) \langle \bxi, \z \rangle^2 }{2|\z|^{d+\alpha}} \,d\z := \sum_{\n \in \Z^d}
  \int_{\Omega + \n} \frac{\mu_*(\z) \langle \bxi, \z \rangle^2 }{2|\z|^{d+\alpha}} \,d\z.
  \end{equation*}
  The sum on the right-hand side converges absolutely. The following estimates hold{\rm :}
\begin{align}
\label{3.65a}
|g_*| &\le c_4(d,\alpha) \mu_+,
\\
\label{3.66}
| \wt{\rho}_*(\bxi)| &\le C_{10} |\bxi|^{1+\alpha}, \quad  \bxi \in \wt{\Omega}.
  \end{align}
  The constant $C_{10}$  depends only on $d$, $\alpha$,  $\mu_+$.
\end{lemma}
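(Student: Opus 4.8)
The plan is to extract the part of $1-\cos\lambda$ that is quadratic in $\lambda$ and to control the (only conditionally convergent) behaviour at infinity by summing over the cells $\Omega+\n$, $\n\in\Z^d$, using that $\mu_*$ has zero mean over each cell. I would start from the elementary identity $1-\cos\lambda=\tfrac{\lambda^2}{2}-r(\lambda)$ with $r(\lambda):=\tfrac{\lambda^2}{2}-1+\cos\lambda$; since $r(0)=r'(0)=r''(0)=r'''(0)=0$ and $r$ is smooth, this gives $|r(\lambda)|\le C\min\{\lambda^2,\lambda^4\}$ and $|r'(\lambda)|=|\lambda-\sin\lambda|\le C\min\{|\lambda|,|\lambda|^3\}$ for all $\lambda\in\R$. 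Substituting $\lambda=\langle\bxi,\z\rangle$ in \eqref{2.35aa} suggests defining $\langle g_*\bxi,\bxi\rangle:=\sum_{\n}\int_{\Omega+\n}\mu_*(\z)\langle\bxi,\z\rangle^2\,(2|\z|^{d+\alpha})^{-1}\,d\z$ and $\wt{\rho}_*(\bxi):=-\sum_{\n}\int_{\Omega+\n}\mu_*(\z)\,r(\langle\bxi,\z\rangle)\,|\z|^{-d-\alpha}\,d\z$, and it then remains to make sense of the first series, to recognize $g_*$ as a symmetric matrix, and to bound the second series by $C|\bxi|^{1+\alpha}$.

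To handle $g_*$ I would split the lattice into the $2^{d}$ cells $\Omega+\n$ with all $n_j\in\{-1,0\}$, whose union is $[-1,1)^{d}$, and the rest. On $[-1,1)^{d}$ the integrand is dominated by $|\mu_*(\z)|\,|\z|^{2-d-\alpha}$, which is integrable because $\alpha<2$, so these finitely many terms are harmless. On any other cell one has $|\z|\ge1$ and $|\z|\asymp|\n|$, so I would use $\int_{\Omega+\n}\mu_*=0$, subtract the value of $z_jz_k|\z|^{-d-\alpha}$ at a fixed point of the cell, and estimate by the supremum of the gradient: the cell term is then $\le C\mu_+\,|\n|^{1-d-\alpha}$, and $\sum_{|\n|\ge1}|\n|^{1-d-\alpha}<\infty$ precisely because $\alpha>1$. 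This proves absolute convergence of the series for $(g_*)_{jk}=\tfrac12\sum_{\n}\int_{\Omega+\n}\mu_*(\z)\,z_jz_k\,|\z|^{-d-\alpha}\,d\z$, the symmetry of $g_*$, and the bound $|g_*|\le c_4(d,\alpha)\mu_+$, i.e.\ \eqref{3.65a}. Since $\rho_*$ itself converges absolutely as an integral over $\R^d$ ($1-\cos\lambda\le\tfrac12\lambda^2$ near the origin, where $d+\alpha-2<d$; $|1-\cos\lambda|\le2$ at infinity, where $d+\alpha>d$), I may subtract the two cell sums term by term and obtain $\rho_*(\bxi)=\langle g_*\bxi,\bxi\rangle+\wt{\rho}_*(\bxi)$ with $\wt{\rho}_*$ as above.

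For the bound on $\wt{\rho}_*$ I would first dispose of $1\le|\bxi|\le\pi\sqrt d$ trivially, since there $|\wt{\rho}_*(\bxi)|\le|\rho_*(\bxi)|+|\langle g_*\bxi,\bxi\rangle|\le C\mu_+|\bxi|^2\le C\mu_+|\bxi|^{1+\alpha}$. For $|\bxi|\le1$ I would split the cells into (i) the $2^{d}$ cells tiling $[-1,1)^{d}$; (ii) the remaining cells with $|\n|\le1/|\bxi|$; (iii) the remaining cells with $|\n|>1/|\bxi|$. On (i), where $|\langle\bxi,\z\rangle|\le\sqrt d$, I use $|r(\langle\bxi,\z\rangle)|\le C|\bxi|^{4}|\z|^{4}$, so the contribution is $\le C\mu_+|\bxi|^{4}\int_{[-1,1)^{d}}|\z|^{4-d-\alpha}\,d\z\le C\mu_+|\bxi|^{4}\le C\mu_+|\bxi|^{1+\alpha}$. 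On (ii) and (iii) I again use $\int_{\Omega+\n}\mu_*=0$ and bound by the gradient of $r(\langle\bxi,\z\rangle)|\z|^{-d-\alpha}$ on the cell. On (ii), where $|\langle\bxi,\z\rangle|\le1+\sqrt d$ is bounded, $|r|\le C|\bxi|^{4}|\z|^{4}$ and $|r'|\le C|\bxi|^{3}|\z|^{3}$ give a cell term $\le C\mu_+|\bxi|^{4}|\n|^{3-d-\alpha}$, and $\sum_{1\le|\n|\le1/|\bxi|}|\n|^{3-d-\alpha}\asymp\int_1^{1/|\bxi|}r^{2-\alpha}\,dr\asymp|\bxi|^{\alpha-3}$ (as $2-\alpha>0$), for a total $\le C\mu_+|\bxi|^{1+\alpha}$. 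On (iii) the global bounds $|r|\le C|\bxi|^{2}|\z|^{2}$, $|r'|\le C|\bxi|\,|\z|$ give a cell term $\le C\mu_+|\bxi|^{2}|\n|^{1-d-\alpha}$, and $\sum_{|\n|>1/|\bxi|}|\n|^{1-d-\alpha}\asymp\int_{1/|\bxi|}^{\infty}r^{-\alpha}\,dr\asymp|\bxi|^{\alpha-1}$ (convergent since $\alpha>1$), again for a total $\le C\mu_+|\bxi|^{1+\alpha}$. Summing the three groups yields \eqref{3.66}, with $C_{10}$ depending only on $d$, $\alpha$, $\mu_+$.

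The hard part is that neither $g_*$ nor $\wt{\rho}_*$ is an absolutely convergent integral over $\R^d$: for $\alpha<2$ the weight $|\z|^{2-d-\alpha}$ is not integrable at infinity. Both the very definition of $g_*$ and the sharp exponent $1+\alpha$ (rather than the cruder $2$) depend on the cancellation $\int_{\Omega+\n}\mu_*=0$ combined with the borderline summability $\sum_{\n}|\n|^{1-d-\alpha}<\infty$, which holds exactly when $\alpha>1$; and the cutoff between groups (ii) and (iii) at $|\n|\sim1/|\bxi|$ must be placed precisely there so that the two partial sums contribute the same power $|\bxi|^{1+\alpha}$.
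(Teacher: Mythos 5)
Your proof is correct, and it shares the structural skeleton of the paper's argument (split into near and far cells, use $\int_{\Omega+\n}\mu_*\,d\z=0$ on far cells together with a mean-value/gradient bound of the integrand over each cell), but it extracts the sharp exponent $1+\alpha$ by a genuinely different and more elementary mechanism. The paper expands $\sin\lambda$ and $1-\cos\lambda$ with fractional remainders of order $|\lambda|^{\alpha}$ and $|\lambda|^{1+\alpha}$ weighted by $(1+|\operatorname{ln}|\lambda||)^{-p}$, and then estimates the far-region contribution through an anisotropic decomposition aligned with $\bxi$ (the cylinder ${\mathcal C}_R$, the regions $\Xi_1,\Xi_2,\Xi_3$, logarithmic changes of variables, and a separate case for $|\bxi|$ not small). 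You instead use only the integer-power bounds $|r(\lambda)|\le C\min\{\lambda^2,\lambda^4\}$, $|r'(\lambda)|\le C\min\{|\lambda|,|\lambda|^3\}$ and split the far cells isotropically at the oscillation scale $|\n|\sim 1/|\bxi|$: the sum over $|\n|\le 1/|\bxi|$ gives $|\bxi|^4\cdot|\bxi|^{\alpha-3}$ (using $3-\alpha>0$) and the sum over $|\n|>1/|\bxi|$ gives $|\bxi|^2\cdot|\bxi|^{\alpha-1}$ (using $\alpha>1$), both equal to $|\bxi|^{1+\alpha}$; the regime $1\le|\bxi|\le\pi\sqrt d$ you dispose of trivially via the quadratic bound of Lemma \ref{lem2.6} and \eqref{3.65a}. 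Your treatment of $g_*$ (absolute convergence and \eqref{3.65a} via the same cancellation and $\sum_{\n\ne 0}|\n|^{1-d-\alpha}<\infty$) matches the paper's \eqref{3.79b}--\eqref{3.79c}. What each approach buys: yours is shorter, avoids the logarithmic weights and the direction-dependent slicing altogether, and still yields constants depending only on $d,\alpha,\mu_+$; the paper's route is self-contained in the far region (it never invokes the a priori quadratic bound for large $|\bxi|$) at the price of the heavier $\Xi_1,\Xi_2,\Xi_3$ analysis. The only points requiring a word of care in your write-up are that $|\z|\asymp|\n|$ on cells outside $[-1,1)^d$ holds only up to dimension-dependent constants, and that the gradient bound after the zero-mean subtraction must be taken over the closed cell (which contains the segments from $\n$ to its points); both are harmless.
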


\begin{proof}[Proof]
Let the number  $R = R(d,\alpha)$ be a positive root of the equation
$$
\Big(\frac{R}{R - \sqrt{d}}\Big)^{d+\alpha} = 2,
$$
then
$$
R(d,\alpha) = \frac{\sqrt{d} \,2^{\frac{1}{d+\alpha}} }{2^{\frac{1}{d+\alpha}}-1}.
$$
Denote by $\Sigma_0$ the set of indices $\{\n \in \Z^d\,:\, (\Omega+n)\cap B_R(\mathbf{0})\not=\emptyset\}$,
 and by $\Sigma_1$ -- its complement  $\Sigma_1 = \Z^d \setminus \Sigma_0$.
It is straightforward to check that $R >2 \sqrt{2d}$, and
\begin{equation}
\label{2.41}
 \frac{|\z_1|^{d+\alpha}}{|\z_2|^{d+\alpha}} \le 2 \ \ \ \hbox{for all }\  \n \in \Sigma_1, \quad \z_1,\z_2 \in \Omega + \n.
  \end{equation}

It is convenient to represent the function $\rho_*(\bxi)$  as the sum
\begin{equation}
\label{3.67}
{\rho}_*(\bxi) = {\rho}^{(0)}_*(\bxi) + {\rho}^{(1)}_*(\bxi)
  \end{equation}
with
\begin{align}
\label{3.68}
{\rho}^{(0)}_*(\bxi) := \sum_{\n \in \Sigma_0} \int_{\Omega + \n}  \mu_*(\z)
 \frac{ 1 - \cos (\langle \bxi, \z\rangle)}{|\z |^{d+\alpha}} \,d\z,
 \\
 \label{3.69}
{\rho}^{(1)}_*(\bxi) := \sum_{\n \in \Sigma_1} \int_{\Omega + \n}   \mu_*(\z)
 \frac{ 1 - \cos (\langle \bxi, \z\rangle)}{|\z |^{d+\alpha}} \,d\z.
\end{align}

Rewriting \eqref{3.68} as
\begin{equation}
\label{3.70}
{\rho}^{(0)}_*(\bxi) := \sum_{\n \in \Sigma_0} \int_{\Omega + \n}   \mu_*(\z)
 \frac{\langle \bxi, \z\rangle^2 }{2|\z |^{d+\alpha}}\,d\z + \wt{\rho}_*^{(0)}(\bxi),
  \end{equation}
and applying the obvious inequality
$$
\left|1 - \cos \lambda - \frac{\lambda^2}{2}\right| \le  \frac{\lambda^4}{24}, \quad \lambda \in \R,
$$
we conclude that, for all $\bxi\in\wt{\Omega}$, the remainder $\wt{\rho}_*^{(0)}(\bxi)$ admits the upper bound
\begin{equation*}
\bigl| \wt{\rho}^{(0)}_*(\bxi)\bigr| \le |\bxi|^4\! \sum_{\n \in \Sigma_0} \intop_{\Omega + \n} \!\!  |\mu_*(\z)|
 \frac{ d\z}{24|\z |^{d+\alpha -4}} \le |\bxi|^4  \mu_+\! \!\!\!\intop_{|\z| < R+ \sqrt{d}}\!\! \frac{ d\z}{24|\z |^{d+\alpha -4}}=
 |\bxi|^4  \mu_+ \frac{\omega_d (R+ \sqrt{d})^{4-\alpha}}{24(4-\alpha)}.
  \end{equation*}
  Thus
\begin{equation}
\label{3.71}
\bigl| \wt{\rho}^{(0)}_*(\bxi)\bigr| \le c_5(d,\alpha) \mu_+ |\bxi|^4, \quad \bxi \in \wt{\Omega},
\quad c_5(d,\alpha) = \frac{\omega_d (R+ \sqrt{d})^{4-\alpha}}{24(4-\alpha)}.
  \end{equation}

We turn to the function ${\rho}^{(1)}_*(\bxi)$ defined in  \eqref{3.69}. Letting
$$
\varphi_{\bxi}(\z) :=  \frac{ 1 - \cos (\langle \bxi, \z\rangle)}{|\z |^{d+\alpha}}
$$
and taking into account  $\Z^d$-periodicity of the function $\mu_*(\z)$ and the relation
$\int_\Omega \mu_*(\z)\,d\z =0$, we have
\begin{equation}
\label{3.72a}
\begin{aligned}
{\rho}^{(1)}_*(\bxi) &= \sum_{\n \in \Sigma_1} \int_{\Omega + \n}   \mu_*(\z) \varphi_{\bxi}(\z) \,d\z
= \sum_{\n \in \Sigma_1} \int_{\Omega + \n}   \mu_*(\z) \left(\varphi_{\bxi}(\z) - \varphi_{\bxi}(\n) \right) \,d\z
\\
&= \sum_{\n \in \Sigma_1} \int_{\Omega + \n}  d\z\, \mu_*(\z) \sum_{j=1}^d(z_j -n_j)
\int_0^1 \partial_j \varphi_{\bxi}(\n + t (\z-\n))\,dt .
\end{aligned}
 \end{equation}
 Since
$$
 \partial_j \varphi_{\bxi}(\z) =  \frac{\xi_j \sin (\langle \bxi, \z\rangle)}{|\z |^{d+\alpha}}
 - \frac{ (d+\alpha) z_j (1 - \cos (\langle \bxi, \z\rangle))}{|\z |^{d+\alpha +2}},
$$
then
\begin{equation}
\label{3.72}
\begin{aligned}
&{\rho}^{(1)}_*(\bxi) = \sum_{\n \in \Sigma_1} \intop_{\Omega + \n}  d\z\, \mu_*(\z)
\langle \z - \n, \bxi\rangle  \intop_0^1 \frac{\sin (\langle \bxi, \n + t (\z-\n)\rangle)}{|\n + t (\z-\n)|^{d+\alpha}}\,dt
\\
&- (d+\alpha) \sum_{\n \in \Sigma_1} \intop_{\Omega + \n}  d\z\, \mu_*(\z)
\sum_{j=1}^d(z_j -n_j)
\intop_0^1 \frac{(n_j + t (z_j - n_j))  (1 - \cos (\langle \bxi, \n + t (\z-\n)\rangle)}{|\n + t (\z-\n)|^{d+\alpha+2}}
\,dt.
\end{aligned}
  \end{equation}

For an arbitrary $p>1$ let us introduce the functions
$$
F_{s,p}(\lambda)=\frac{(\sin \lambda -  \lambda)(1 + |\operatorname{ln} |\lambda || )^p}{|\lambda|^\alpha }\ \ \hbox{and}\ \
F_{c,p}(\lambda)=\frac{\big(1-\cos \lambda -  \frac{\lambda^2}2\big)(1 + |\operatorname{ln} |\lambda || )^p}{|\lambda|^\alpha };
$$
for $\lambda=0$ we set $F_{s,p}(0)=F_{c,p}(0)=0$. By direct inspection we see that $F_{s,p}, F_{c,p} \in L_\infty(\R)$,
and
 \begin{align}
 \label{3.75}
 | F_{s,p}(\lambda)| \le \max \left\{ \sup_{0< x \le 1} \frac{x^{3-\alpha}}{6} (1 + |\operatorname{ln} x | )^p ,
 \sup_{1 \le  x  < \infty} \frac{(1 + x)}{x^\alpha} (1 + |\operatorname{ln} x | )^p
    \right\} =: \mathfrak{c}_s(p,\alpha), \quad \lambda \in \R,
    \\
 \label{3.76}
 | F_{c,p}(\lambda)| \le \max \left\{ \sup_{0< x \le 1} \frac{x^{3-\alpha}}{24} (1 + |\operatorname{ln} x | )^p ,
 \sup_{1 \le  x  < \infty} \frac{(4 + x^2)}{2 x^{1+\alpha}} (1 + |\operatorname{ln} x | )^p
    \right\} =: \mathfrak{c}_c(p,\alpha), \quad \lambda \in \R.
\end{align}
Moreover, by the definition of $F_{s,p}$ and $F_{c,p}$,
 \begin{align}
 \label{3.73}
\sin \lambda =  \lambda + \frac{|\lambda|^\alpha F_{s,p}(\lambda)}{(1 + |\operatorname{ln} |\lambda || )^p}, \quad \lambda \in \R,
\\
\label{3.74}
1 - \cos \lambda =  \frac{\lambda^2}{2} + \frac{|\lambda|^{1+\alpha} F_{c,p}(\lambda)}{(1 + |\operatorname{ln} |\lambda || )^p}, \quad \lambda \in \R.
\end{align}
Now, as a consequence of   \eqref{3.72}, one has
\begin{equation}
\label{3.77}
{\rho}^{(1)}_*(\bxi) =  \wh{\rho}^{(1)}_*(\bxi)+ \wt{\rho}^{(1)}_*(\bxi)
  \end{equation}
with
\begin{equation}
\label{3.78}
\begin{aligned}
\wh{\rho}^{(1)}_*(\bxi) &= \sum_{\n \in \Sigma_1} \intop_{\Omega + \n}  d\z\, \mu_*(\z)
\langle \z - \n, \bxi\rangle  \intop_0^1
 \frac{ \langle \bxi, \n + t (\z-\n)\rangle }{|\n + t (\z-\n)|^{d + \alpha} }\,dt
 \\-
&  (d+\alpha) \sum_{\n \in \Sigma_1} \intop_{\Omega + \n}  d\z\, \mu_*(\z)
\sum_{j=1}^d (z_j - n_j)  \intop_0^1
 \frac{(n_j + t(z_j - n_j)) \langle \bxi, \n + t (\z-\n)\rangle^2 }{2 |\n + t (\z-\n)|^{d + \alpha+2}}\,dt
\end{aligned}
  \end{equation}
and
\begin{equation}
\label{3.79}
\begin{aligned}
\wt{\rho}^{(1)}_*(\bxi) &\!=\!\! \sum_{\n \in \Sigma_1} \intop_{\Omega + \n} \!\! \!d\z\, \mu_*(\z)
\langle \z - \n, \bxi\rangle \!\! \intop_0^1 \!\!
 \frac{ |\langle \bxi, \n + t (\z-\n)\rangle |^\alpha F_{s,p}( \langle \bxi, \n + t (\z-\n)\rangle)}{|\n + t (\z-\n)|^{d + \alpha}
 (1+ |\operatorname{ln} |\langle \bxi, \n + t (\z-\n)\rangle| |)^p}\,dt
 \\
 &- (d+\alpha) \sum_{\n \in \Sigma_1} \intop_{\Omega + \n}  d\z\, \mu_*(\z)
\sum_{j=1}^d (z_j - n_j)
\\
&\times \int_0^1
 \frac{(n_j + t(z_j - n_j)) |\langle \bxi, \n + t (\z-\n)\rangle |^{1+\alpha} F_{c,p}( \langle \bxi, \n + t (\z-\n)\rangle)}{|\n + t (\z-\n)|^{d + \alpha+2}
 (1+ |\operatorname{ln} |\langle \bxi, \n + t (\z-\n)\rangle| |)^p}\,dt.
\end{aligned}
  \end{equation}
  Let us check that the function in  \eqref{3.78} can be written in the form
  \begin{equation}
\label{3.80a}
 \wh{\rho}^{(1)}_*(\bxi) = \sum_{\n \in \Sigma_1}
  \int_{\Omega + \n} \frac{\mu_*(\z) \langle \bxi, \z \rangle^2 }{2|\z|^{d+\alpha}} \,d\z.
  \end{equation}
 To this end we denote
 $$
\psi_{\bxi}(\z) :=  \frac{\langle \bxi, \z \rangle^2 }{2|\z|^{d+\alpha}}
$$
and transform the right-hand side of \eqref{3.80a} by analogy with \eqref{3.72a}:
\begin{equation*}
\begin{aligned}
&\int_{\Omega + \n}   \mu_*(\z) \frac{\langle \bxi, \z \rangle^2 }{2|\z|^{d+\alpha}} \,d\z
= \int_{\Omega + \n}   \mu_*(\z) \left(\psi_{\bxi}(\z) - \psi_{\bxi}(\n) \right) \,d\z
\\
&=  \int_{\Omega + \n}  d\z\, \mu_*(\z) \sum_{j=1}^d(z_j -n_j)
\int_0^1 \partial_j \psi_{\bxi}(\n + t (\z-\n))\,dt.
\end{aligned}
 \end{equation*}
 Since
$$
 \partial_j \psi_{\bxi}(\z) =  \frac{\xi_j \langle \bxi, \z\rangle}{|\z |^{d+\alpha}}
 - \frac{ (d+\alpha) z_j \langle \bxi, \z\rangle^2}{2 |\z |^{d+\alpha +2}},
$$
this yields
\begin{equation}
\label{3.79aa}
\begin{aligned}
& \int_{\Omega + \n}   \mu_*(\z) \frac{\langle \bxi, \z \rangle^2 }{2|\z|^{d+\alpha}} \,d\z  =
 \int_{\Omega + \n}  d\z\, \mu_*(\z)
\langle \z - \n, \bxi\rangle  \int_0^1 \frac{\langle \bxi, \n + t (\z-\n)\rangle}{|\n + t (\z-\n)|^{d+\alpha}}\,dt
\\
&- (d+\alpha)  \int_{\Omega + \n}  d\z\, \mu_*(\z)
\sum_{j=1}^d(z_j -n_j)
\int_0^1 \frac{(n_j + t (z_j - n_j)) \langle \bxi, \n + t (\z-\n)\rangle^2}{2 |\n + t (\z-\n)|^{d+\alpha+2}}
\,dt.
\end{aligned}
  \end{equation}
  From this relation, considering \eqref{3.78}, we derive representation \eqref{3.80a}.

Combining \eqref{3.67}, \eqref{3.70}, \eqref{3.77} and \eqref{3.80a} results in the relation
\begin{equation}
\label{3.79a}
{\rho}_*(\bxi) = \sum_{\n \in \Z^d}
\int_{\Omega + \n}   \mu_*(\z) \frac{\langle \bxi, \z \rangle^2 }{2|\z|^{d+\alpha}} \,d\z +
\wt{\rho}_*(\bxi),
    \end{equation}
where
 \begin{equation}
 \label{3.79ab}
\wt{\rho}_*(\bxi) = \wt{\rho}^{(0)}_*(\bxi) + \wt{\rho}^{(1)}_*(\bxi).
    \end{equation}
    Notice that the sum in  \eqref{3.79a}  converges absolutely. Indeed, summing over $\Sigma_0$ and $\Sigma_1$ separately
    we have
\begin{equation}
\label{3.79b}
\begin{aligned}
 \sum_{\n \in \Sigma_0}
\Bigl| \intop_{\Omega + \n}   \mu_*(\z) \frac{\langle \bxi, \z \rangle^2 }{2|\z|^{d+\alpha}} \,d\z \Bigr|
\le \mu_+ |\bxi|^2 \sum_{\n \in \Sigma_0} \intop_{\Omega + \n}    \frac{d\z  }{2|\z|^{d+\alpha-2}}
\\
\le \mu_+ |\bxi|^2 \intop_{|\z|\le R+ \sqrt{d}} \frac{d\z  }{2|\z|^{d+\alpha-2}} =
\mu_+ |\bxi|^2 \omega_d \frac{(R+ \sqrt{d})^{2-\alpha}}{2(2-\alpha)}
\end{aligned}
    \end{equation}
and, due to     \eqref{2.41}  и \eqref{3.79aa},
\begin{equation}
\label{3.79c}
\begin{aligned}
 \sum_{\n \in \Sigma_1}
\Bigl| \intop_{\Omega + \n}   \mu_*(\z) \frac{\langle \bxi, \z \rangle^2 }{2|\z|^{d+\alpha}} \,d\z \Bigr|
\le \mu_+ |\bxi|^2 (d+\alpha +1) \sum_{\n \in \Sigma_1} \intop_{\Omega + \n}    \frac{ d\z  }{|\z|^{d+\alpha-1}}
\\
\le \mu_+ |\bxi|^2 (d+\alpha +1) \intop_{|\z| > R} \frac{d\z  }{|\z|^{d+\alpha-1}} =
\mu_+ |\bxi|^2 \frac{\omega_d  (d+\alpha +1)}{(\alpha -1) R^{\alpha -1}}.
\end{aligned}
    \end{equation}
Summing up the last two inequalities yields
    $$
    \begin{aligned}
    & \sum_{\n \in \Z^d}
\Bigl| \int_{\Omega + \n}   \mu_*(\z) \frac{\langle \bxi, \z \rangle^2 }{2|\z|^{d+\alpha}} \,d\z \Bigr|
\le \mu_+  c_4(d,\alpha) |\bxi|^2,
\\
&c_4(d,\alpha) = \omega_d  \bigg(\frac{(R(d,\alpha)+ \sqrt{d})^{2-\alpha}}{2(2-\alpha)}
+ \frac{ d+\alpha +1}{(\alpha -1) R(d,\alpha)^{\alpha -1}}\bigg).
\end{aligned}
    $$
    This completes the proof of estimate \eqref{3.65a}.

It remains to estimate the form  in \eqref{3.79}.  Since $\wt{\rho}^{(1)}_*(\mathbf{0}) =0$,
it is sufficient to consider the case $\bxi \ne {\mathbf 0}$.
By \eqref{3.75}, \eqref{3.76}  we have
\begin{equation}
\label{3.80}
\begin{aligned}
|\wt{\rho}^{(1)}_*(\bxi)|& \le \mu_+ \left({\mathfrak c}_s(p,\alpha) + (d+\alpha) {\mathfrak c}_c(p,\alpha)\right)
|\bxi|^{1+\alpha}
\\ \times
 & \sum_{\n \in \Sigma_1} \intop_{\Omega + \n}  d\z\,   \intop_0^1
 \frac{ |\langle \wh{\bxi}, \n + t (\z-\n)\rangle |^\alpha}{|\n + t (\z-\n)|^{d + \alpha}
 (1+ |\operatorname{ln} |\langle \bxi, \n + t (\z-\n)\rangle| |)^p}\,dt, \quad \wh{\bxi}: = \frac{\bxi}{|\bxi|}.
 \end{aligned}
  \end{equation}

Representing $\z$ as $\z = \langle \wh{\bxi},\z\rangle \wh{\bxi} + \z_\perp$, notice that the cylinder
$$
{\mathcal C}_R := \left\{\z\in \R^d:\  |  \langle \wh{\bxi},\z\rangle | < \frac{R}{\sqrt{2}}, \ |\z_\perp| < \frac{R}{\sqrt{2}} \right\}
$$
is a subset of the ball $B_R(\mathbf{0})$ and, thus,  $(\Omega + \n) \subset \R^d \setminus {\mathcal C}_R$ for
$\n \in \Sigma_1$.
We consider  the following two cases:

Case 1. If $|\bxi| \le \min\left\{ \frac{1}{6\sqrt{d}}, \frac{1}{\sqrt{2} R} \right\} =: \delta_1(d,\alpha)$, we divide
the set $\R^d \setminus {\mathcal C}_R$  into three parts:
$$
\R^d \setminus {\mathcal C}_R = \Xi_1 \cup \Xi_2 \cup \Xi_3,
$$
where
\vskip -2mm
$$
\begin{aligned}
\Xi_1 := & \Big\{ \z \in \R^d:  |  \langle \wh{\bxi},\z\rangle | \le \frac{R}{\sqrt{2}}, \ |\z_\perp| \ge  \frac{R}{\sqrt{2}} \Big\},
\\
 \Xi_2 := & \Big\{ \z \in \R^d:  \frac{R}{\sqrt{2}} < |  \langle \wh{\bxi},\z\rangle | \le \frac{1}{2|\bxi|}, \ \z_\perp \in \R^{d-1} \Big\},
 \\
  \Xi_3 := & \Big\{ \z \in \R^d:   |  \langle \wh{\bxi},\z\rangle | > \frac{1}{2|\bxi|}, \ \z_\perp \in \R^{d-1} \Big\}.
 \end{aligned}
$$
Let $\Sigma_1^{(1)}$ be the collection of indices $\n \in \Sigma_1$ such that the cell $\Omega + \n$ has a non-trivial intersection with $\Xi_1$,
and let $\Sigma_1^{(2)}$ be the collection of indices  $\n \in \Sigma_1 \setminus \Sigma_1^{(1)}$ for which
the whole cell $\Omega + \n$ belongs to $\Xi_2$.
Denote  $\Sigma_1^{(3)} = \Sigma_1 \setminus (\Sigma_1^{(1)} \cup \Sigma_1^{(2)})$.

Using the notation $\widetilde{z}_1 = \langle \wh{\bxi}, \z\rangle$ and considering \eqref{2.41}, we have:
\begin{equation}
\label{3.81}
\begin{aligned}
&\sum_{\n \in \Sigma_1^{(1)}} \int_{\Omega + \n}  d\z\,   \int_0^1
 \frac{ |\langle \wh{\bxi}, \n + t (\z-\n)\rangle |^\alpha}{|\n + t (\z-\n)|^{d + \alpha}
 (1+ |\operatorname{ln} |\langle \bxi, \n + t (\z-\n)\rangle| |)^p}\,dt
 \\
&\le \sum_{\n \in \Sigma_1^{(1)}} \int_{\Omega + \n}  d\z\,   \int_0^1
 \frac{ dt}{|\n + t (\z-\n)|^{d} } \le  \sum_{\n \in \Sigma_1^{(1)}} \int_{\Omega + \n}  \frac{2d\z}{|\z|^d}
\\
& \le \intop_{|\wt{z}_1|  \le\frac{R}{\sqrt{2}} + \sqrt{d}} d\wt{z}_1 \intop_{|\z_\perp| \ge \frac{R}{\sqrt{2}}-\sqrt{d}}
 \frac{2\,d\z_\perp}{|\z_\perp|^d} = 4 \omega_{d-1} \Bigl( \frac{R}{\sqrt{2}} + \sqrt{d}\Bigr) \Bigl( \frac{R}{\sqrt{2}} - \sqrt{d}\Bigr)^{-1} =: c_6^{(1)}(d,\alpha).
 \end{aligned}
  \end{equation}

Notice that, for $\n \in \Sigma_1^{(2)}$ and $\z \in \Omega +\n$, the inequality
$ |  \langle \wh{\bxi}, \z\rangle | = |\wt{z}_1| > \frac{R}{\sqrt{2}} > \sqrt{d}$ holds. Therefore,
$$
| \langle \wh{\bxi}, \n + t (\z-\n)\rangle | \le |  \langle \wh{\bxi}, \z\rangle | + (1-t) | \langle \wh{\bxi}, \n -\z)\rangle|
\le |\wt{z}_1| + \sqrt{d} \le 2  |\widetilde{z}_1|.
$$
Since $ |\wt{z}_1| \le (2 |\bxi|)^{-1}$, then
$$
| \langle {\bxi}, \n + t (\z-\n)\rangle | \le 2 |\bxi|  |\wt{z}_1| \le 1,
$$
and, consequently,
$$
| \operatorname{ln} | \langle {\bxi}, \n + t (\z-\n)\rangle || \ge | \operatorname{ln} 2 |\bxi|  |\wt{z}_1|   |.
$$
Here we have used the fact that $ | \operatorname{ln} \lambda |$ is a decreasing function for $0< \lambda \le 1$.
Combining the above inequalities with  \eqref{2.41} yields
\begin{equation}
\label{3.82}
\begin{aligned}
&\sum_{\n \in \Sigma_1^{(2)}} \int_{\Omega + \n}  d\z\,   \int_0^1
 \frac{ |\langle \wh{\bxi}, \n + t (\z-\n)\rangle |^\alpha}{|\n + t (\z-\n)|^{d + \alpha}
 (1+ |\operatorname{ln} |\langle \bxi, \n + t (\z-\n)\rangle| |)^p}\,dt
 \\
& \le \sum_{\n \in \Sigma_1^{(2)}} \int_{\Omega + \n}
 \frac{ 2 (2 |\langle \wh{\bxi}, \z\rangle |)^\alpha \,d\z}{|\z|^{d + \alpha}
 (1+ |\operatorname{ln} 2 |\bxi|  |\wt{z}_1|  |)^p}
\\
& \le \intop_{ \frac{R}{\sqrt{2}} \le |\wt{z}_1|  \le\frac{1}{2|\bxi|} }
\frac{ 2 (2 | \wt{z}_1  |)^\alpha \,d \wt{z}_1}{ (1+ |\operatorname{ln} 2 |\bxi|  |\wt{z}_1|  |)^p}
 \intop_{\R^{d-1}}
 \frac{d\z_\perp}{ (\wt{z}_1^2 + |\z_\perp|^2)^{(d+\alpha)/2} }
\\
& = 2^{1+ \alpha} \mathfrak{c}'(d,\alpha)\intop_{ \frac{R}{\sqrt{2}} \le |\wt{z}_1|  \le\frac{1}{2|\bxi|} }
\frac{ d \wt{z}_1}{ |\wt{z}_1|(1+ |\operatorname{ln} 2 |\bxi|  |\wt{z}_1|  |)^p}
\\
&= 2^{1+ \alpha} \mathfrak{c}'(d,\alpha)\intop_{ \sqrt{2} R|\bxi| \le | s |  \le 1  }
\frac{ d s}{ | s|(1+ |\operatorname{ln} | s | |)^p}
\\
& \le 2^{1+ \alpha} \mathfrak{c}'(d,\alpha)\int_{-\infty}^0
\frac{ d \tau}{ (1+ |\tau| )^p} = 2^{1+ \alpha} \frac{\mathfrak{c}'(d,\alpha)}{p-1}
= c_6^{(2)}(p,d,\alpha),
 \end{aligned}
  \end{equation}
  where
$$
\mathfrak{c}'(d,\alpha):= \int_{\R^{d-1}}
 \frac{d\w'}{ (1  + |\w'|^2)^{(d+\alpha)/2} }.
$$
Here we have also performed the change of variables $s = 2 |\bxi|  \wt{z}_1$ and $\tau = \operatorname{ln} | s |$.

If  $\n \in \Sigma_1^{(3)}$ and $\z \in \Omega +\n$, then, taking into account the relation
$|\bxi|\le \frac{1}{6\sqrt{d}}$, we have $ |  \langle \wh{\bxi}, \z\rangle | = |\wt{z}_1| > \frac{1}{2|\bxi|} -\sqrt{d} \ge 2 \sqrt{d}$
and, therefore,
$$
| \langle \wh{\bxi}, \n + t (\z-\n)\rangle | \ge |  \langle \wh{\bxi}, \z\rangle | - (1-t) | \langle \wh{\bxi}, \n -\z)\rangle|
\ge |\wt{z}_1| - \sqrt{d} \ge \frac{1}{2}  |\wt{z}_1| \ge \frac{1}{2} \left(\frac{1}{2|\bxi|} - \sqrt{d} \right) \ge
\frac{1}{6|\bxi|}.
$$
Since $\wh{\bxi}=\frac\bxi{|\bxi|}$, this yields
$$
\textstyle
| \langle {\bxi}, \n + t (\z-\n)\rangle | \ge \frac{1}{2}  |\bxi| |\wt{z}_1| \ge \frac{1}{6},
$$
and, using the elementary inequality
$$
\textstyle
1 + \left| \operatorname{ln} \lambda_1  \right| \le (1 + \operatorname{ln} 6) \Big( 1 + \left| \operatorname{ln} \lambda_2  \right|\Big), \quad \frac{1}{6} \le \lambda_1 \le \lambda_2,
$$
we arrive at the estimate
\begin{equation}
\label{3.99a}
1 + \left| \operatorname{ln} | \langle {\bxi}, \n + t (\z-\n)\rangle | \right| \ge (1 + \operatorname{ln} 6)^{-1}
\left(1+ \bigl| \operatorname{ln} \frac{1}{2} |\bxi| |\wt{z}_1|   \bigr| \right).
  \end{equation}
Next, considering  \eqref{2.41} and \eqref{3.99a} we obtain
\begin{equation}
\label{3.83}
\begin{aligned}
&\sum_{\n \in \Sigma_1^{(3)}} \intop_{\Omega + \n}  d\z\,   \intop_0^1
 \frac{ |\langle \wh{\bxi}, \n + t (\z-\n)\rangle |^\alpha}{|\n + t (\z-\n)|^{d + \alpha}
 (1+ |\operatorname{ln} |\langle \bxi, \n + t (\z-\n)\rangle| |)^p}\,dt
 \\
&\le \sum_{\n \in \Sigma_1^{(3)}} \intop_{\Omega + \n}
 \frac{ 2  (1 + \operatorname{ln} 6)^p d\z}{|\z|^{d}\left(1+ \bigl| \operatorname{ln} \frac{1}{2} |\bxi| |\wt{z}_1|   \bigr| \right)^p }
 \\
& \le 2  (1 + \operatorname{ln} 6)^p \intop_{|\wt{z}_1|  \ge \frac{1}{2|\bxi|} - \sqrt{d}}
\frac{d\wt{z}_1}{\left(1+ \bigl| \operatorname{ln} \frac{1}{2} |\bxi| |\wt{z}_1|   \bigr| \right)^p}
\intop_{\R^{d-1}}
 \frac{d\z_\perp}{(\wt{z}_1^2 + |\z_\perp|^2)^{d/2}}
 \\
 & = 2  (1 + \operatorname{ln} 6)^p {\mathfrak c}'_d
 \intop_{|\wt{z}_1|  \ge \frac{1}{2|\bxi|} - \sqrt{d}}
\frac{d\wt{z}_1}{ |\wt{z}_1| \left(1+ \bigl| \operatorname{ln} \frac{1}{2} |\bxi| |\wt{z}_1|   \bigr| \right)^p}
\\
&= 2  (1 + \operatorname{ln} 6)^p {\mathfrak c}'_d
 \intop_{|s|  \ge \frac{1}{4} - |\bxi| \frac{\sqrt{d}}{2}}
\frac{d s}{ |s| \left(1+ \bigl| \operatorname{ln} |s|   \bigr| \right)^p}
 \le 2  (1 + \operatorname{ln} 6)^p {\mathfrak c}'_d
 \intop_{\R} \frac{d \tau}{\left(1+ |\tau | \right)^p}
 \\
 & = \frac{4}{p-1}  (1 + \operatorname{ln} 6)^p {\mathfrak c}'_d
 =: c_6^{(3)}(p,d);
 \end{aligned}
  \end{equation}
  here we have also performed the changes of variables  $s = \frac{1}{2} |\bxi|  \wt{z}_1$ and
  $\tau = \operatorname{ln} | s |$.

Finally,  by \eqref{3.80}--\eqref{3.82} and  \eqref{3.83},  we have the estimate
\begin{equation}
\label{3.84}
|\wt{\rho}^{(1)}_*(\bxi)| \le \mu_+ c_6(p,d,\alpha) |\bxi|^{1+\alpha},\quad |\bxi| \le \delta_1(d,\alpha),
  \end{equation}
with
$$
c_6(p,d,\alpha) =\left({\mathfrak c}_s(p,\alpha) + (d+\alpha) {\mathfrak c}_c(p,\alpha)\right)
 \left(c_6^{(1)}(d,\alpha) + c_6^{(2)}(p,d,\alpha) +c_6^{(3)}(p,d) \right).
 $$

Case 2.  The case $|\bxi| >  \delta_1(d,\alpha)$ is easier.  Let $\Sigma_1^{(1)}$ be defined as above, and denote
 $\Sigma_1^{(4)} = \Sigma_1 \setminus \Sigma_1^{(1)}$.
Estimate \eqref{3.81} remains valid.

If $\n \in \Sigma_1^{(4)}$ and $\z \in \Omega +\n$, then thanks to the inequality
$ |  \langle \wh{\bxi}, \z\rangle | = |\wt{z}_1| > \frac{R}{\sqrt{2}} -\sqrt{d} > \sqrt{d}$,
the following relations hold
$$
| \langle \wh{\bxi}, \n + t (\z-\n)\rangle |
\ge |\wt{z}_1| - \sqrt{d} =   |\wt{z}_1| - \sigma \Bigl(\frac{R}{\sqrt{2}} -\sqrt{d}\Bigr)
> (1-\sigma) |\wt{z}_1| >  \frac{R}{\sqrt{2}} - 2\sqrt{d}
=: \kappa(d,\alpha),
$$
where
$$
\sigma = \sigma(d,\alpha) := \frac{\sqrt{2d}}{{R} - \sqrt{2d}} <1.
$$
Consequently,
$$
| \langle {\bxi}, \n + t (\z-\n)\rangle | \ge (1-  \sigma) |\bxi| |\wt{z}_1| >
  \kappa(d,\alpha) \delta_1(d,\alpha).
$$
and we have
$$
1 + \left| \operatorname{ln} | \langle {\bxi}, \n + t (\z-\n)\rangle | \right| \ge
 (1 + |\operatorname{ln} (\kappa \delta_1)|)^{-1}
\left(1+ \bigl| \operatorname{ln} (1- \sigma) |\bxi| |\wt{z}_1|   \bigr| \right).
$$
In the same way as in \eqref{3.83}  we obtain
\begin{equation}
\label{3.85}
\begin{aligned}
&\sum_{\n \in \Sigma_1^{(4)}} \intop_{\Omega + \n}  d\z\,   \intop_0^1
 \frac{ |\langle \wh{\bxi}, \n + t (\z-\n)\rangle |^\alpha}{|\n + t (\z-\n)|^{d + \alpha}
 (1+ |\operatorname{ln} |\langle \bxi, \n + t (\z-\n)\rangle| |)^p}\,dt
 \\
&\le \sum_{\n \in \Sigma_1^{(4)}} \intop_{\Omega + \n}
 \frac{ 2  (1 + |\operatorname{ln} (\kappa \delta_1) |)^p d\z}{|\z|^{d}\left(1+ \bigl| \operatorname{ln} (1-\sigma) |\bxi| |\tilde{z}_1|   \bigr| \right)^p }
 \\
& \le 2  (1 + |\operatorname{ln} (\kappa \delta_1)|)^p \intop_{|\wt{z}_1| >\frac{R}{\sqrt{2}} -\sqrt{d}}
\frac{d\wt{z}_1}{\left(1+ \bigl| \operatorname{ln} (1-\sigma)  |\bxi| |\wt{z}_1|   \bigr| \right)^p}
\intop_{\R^{d-1}}
 \frac{d\z_\perp}{(\wt{z}_1^2 + |\z_\perp|^2)^{d/2}}
 \\
 & = 2   (1 + |\operatorname{ln} (\kappa \delta_1)|)^p  {\mathfrak c}'_d
 \intop_{|\wt{z}_1|  > \frac{R}{\sqrt{2}} -\sqrt{d} }
\frac{d\wt{z}_1}{ |\tilde{z}_1| \left(1+ \bigl| \operatorname{ln} (1-\sigma) |\bxi| |\wt{z}_1|   \bigr| \right)^p}
\\
& \le 2   (1 + |\operatorname{ln} (\kappa \delta_1)|)^p {\mathfrak c}'_d
 \int_{\R} \frac{d \tau}{\left(1+ |\tau | \right)^p}
  = \frac{4}{p-1}   (1 + |\operatorname{ln} (\kappa  \delta_1)|)^p {\mathfrak c}'_d
 =: c_6^{(4)}(p,d,\alpha).
 \end{aligned}
  \end{equation}
 From \eqref{3.80}, \eqref{3.81} and \eqref{3.85} it follows that
\begin{equation}
\label{3.86}
|\wt{\rho}^{(1)}_*(\bxi)| \le \mu_+ \check{c}_6(p,d,\alpha) |\bxi|^{1+\alpha},\quad \bxi \in \wt{\Omega},\ \
|\bxi| > \delta_1(d,\alpha),
  \end{equation}
with
$$
\check{c}_6(p,d,\alpha) =\left({\mathfrak c}_s(p,\alpha) + (d+\alpha) {\mathfrak c}_c(p,\alpha)\right)
 \left(c_6^{(1)}(d,\alpha) + c_6^{(4)}(p,d,\alpha) \right).
 $$
 Combining \eqref{3.84} and \eqref{3.86} yields
\begin{equation}
\label{3.87}
|\wt{\rho}^{(1)}_*(\bxi)| \le \mu_+ \max \{{c}_6(p,d,\alpha), \check{c}_6(p,d,\alpha) \} |\bxi|^{1+\alpha},\quad \bxi \in \wt{\Omega}.
  \end{equation}

It remains to choose  $p=2$ in \eqref{3.73}, \eqref{3.74} and take into account \eqref{3.71}, \eqref{3.79ab}, \eqref{3.87}
to deduce the required estimate \eqref{3.66}  with constant
$$
C_{10} = \mu_+ \left( c_5(d,\alpha) (\pi \sqrt{d})^{3-\alpha} +  \max \{{c}_6(2,d,\alpha), \check{c}_6(2,d,\alpha) \} \right)
= : \mu_+  c_7(d,\alpha).
$$
This completes the proof of Lemma.
\end{proof}

By Propositions \ref{prop2.5_2}, \ref{prop3.8},  identity \eqref{2.30}  and Lemmata \ref{lem2.6}, \ref{lem3.10},
we have

\begin{proposition}
\label{prop3.12}
Let conditions \eqref{e1.1}, \eqref{e1.2} be fulfilled, and assume that  $1< \alpha < 2$. Then for $|\bxi| \le \delta_0(\alpha,\mu)$ the following estimate holds{\rm :}
\begin{equation}
\label{2.47}
\left\| \A(\bxi) F(\bxi) - \bigl(\mu^0 c_0(d,\alpha) |\bxi|^{\alpha}  + \langle g^0 \bxi, \bxi \rangle \bigr) P \right\|_{L_2(\Omega) \to L_2(\Omega)} \le C_{11} |\bxi|^{1 +\alpha}.
\end{equation}
Here  $\mu^0$ is defined in \eqref{2.33},  $g^0 = \{g^0_{jk}\}$ is a
symmetric matrix with real entries given by
\begin{equation}
\label{3.105}
\begin{aligned}
g^0_{jk} \!&\!=  \frac{1}{4} \int_{\R^d}\! d\y \int_{\Omega}\! d\x\, \frac{\mu(\x,\y)}{|\x-\y|^{d+\alpha}}
 \left((x_j-y_j)(v_k(\x)   -  v_k(\y)) + (x_k-y_k) (v_j(\x) - v_j(\y))\right)
 \\
& +\int_{\R^d} \frac{\mu_*(\z) z_j z_k }{2|\z|^{d+\alpha}}d\z,\quad j,k=1,\dots,d,
 \end{aligned}
\end{equation}
where the last integral on the right-hand side is understood as the sum of integrals over the cells
$\Omega + \n,\ \n \in \Z^d$,
and the quadratic form $ \langle g^0 \bxi, \bxi \rangle$ satisfies the estimate
\begin{equation}
\label{3.106}
| \langle g^0 \bxi, \bxi \rangle | \le C_{12}|\bxi|^2.
\end{equation}
The constants $C_{11} = C_8+ C_{10}$ and $C_{12} =d C_7 + c_4 \mu_+$ are expressed in terms of
$d,$ $\alpha,$ $\mu_-,$ $\mu_+$.
\end{proposition}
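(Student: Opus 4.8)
The plan is to assemble the stated expansion from the four building blocks already established, so the argument is essentially organisational. First I would invoke Proposition~\ref{prop2.5_2}: for $|\bxi|\le\delta_0(\alpha,\mu)$ one has
\[
\A(\bxi)F(\bxi)=\mu_+P\,\mathbb{T}(\bxi)\,P+\sum_{j,k=1}^d\xi_j\xi_k\,G_{jk}+\Phi(\bxi),
\]
with $\|G_{jk}\|_{L_2(\Omega)\to L_2(\Omega)}\le C_7$ and $\|\Phi(\bxi)\|_{L_2(\Omega)\to L_2(\Omega)}\le C_8|\bxi|^{1+\alpha}$. By identity~\eqref{2.30} the first term equals $\rho(\bxi)P$, and Lemmata~\ref{lem2.6} and~\ref{lem3.10} supply the two-term expansion
\[
\rho(\bxi)=\mu^0c_0(d,\alpha)|\bxi|^\alpha+\langle g_*\bxi,\bxi\rangle+\wt\rho_*(\bxi),\qquad|\wt\rho_*(\bxi)|\le C_{10}|\bxi|^{1+\alpha},
\]
where $\langle g_*\bxi,\bxi\rangle=\sum_{j,k}\xi_j\xi_k(g_*)_{jk}$ and $(g_*)_{jk}:=\int_{\R^d}\mu_*(\z)z_jz_k(2|\z|^{d+\alpha})^{-1}\,d\z$ (understood as the sum of integrals over the cells $\Omega+\n$). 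Finally, Proposition~\ref{prop3.8} gives $G_{jk}=g_{jk}P$ with $g_{jk}$ as in~\eqref{pr3.8_2}.

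Combining these identities, for $|\bxi|\le\delta_0(\alpha,\mu)$,
\[
\A(\bxi)F(\bxi)=\Bigl(\mu^0c_0(d,\alpha)|\bxi|^\alpha+\langle g_*\bxi,\bxi\rangle+\sum_{j,k=1}^d\xi_j\xi_k g_{jk}\Bigr)P+\wt\rho_*(\bxi)P+\Phi(\bxi).
\]
Because the tensor $\xi_j\xi_k$ is symmetric, only the symmetric part of $\{g_{jk}\}$ contributes to the double sum, so I would set $g^0_{jk}:=\tfrac12(g_{jk}+g_{kj})+(g_*)_{jk}$; inserting~\eqref{pr3.8_2} and the formula for $(g_*)_{jk}$ then reproduces~\eqref{3.105} verbatim and gives $\sum_{j,k}\xi_j\xi_k g^0_{jk}=\langle g^0\bxi,\bxi\rangle$. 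The matrix $g^0$ is symmetric by construction, and real since $\mu_*$ is real-valued and the correctors $v_k$ solving~\eqref{vk_problem} may be chosen real-valued. The leftover term $\wt\rho_*(\bxi)P+\Phi(\bxi)$ has norm at most $(C_{10}+C_8)|\bxi|^{1+\alpha}$, using $\|P\|_{L_2(\Omega)\to L_2(\Omega)}=1$, which is exactly~\eqref{2.47} with $C_{11}=C_8+C_{10}$.

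For~\eqref{3.106} I would estimate the two contributions to $\langle g^0\bxi,\bxi\rangle$ separately: by~\eqref{Gjk_le}, $\bigl|\sum_{j,k}\xi_j\xi_k g_{jk}\bigr|\le C_7\bigl(\sum_j|\xi_j|\bigr)^2\le dC_7|\bxi|^2$, while $|\langle g_*\bxi,\bxi\rangle|\le c_4(d,\alpha)\mu_+|\bxi|^2$ is precisely~\eqref{3.65a}; summing gives $C_{12}=dC_7+c_4\mu_+$. Since the whole statement is bookkeeping over the expansions of Sections~2 and~3, no genuine difficulty arises; the only mildly delicate points are the symmetrisation of $\{g_{jk}\}$ (legitimate as it is contracted with the symmetric tensor $\xi_j\xi_k$) and verifying that every discarded term is uniformly $O(|\bxi|^{1+\alpha})$ with constants depending only on $d,\alpha,\mu_-,\mu_+$.
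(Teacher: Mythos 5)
Your proposal is correct and follows essentially the same route as the paper: the paper obtains Proposition \ref{prop3.12} precisely by combining Propositions \ref{prop2.5_2} and \ref{prop3.8}, identity \eqref{2.30}, and Lemmata \ref{lem2.6} and \ref{lem3.10}, with the same bookkeeping (symmetrisation of $g_{jk}$, $C_{11}=C_8+C_{10}$, $C_{12}=dC_7+c_4\mu_+$) that you carry out.
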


\begin{remark}
The matrix  $g^0$  need not be sign-definite.
\end{remark}

\section{Approximation of the resolvent  $(\A + \eps^\alpha I)^{-1}$}\label{Sec3}

\subsection{Approximation of the resolvent of the operator $\A(\bxi)$}\label{sec3.1}

The following result on the  approximation of the resolvent $(\A(\bxi)+\varepsilon^{\alpha}I)^{-1}$ was proved in
\cite[Theorem 4.2]{JPSS24}.
\begin{theorem}[\cite{JPSS24}]
\label{teor2.1}
Let conditions \eqref{e1.1}, \eqref{e1.2} be satisfied, and assume that $1 < \alpha < 2$.
Then for all $\varepsilon>0$ and \hbox{$|\bxi|\le\delta_{0}(\alpha,\mu)$} one has
\begin{equation*}
\left\|(\A(\bxi)+\varepsilon^{\alpha}I)^{-1}-
(\mu^0 c_0(d,\alpha) |\bxi|^\alpha +\varepsilon^{\alpha})^{-1} P \right\|_{L_2(\Omega) \to L_2(\Omega)} \le
C_{13} \eps^{2 - 2\alpha},
\end{equation*}
where $\mu^0$ is defined in  \eqref{2.33},
and the constant $C_{13}$  is expressed in terms of  $d$, $\alpha$, $\mu_-$, $\mu_+$.
\end{theorem}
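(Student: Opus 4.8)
The plan is to split the resolvent of $\A(\bxi)$ by the spectral projector $F(\bxi)$ and to reduce the assertion, on the one-dimensional subspace $\Ran F(\bxi)$, to elementary scalar estimates. Fix $|\bxi|\le\delta_0(\alpha,\mu)$. By Proposition~\ref{prop2.1} the spectrum of $\A(\bxi)$ on $[0,d_0/3]$ consists of a single simple eigenvalue $\lambda_1(\bxi)$, which by \eqref{e1.23} and \eqref{delta0} satisfies $\mu_-c_0(d,\alpha)|\bxi|^\alpha\le\lambda_1(\bxi)\le\mu_+c_0(d,\alpha)|\bxi|^\alpha\le d_0/3$, while the remaining spectrum lies in $[d_0,\infty)$. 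Hence $\A(\bxi)F(\bxi)=\lambda_1(\bxi)F(\bxi)$, so that $(\A(\bxi)+\eps^\alpha I)^{-1}F(\bxi)=(\lambda_1(\bxi)+\eps^\alpha)^{-1}F(\bxi)$, whereas $\|(\A(\bxi)+\eps^\alpha I)^{-1}(I-F(\bxi))\|\le d_0^{-1}$. Since $P\mathbf 1_\Omega=\mathbf 1_\Omega$, the main term to estimate is
\[
(\lambda_1(\bxi)+\eps^\alpha)^{-1}F(\bxi)-(\mu^0c_0(d,\alpha)|\bxi|^\alpha+\eps^\alpha)^{-1}P
=(\lambda_1(\bxi)+\eps^\alpha)^{-1}\bigl(F(\bxi)-P\bigr)+\bigl[(\lambda_1(\bxi)+\eps^\alpha)^{-1}-(\mu^0c_0(d,\alpha)|\bxi|^\alpha+\eps^\alpha)^{-1}\bigr]P.
\]

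For the first summand I would invoke $\|F(\bxi)-P\|\le C_5|\bxi|$ (Proposition~\ref{prop2.3a}), the lower bound $\lambda_1(\bxi)\ge\mu_-c_0(d,\alpha)|\bxi|^\alpha$, and the elementary inequality $\sup_{t\ge0}\frac{t}{at^\alpha+b}\le C(a,\alpha)\,b^{(1-\alpha)/\alpha}$ (obtained by maximizing in $t$) with $a=\mu_-c_0(d,\alpha)$, $b=\eps^\alpha$; this bounds the first summand by $C\,\dfrac{|\bxi|}{\mu_-c_0(d,\alpha)|\bxi|^\alpha+\eps^\alpha}\le C\eps^{1-\alpha}$. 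For the second summand the key input is the scalar estimate $\bigl|\lambda_1(\bxi)-\mu^0c_0(d,\alpha)|\bxi|^\alpha\bigr|\le C|\bxi|^2$. I would derive it by applying $\A(\bxi)F(\bxi)$ and $\rho(\bxi)P$ (cf.\ \eqref{2.31}, \eqref{2.30}) to $\mathbf 1_\Omega$ and pairing with $\mathbf 1_\Omega$: Proposition~\ref{prop2.3a} together with \eqref{2.30} gives $\|\A(\bxi)F(\bxi)-\rho(\bxi)P\|\le C_6|\bxi|^2$, while Lemma~\ref{lem2.6} gives $\rho(\bxi)=\mu^0c_0(d,\alpha)|\bxi|^\alpha+\rho_*(\bxi)$ with $|\rho_*(\bxi)|\le C_9|\bxi|^2$; since $(F(\bxi)\mathbf 1_\Omega,\mathbf 1_\Omega)=1+O(|\bxi|)$ and $\lambda_1(\bxi)=O(|\bxi|^\alpha)$ with $1+\alpha>2$, pairing yields $\lambda_1(\bxi)=\rho(\bxi)+O(|\bxi|^2)=\mu^0c_0(d,\alpha)|\bxi|^\alpha+O(|\bxi|^2)$. (Equivalently, this may be read off from Proposition~\ref{prop3.12}.) Writing the difference of reciprocals over a common denominator and using $\lambda_1(\bxi)\ge\mu_-c_0(d,\alpha)|\bxi|^\alpha$ once more, the second summand is bounded by
\[
\frac{C|\bxi|^2}{\bigl(\mu_-c_0(d,\alpha)|\bxi|^\alpha+\eps^\alpha\bigr)\bigl(\mu^0c_0(d,\alpha)|\bxi|^\alpha+\eps^\alpha\bigr)}
\le C\Bigl(\sup_{t\ge0}\tfrac{t}{c't^\alpha+\eps^\alpha}\Bigr)^2\le C\eps^{2-2\alpha},
\qquad c':=\mu_-c_0(d,\alpha).
\]

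Collecting the three contributions (the $I-F(\bxi)$ part, the $F(\bxi)-P$ part of order $\eps^{1-\alpha}$, and the reciprocal-difference part of order $\eps^{2-2\alpha}$) and noting that for $0<\eps\le1$ one has $\eps^{1-\alpha}\le\eps^{2-2\alpha}$ and $1\le\eps^{2-2\alpha}$ (because $\alpha>1$), this yields the bound $C_{13}\eps^{2-2\alpha}$ for $\eps\in(0,1]$ with $C_{13}$ depending only on $d,\alpha,\mu_-,\mu_+$; for $\eps>1$ both operators in the statement have norm $O(\eps^{-\alpha})\le O(\eps^{2-2\alpha})$ (since $\alpha<2$), so the estimate is immediate. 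I expect the only genuinely non-routine ingredient to be the scalar threshold expansion $\bigl|\lambda_1(\bxi)-\mu^0c_0(d,\alpha)|\bxi|^\alpha\bigr|\le C|\bxi|^2$ — that is, pinning down the leading behaviour of the bottom eigenvalue of $\A(\bxi)$ — which, however, is already furnished by the earlier Propositions~\ref{prop2.3a}, \ref{prop3.12} and Lemma~\ref{lem2.6}; everything else is the spectral-projector splitting together with the optimization of $t\mapsto t/(at^\alpha+b)$, which is precisely the mechanism that converts the $|\bxi|^2$-discrepancy into the exponent $2-2\alpha$.
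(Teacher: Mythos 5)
Your argument is correct, and all the inputs you use (Proposition~\ref{prop2.1}, the bounds $\|F(\bxi)-P\|\le C_5|\bxi|$ and $\|\A(\bxi)F(\bxi)-\mu_+P\mathbb{T}(\bxi)P\|\le C_6|\bxi|^2$ from Proposition~\ref{prop2.3a}, the identity \eqref{2.30} and the expansion of $\rho(\bxi)$ in Lemma~\ref{lem2.6}) are indeed available; the reduction of the $|\bxi|^2$-discrepancy to $\eps^{2-2\alpha}$ via maximizing $t\mapsto t/(at^\alpha+b)$, and the separate treatment of the $(I-F(\bxi))$-part and of $\eps>1$, are all sound. The route differs slightly from the one in the paper (the theorem itself is quoted from \cite{JPSS24}; the scheme used there and reproduced here in the proofs of the bound \eqref{Xi=le} and of Theorem~\ref{teor2.2}): you exploit that $F(\bxi)$ is rank one, so $\A(\bxi)F(\bxi)=\lambda_1(\bxi)F(\bxi)$, extract the scalar threshold expansion $\lambda_1(\bxi)=\mu^0c_0(d,\alpha)|\bxi|^\alpha+O(|\bxi|^2)$ by pairing with $\1_\Omega$, and then argue with scalars; the paper instead never isolates $\lambda_1(\bxi)$ and works with the three-term operator identity for $\Sigma(\bxi,\eps)=(\A(\bxi)+\eps^\alpha I)^{-1}F(\bxi)-(\dots)^{-1}P$, estimating $\A(\bxi)F(\bxi)-(\dots)P$ directly in operator norm. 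The two versions consume the same threshold approximations and yield the same exponent; your eigenvalue bookkeeping is a bit more elementary and transparent in this rank-one situation, while the paper's operator-level identity is the form that carries over unchanged to the refined approximation with the corrector term $\langle g^0\bxi,\bxi\rangle$ (Theorem~\ref{teor2.2}), where one wants the statement at the level of $\A(\bxi)F(\bxi)$ rather than of the eigenvalue alone.
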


The derivation of a more accurate approximation relies on Proposition  \ref{prop3.12}.
In what follows we suppose that  $|\bxi|$ is so small that
\begin{equation*}
| \langle g^0 \bxi, \bxi \rangle | \le \frac{\mu^0 c_0}{2} |\bxi|^\alpha.
\end{equation*}
According to \eqref {3.106}, the last inequality holds if
\begin{equation}
\label{e4.3}
|  \bxi | \le \delta_2,  \quad \delta_2(d,\alpha,\mu) := \Big(\frac{\mu^0 c_0}{2 C_{12}}\Big)^{1/(2-\alpha)}.
\end{equation}
Then, by Proposition \ref{prop1.5} we have
\begin{align}\label{e2.39}
\|(\A(\bxi)+\varepsilon^{\alpha}I)^{-1} \|
& \le(\mu_{-} c_0(d,\alpha)|\bxi|^{\alpha}+\varepsilon^{\alpha})^{-1},\
\ \varepsilon>0,\ \ \bxi \in \wt{\Omega};
\\
\label{e2.40}
\left(\mu^0 c_0 |\bxi|^\alpha   + \langle g^0 \bxi, \bxi \rangle + \varepsilon^{\alpha} \right)^{-1}
& \le \Bigl( \frac{1}{2} \mu_{-} c_0(d,\alpha) |\bxi|^{\alpha}+\varepsilon^{\alpha} \Bigr)^{-1},\
\ \varepsilon>0,\ \ |\bxi| \le \delta_2.
\end{align}
Denote
$$
\delta_* = \delta_*(d,\alpha,\mu) := \min \{ \delta_0(\alpha,\mu), \delta_2(d, \alpha, \mu)\},
$$
where $\delta_0(\alpha, \mu)$ is defined in  \eqref{delta0}, and  $\delta_2(d, \alpha, \mu)$ ---  in  \eqref{e4.3}.
The notation $\Sigma (\bxi,\eps)$ stands for
\begin{equation}
\label{Xi=}
\Sigma (\bxi,\eps) := (\A(\bxi)+\varepsilon^{\alpha}I)^{-1} F(\bxi) -
\left(\mu^0 c_0 |\bxi|^\alpha   + \langle g^0 \bxi, \bxi \rangle + \varepsilon^{\alpha} \right)^{-1} P,
 \ \ |\bxi | \le \delta_*, \ \ \eps >0,
\end{equation}
where $\mu^0$ is defined in  \eqref{2.33}, and the entries of the matrix  $g^0$ are given by \eqref{3.105}.
\begin{proposition}
Let conditions  \eqref{e1.1}, \eqref{e1.2} be satisfied, and assume that  $1 < \alpha < 2$.
Then
\begin{equation}
\label{Xi=le}
\| \Sigma(\bxi,\eps) \|_{L_2(\Omega) \to L_2(\Omega)} \le C_{14} \eps^{1-\alpha},
\quad |\bxi|\le\delta_{*}, \quad \eps>0.
\end{equation}
\end{proposition}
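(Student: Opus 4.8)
The plan is to reduce the operator bound \eqref{Xi=le} to an elementary scalar estimate on the one–dimensional spectral subspace $\Ran F(\bxi)$, by combining the sharp threshold approximation of Proposition \ref{prop3.12} with the crude projector estimate \eqref{F-P_a} and the resolvent lower bounds \eqref{e2.39}, \eqref{e2.40}. By Proposition \ref{prop2.1}, for $|\bxi|\le\delta_0(\alpha,\mu)$ the range of $F(\bxi)$ is one–dimensional and consists of eigenfunctions of $\A(\bxi)$ associated with the simple eigenvalue $\lambda_1(\bxi)$; hence for every $v\in L_2(\Omega)$ the vector $F(\bxi)v$ lies in that eigenspace, so $(\A(\bxi)+\eps^\alpha I)^{-1}F(\bxi)v=(\lambda_1(\bxi)+\eps^\alpha)^{-1}F(\bxi)v$. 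Writing $\lambda(\bxi):=\mu^0 c_0(d,\alpha)|\bxi|^\alpha+\langle g^0\bxi,\bxi\rangle$, this gives the splitting
\begin{equation*}
\Sigma(\bxi,\eps)=(\lambda_1(\bxi)+\eps^\alpha)^{-1}(F(\bxi)-P)+\bigl((\lambda_1(\bxi)+\eps^\alpha)^{-1}-(\lambda(\bxi)+\eps^\alpha)^{-1}\bigr)P,
\end{equation*}
so that \eqref{Xi=le} follows once each summand is shown to be $O(\eps^{1-\alpha})$.

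For the first summand I would use $\lambda_1(\bxi)\ge\mu_-c_0(d,\alpha)|\bxi|^\alpha$ (from \eqref{e1.23}) and $\|F(\bxi)-P\|\le C_5|\bxi|$ (from \eqref{F-P_a}) to bound its norm by $C_5|\bxi|(\mu_-c_0|\bxi|^\alpha+\eps^\alpha)^{-1}$. For the second summand the key input is the scalar bound $|\lambda_1(\bxi)-\lambda(\bxi)|\le C|\bxi|^{1+\alpha}$ for $|\bxi|\le\delta_0(\alpha,\mu)$: pairing the operator inequality of Proposition \ref{prop3.12} with $\1_\Omega$ and using $\A(\bxi)F(\bxi)\1_\Omega=\lambda_1(\bxi)F(\bxi)\1_\Omega$, $P\1_\Omega=\1_\Omega$ and $\|\1_\Omega\|=1$ gives $|\lambda_1(\bxi)(F(\bxi)\1_\Omega,\1_\Omega)-\lambda(\bxi)|\le C_{11}|\bxi|^{1+\alpha}$, while $|(F(\bxi)\1_\Omega,\1_\Omega)-1|\le\|F(\bxi)-P\|\le C_5|\bxi|$ by \eqref{F-P_a} and $\lambda_1(\bxi)\le\mu_+c_0|\bxi|^\alpha$ by \eqref{e1.23}; adding the two contributions yields the claim. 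Consequently
\begin{equation*}
\bigl|(\lambda_1(\bxi)+\eps^\alpha)^{-1}-(\lambda(\bxi)+\eps^\alpha)^{-1}\bigr|=\frac{|\lambda(\bxi)-\lambda_1(\bxi)|}{(\lambda_1(\bxi)+\eps^\alpha)(\lambda(\bxi)+\eps^\alpha)}\le\frac{C|\bxi|^{1+\alpha}}{(\mu_-c_0|\bxi|^\alpha+\eps^\alpha)(\tfrac12\mu_-c_0|\bxi|^\alpha+\eps^\alpha)},
\end{equation*}
where in the denominator one uses \eqref{e1.23} and \eqref{e2.40} (the latter applicable since $|\bxi|\le\delta_*\le\delta_2$ and $\mu^0\ge\mu_-$).

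It then remains to convert these two bounds into $O(\eps^{1-\alpha})$. Substituting $|\bxi|=t\eps$ with $t\ge0$, both reduce to the elementary one–variable inequalities
\begin{equation*}
\sup_{t\ge0}\frac{t}{\mu_-c_0t^\alpha+1}<\infty,\qquad\sup_{t\ge0}\frac{t^{1+\alpha}}{(\mu_-c_0t^\alpha+1)(\tfrac12\mu_-c_0t^\alpha+1)}<\infty,
\end{equation*}
which hold precisely because $\alpha>1$ (so that $1<\alpha$ and $1+\alpha<2\alpha$): the numerators are bounded near $t=0$ and dominated by the denominators as $t\to\infty$. Multiplying through by the relevant power of $\eps$ gives $\|\Sigma(\bxi,\eps)\|\le C_{14}\eps^{1-\alpha}$, with $C_{14}$ built from $C_5$, $C_{11}$, $\mu_\pm$, $c_0(d,\alpha)$ and the two suprema above, hence depending only on $d,\alpha,\mu_-,\mu_+$.

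I do not expect a genuine obstacle in this argument: the delicate analytic work — the $O(|\bxi|^{1+\alpha})$ threshold approximation of $\A(\bxi)F(\bxi)$ — has already been carried out in Proposition \ref{prop3.12}. The only points that require attention are the reduction to the one–dimensional subspace $\Ran F(\bxi)$ (which turns the operator estimate into a scalar one via the simple eigenvalue $\lambda_1(\bxi)$) and the bookkeeping of the $\eps$– and $|\bxi|$–powers, which matches up to give exactly the exponent $\eps^{1-\alpha}$ only because $1<\alpha<2$.
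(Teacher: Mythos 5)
Your proposal is correct. It differs from the paper's proof in the decomposition, though it rests on the same inputs. The paper writes $\Sigma(\bxi,\eps)$ via a three-term operator identity,
$\Sigma=F(\bxi)(\A(\bxi)+\eps^\alpha I)^{-1}(F(\bxi)-P)+(F(\bxi)-P)(\lambda(\bxi)+\eps^\alpha)^{-1}P
-F(\bxi)(\A(\bxi)+\eps^\alpha I)^{-1}\bigl(\A(\bxi)F(\bxi)-\lambda(\bxi)P\bigr)(\lambda(\bxi)+\eps^\alpha)^{-1}P$
with $\lambda(\bxi)=\mu^0c_0|\bxi|^\alpha+\langle g^0\bxi,\bxi\rangle$, and then estimates each term by \eqref{F-P_a}, \eqref{2.47}, \eqref{e2.39}, \eqref{e2.40}, finishing with the same elementary optimization in $|\bxi|$ that you perform via the substitution $|\bxi|=t\eps$. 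You instead exploit the one-dimensionality of $\Ran F(\bxi)$ (Proposition \ref{prop2.1}) to replace $(\A(\bxi)+\eps^\alpha I)^{-1}F(\bxi)$ by the scalar $(\lambda_1(\bxi)+\eps^\alpha)^{-1}F(\bxi)$, and you extract from Proposition \ref{prop3.12} the eigenvalue asymptotics $|\lambda_1(\bxi)-\lambda(\bxi)|\le(C_{11}+\mu_+c_0C_5)|\bxi|^{1+\alpha}$ by testing against $\1_\Omega$; the rest is a scalar comparison of resolvent values plus the $\|F(\bxi)-P\|\le C_5|\bxi|$ term, with exactly the same denominators supplied by \eqref{e1.23} and \eqref{e2.40}. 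Both routes are equally rigorous and give the same order of constants; your reduction is slightly more transparent because everything becomes scalar, while the paper's identity has the advantage of not passing through the principal eigenvalue at all (consistent with its general strategy of working with contour integrals and operator-level threshold approximations, which also generalizes to situations where the relevant spectral subspace is not one-dimensional). Since your eigenvalue bound is itself a corollary of the paper's Proposition \ref{prop3.12}, no extra analytic work is hidden in your argument, and the bookkeeping of powers ($1<\alpha$, hence $1+\alpha<2\alpha$) is exactly the point that yields $\eps^{1-\alpha}$, as you note.
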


\begin{proof}[Proof]
Combining the evident identity
\begin{multline*}
\Sigma(\bxi,\eps) =
F(\bxi)(\A(\bxi)+\varepsilon^{\alpha}I)^{-1}(F(\bxi)-P)
+(F(\bxi)-P) \left(\mu^0 c_0 |\bxi|^{\alpha}  + \langle g^0 \bxi, \bxi \rangle +\varepsilon^{\alpha} \right)^{-1} P
\\
-
F(\bxi)(\A(\bxi)+\varepsilon^{\alpha}I)^{-1}
\left(\A(\bxi)F(\bxi) - (\mu^0 c_0 |\bxi|^{\alpha}  + \langle g^0 \bxi, \bxi \rangle ) P \right)
\left(\mu^0 c_0 |\bxi|^{\alpha}  + \langle g^0 \bxi, \bxi \rangle +\varepsilon^{\alpha} \right)^{-1} P
\end{multline*}
with relations \eqref{F-P_a}, \eqref{2.47}, \eqref{e2.39} and \eqref{e2.40} leads to the estimate
$$
\begin{aligned}
\| \Sigma(\bxi,\eps) \|_{L_2(\Omega) \to L_2(\Omega)} & \le  \frac{C_5 |\bxi|}{\mu_{-} c_0|\bxi|^{\alpha}+\varepsilon^{\alpha}} +  \frac{C_5 |\bxi|}{\frac{1}{2}\mu_{-} c_0|\bxi|^{\alpha}+\varepsilon^{\alpha}}
+  \frac{C_{11} |\bxi|^{1+\alpha}}{(\mu_{-} c_0|\bxi|^{\alpha}+\varepsilon^{\alpha}) \bigl(\frac{1}{2}\mu_{-} c_0|\bxi|^{\alpha}+\varepsilon^{\alpha}\bigr)}
\\
& \le \bigg( \frac{C_5}{(\mu_- c_0)^{1/\alpha}} + \frac{C_5}{\bigl( \frac{1}{2} \mu_- c_0 \bigr)^{1/\alpha}}
+ \frac{C_{11}}{ \mu_- c_0 \bigl( \frac{1}{2} \mu_- c_0 \bigr)^{1/\alpha}} \bigg) \eps^{1-\alpha}
= : C_{14}  \eps^{1-\alpha},
\end{aligned}
$$
for $|\bxi| \le \delta_*$ and $\eps >0$.
\end{proof}

\begin{theorem}\label{teor2.2}
Let conditions \eqref{e1.1} and \eqref{e1.2} be fulfilled, and assume that $1 < \alpha < 2$.
Then, for $\varepsilon>0$ and \hbox{$|\bxi|\le\delta_{*}(d, \alpha,\mu)$},  the following estimate holds{\rm :}
\begin{equation}\label{e2.37a}
\big\|(\A(\bxi)+\varepsilon^{\alpha}I)^{-1}-
\left(\mu^0 c_0 |\bxi|^\alpha   + \langle g^0 \bxi, \bxi \rangle + \varepsilon^{\alpha} \right)^{-1} P \big\|_{L_2(\Omega) \to L_2(\Omega)} \le
C_{15}  \eps^{1 - \alpha};
\end{equation}
here $\mu^0$ is given by \eqref{2.33}, $g^0$ is the matrix whose entries are defined in  \eqref{3.105},
and the constant $C_{15}$  is expressed in terms of  $d$, $\alpha$, $\mu_-$, $\mu_+$.
\end{theorem}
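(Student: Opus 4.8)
The plan is to deduce the theorem from the already proved bound \eqref{Xi=le} on the compressed difference $\Sigma(\bxi,\eps)$ together with an elementary estimate on the orthogonal complement of the threshold spectral subspace. First I would split the resolvent by means of the spectral projector $F(\bxi)$, writing $F(\bxi)^\perp := I - F(\bxi)$. Since $F(\bxi)$ commutes with $\A(\bxi)$,
\begin{equation*}
(\A(\bxi)+\eps^{\alpha}I)^{-1} = (\A(\bxi)+\eps^{\alpha}I)^{-1}F(\bxi) + (\A(\bxi)+\eps^{\alpha}I)^{-1}F(\bxi)^\perp,
\end{equation*}
so that, recalling the definition \eqref{Xi=} of $\Sigma(\bxi,\eps)$,
\begin{equation*}
(\A(\bxi)+\eps^{\alpha}I)^{-1} - \bigl(\mu^0 c_0|\bxi|^{\alpha} + \langle g^0\bxi,\bxi\rangle + \eps^{\alpha}\bigr)^{-1}P = \Sigma(\bxi,\eps) + (\A(\bxi)+\eps^{\alpha}I)^{-1}F(\bxi)^\perp .
\end{equation*}
For $|\bxi|\le\delta_*$ and $\eps>0$, estimate \eqref{Xi=le} controls the first term on the right by $C_{14}\eps^{1-\alpha}$.

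For the second term I would invoke Proposition \ref{prop2.1}: since $\delta_*\le\delta_0(\alpha,\mu)$, for $|\bxi|\le\delta_*$ the restriction of $\A(\bxi)$ to the range of $F(\bxi)^\perp$ has spectrum contained in $[d_0,\infty)$, whence
\begin{equation*}
\| (\A(\bxi)+\eps^{\alpha}I)^{-1}F(\bxi)^\perp \|_{L_2(\Omega)\to L_2(\Omega)} \le (d_0+\eps^{\alpha})^{-1} \le d_0^{-1}, \quad |\bxi|\le\delta_*,\ \eps>0.
\end{equation*}
Hence the difference in question is bounded by $C_{14}\eps^{1-\alpha} + d_0^{-1}$ for $|\bxi|\le\delta_*$, $\eps>0$.

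It then remains to absorb the constant $d_0^{-1}$ into a quantity of order $\eps^{1-\alpha}$, which I would do by a dichotomy in $\eps$. If $0<\eps\le1$, then $\eps^{1-\alpha}\ge1$ because $1-\alpha<0$, so $d_0^{-1}\le d_0^{-1}\eps^{1-\alpha}$ and the whole difference is at most $(C_{14}+d_0^{-1})\eps^{1-\alpha}$. If $\eps\ge1$, I would discard the decomposition above and use instead the crude bounds $\|(\A(\bxi)+\eps^{\alpha}I)^{-1}\|\le\eps^{-\alpha}$ (which follows from \eqref{e2.39}) and $\bigl(\mu^0 c_0|\bxi|^{\alpha} + \langle g^0\bxi,\bxi\rangle + \eps^{\alpha}\bigr)^{-1}\le\eps^{-\alpha}$ (which follows from \eqref{e2.40}, valid since $|\bxi|\le\delta_*\le\delta_2$), so that the difference is at most $2\eps^{-\alpha}\le 2\eps^{1-\alpha}$, the last step using $\eps\ge1$. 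Setting $C_{15} := \max\{2,\,C_{14}+d_0^{-1}\}$ completes the argument. There is no essential difficulty here: all the substance was already invested in \eqref{Xi=le} and in Propositions \ref{prop2.1} and \ref{prop3.12}; the only point requiring a little care is that the target rate $\eps^{1-\alpha}$ diverges as $\eps\to0$ while the complementary spectral part stays uniformly bounded, which is exactly why the split into small and large $\eps$ is needed.
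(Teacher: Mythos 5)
Your proof is correct and follows essentially the same route as the paper: decompose the resolvent with the spectral projector $F(\bxi)$, use \eqref{Xi=le} for $\Sigma(\bxi,\eps)$, and bound $(\A(\bxi)+\varepsilon^{\alpha}I)^{-1}F(\bxi)^\perp$ via Proposition \ref{prop2.1}. The only cosmetic difference is in absorbing the complementary part into the rate: the paper uses the uniform bound $(d_0+\eps^\alpha)^{-1}\le d_0^{-1/\alpha}\eps^{1-\alpha}$ valid for all $\eps>0$ (yielding $C_{15}=C_{14}+d_0^{-1/\alpha}$), whereas you split into $\eps\le1$ and $\eps\ge1$; both are fine.
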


\begin{proof}[Proof]
The inequality
\begin{equation}\label{e2.38}
\|(\A(\bxi)+\varepsilon^{\alpha}I)^{-1}(I-F(\bxi))\|\le
\frac{1}{d_{0}},\ \ |\bxi|\le\delta_{0}(\alpha,\mu),\ \ \eps >0,
\end{equation}
is a direct consequence of the definition of $F(\bxi)$ and Proposition \ref{prop2.1}.
Then \eqref{Xi=}, \eqref{Xi=le} and \eqref{e2.38} yield \eqref{e2.37a} with the constant
$C_{15} = C_{14} + d_0^{-1/\alpha}$.
\end{proof}

For any $N\geqslant 1$, after straightforward rearrangements we have
\begin{equation}
\label{4.11}
\begin{aligned}
\Big(\mu^0 c_0 |\bxi|^\alpha   + \langle g^0 \bxi, \bxi \rangle + \varepsilon^{\alpha} \Big)^{-1} =
\left(\mu^0 c_0 |\bxi|^\alpha    + \varepsilon^{\alpha} \right)^{-1}
\left( 1 + \frac{\langle g^0 \bxi, \bxi \rangle}{\mu^0 c_0 |\bxi|^\alpha    + \varepsilon^{\alpha}}\right)^{-1}
\\
= \left(\mu^0 c_0 |\bxi|^\alpha    + \varepsilon^{\alpha} \right)^{-1}
\sum_{m=0}^N \left( - \frac{\langle g^0 \bxi, \bxi \rangle}{\mu^0 c_0 |\bxi|^\alpha    + \varepsilon^{\alpha}} \right)^m
+ J_N(\bxi,\eps)
\end{aligned}
\end{equation}
with
\begin{equation}
\label{4.12}
J_N(\bxi,\eps) =
\Big( - \frac{\langle g^0 \bxi, \bxi \rangle}{\mu^0 c_0 |\bxi|^\alpha    + \varepsilon^{\alpha}} \Big)^{N+1}
\Big(\mu^0 c_0 |\bxi|^\alpha   + \langle g^0 \bxi, \bxi \rangle + \varepsilon^{\alpha} \Big)^{-1}.
\end{equation}
For $N$ such that  $2 - \frac{1}{N} < \alpha < 2$ one can estimate the right-hand side of \eqref{4.12} with the help of
 \eqref{3.106} and \eqref{e2.40}:
\begin{equation*}
|J_N(\bxi,\eps)| \le  \left( \frac{C_{12}}{\mu^0 c_0} \right)^{N+1} \frac{|\bxi |^{(N+1)(2-\alpha)}}{\frac{1}{2} \mu_- c_0
 |\bxi|^\alpha    + \varepsilon^{\alpha}}.
 \end{equation*}
If $2 - \frac{1}{N} < \alpha \le 2 - \frac{1}{N+1}$, then
\begin{equation}
\label{4.14}
\begin{aligned}
|J_N(\bxi,\eps)| &\le    \left( \frac{C_{12}}{\mu^0 c_0} \right)^{N+1} \frac{|\bxi |^{(N+1)(2-\alpha) - 1}}{\bigl(\frac{1}{2} \mu_- c_0\bigr)^{1/\alpha} }\eps^{1-\alpha} \le {\mathfrak C}_N \eps^{1-\alpha},\quad 2 - \frac{1}{N} < \alpha \le 2 - \frac{1}{N+1},
\\
{\mathfrak C}_N &= \left( \frac{C_{12}}{\mu^0 c_0} \right)^{N+1}
\frac{\delta_*^{(N+1)(2-\alpha) - 1}}{\bigl(\frac{1}{2} \mu_- c_0\bigr)^{1/\alpha} }.
\end{aligned}
\end{equation}
If $2 - \frac{1}{N+1} < \alpha < 2$, we have
\begin{equation}
\label{4.15}
\begin{aligned}
|J_N(\bxi,\eps)| &\le   {\mathfrak C}'_N \eps^{(N+1)(2-\alpha) - \alpha}, \quad 2 - \frac{1}{N+1} < \alpha < 2,
\\
{\mathfrak C}'_N &= \left( \frac{C_{12}}{\mu^0 c_0} \right)^{N+1}
 \frac{1}{\bigl( \frac{1}{2} \mu_- c_0\bigr)^{(N+1)(2-\alpha)/\alpha}}.
\end{aligned}
\end{equation}

The next statement is a consequence of Theorem \ref{teor2.2} and relations  \eqref{4.11}, \eqref{4.14}, \eqref{4.15}.

\begin{theorem}\label{teor4.4}
Let conditions \eqref{e1.1} and \eqref{e1.2} be satisfied, and assume that  $N \in \N$ is such that $2 - \frac{1}{N} < \alpha < 2$.
Then for any $\varepsilon>0$ and \hbox{$|\bxi|\le\delta_{*}(d, \alpha,\mu)$} the following estimate holds{\rm :}
\begin{equation}\label{e2.37}
\begin{aligned}
\Bigl\|(\A(\bxi)+\varepsilon^{\alpha}I)^{-1}-
\left(\mu^0 c_0 |\bxi|^\alpha   + \varepsilon^{\alpha} \right)^{-1} P
- \sum_{m=1}^N K_m(\bxi,\eps) P \Bigr\|_{L_2(\Omega) \to L_2(\Omega)}
\\
\le
C_{16}(N) \begin{cases}
  \eps^{1 - \alpha}, & 2 - \frac{1}{N} < \alpha \le 2 - \frac{1}{N+1},
  \\
 \eps^{(N+1)(2-\alpha) - \alpha}, & 2 - \frac{1}{N+1} < \alpha < 2.
  \end{cases}
  \end{aligned}
\end{equation}
Here  $\mu^0$ is the constant defined in \eqref{2.33},  $g^0$ is the matrix with entries defined in  \eqref{3.105},
and the functions $K_m(\bxi,\eps)$ are given by
$$
K_m(\bxi,\eps) := (-1)^m   \langle g^0 \bxi, \bxi \rangle^m \left(\mu^0 c_0 |\bxi|^\alpha   + \varepsilon^{\alpha} \right)^{-m-1}, \quad m=1,\dots, N.
$$
The constant $C_{16}(N)$  is expressed in terms of the parameters $d$, $\alpha$, $\mu_-$, $\mu_+$ and $N$.
\end{theorem}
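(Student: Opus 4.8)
The plan is to derive Theorem \ref{teor4.4} from the two–term threshold approximation of Theorem \ref{teor2.2} by a purely algebraic expansion of the scalar factor $\bigl(\mu^0 c_0 |\bxi|^\alpha + \langle g^0 \bxi, \bxi \rangle + \eps^\alpha\bigr)^{-1}$ into powers of $\langle g^0 \bxi, \bxi \rangle\bigl(\mu^0 c_0 |\bxi|^\alpha + \eps^\alpha\bigr)^{-1}$, followed by the elementary remainder estimates \eqref{4.14}, \eqref{4.15} that have already been prepared in the text. No new analytic input is needed.

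First I would record that, since $|\bxi|\le\delta_*\le\delta_2$, the bound \eqref{3.106} together with the definition \eqref{e4.3} of $\delta_2$ gives $|\langle g^0 \bxi, \bxi \rangle|\le\tfrac12\mu^0 c_0|\bxi|^\alpha$, so the ratio $\langle g^0 \bxi, \bxi \rangle\bigl(\mu^0 c_0|\bxi|^\alpha+\eps^\alpha\bigr)^{-1}$ has modulus at most $\tfrac12$ and the finite geometric identity \eqref{4.11} is legitimate. Its $m$-th summand, $m=0,\dots,N$, equals $(-1)^m\langle g^0 \bxi, \bxi \rangle^m\bigl(\mu^0 c_0|\bxi|^\alpha+\eps^\alpha\bigr)^{-m-1}$, i.e. $\bigl(\mu^0 c_0|\bxi|^\alpha+\eps^\alpha\bigr)^{-1}$ for $m=0$ and $K_m(\bxi,\eps)$ for $m\ge1$; multiplying \eqref{4.11} by $P$ thus yields
$$
\bigl(\mu^0 c_0|\bxi|^\alpha+\langle g^0 \bxi, \bxi \rangle+\eps^\alpha\bigr)^{-1}P=\bigl(\mu^0 c_0|\bxi|^\alpha+\eps^\alpha\bigr)^{-1}P+\sum_{m=1}^N K_m(\bxi,\eps)P+J_N(\bxi,\eps)P,
$$
with $J_N$ as in \eqref{4.12}. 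Subtracting this identity inside the norm of Theorem \ref{teor2.2} and using $\|P\|=1$ reduces the assertion to
$$
\Bigl\|(\A(\bxi)+\eps^\alpha I)^{-1}-\bigl(\mu^0 c_0|\bxi|^\alpha+\eps^\alpha\bigr)^{-1}P-\sum_{m=1}^N K_m(\bxi,\eps)P\Bigr\|\le C_{15}\eps^{1-\alpha}+|J_N(\bxi,\eps)|.
$$

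It then remains to bound $|J_N(\bxi,\eps)|$, which is exactly the content of \eqref{4.14}, \eqref{4.15}. In the regime $2-\tfrac1N<\alpha\le2-\tfrac1{N+1}$ the exponent $(N+1)(2-\alpha)-1$ is nonnegative, so $|\bxi|^{(N+1)(2-\alpha)-1}\le\delta_*^{(N+1)(2-\alpha)-1}$ and \eqref{4.14} gives $|J_N|\le{\mathfrak C}_N\eps^{1-\alpha}$, which delivers the first line of \eqref{e2.37} with $C_{16}(N)=C_{15}+{\mathfrak C}_N$. In the regime $2-\tfrac1{N+1}<\alpha<2$ one instead keeps $|J_N|\le{\mathfrak C}'_N\eps^{(N+1)(2-\alpha)-\alpha}$ from \eqref{4.15}. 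Here the only point that needs a word of care is that this error exponent $(N+1)(2-\alpha)-\alpha$ is strictly below $1-\alpha$, so $\eps^{1-\alpha}\le\eps^{(N+1)(2-\alpha)-\alpha}$ holds only for $\eps\le1$; for $\eps\ge1$ I would instead bound the left-hand side crudely by $\|(\A(\bxi)+\eps^\alpha I)^{-1}\|\le\eps^{-\alpha}$, $\bigl(\mu^0 c_0|\bxi|^\alpha+\eps^\alpha\bigr)^{-1}\le\eps^{-\alpha}$ and $|K_m(\bxi,\eps)|\le(C_{12}\delta_*^2)^m\eps^{-\alpha}$, and note that $-\alpha\le(N+1)(2-\alpha)-\alpha$ makes this again $O\bigl(\eps^{(N+1)(2-\alpha)-\alpha}\bigr)$ for $\eps\ge1$; taking the larger of the two constants finishes the proof.

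I do not expect a genuine obstacle in this argument: all the analytic difficulty has already been absorbed into Theorem \ref{teor2.2} and Proposition \ref{prop3.12}, and the present step is bookkeeping around a finite geometric series plus the scalar estimates \eqref{4.14}, \eqref{4.15}. If anything plays the role of the ``hard part'', it is merely the routine verification that the two distinct error exponents in the two regimes combine into a single uniform-in-$\eps>0$ bound, which is handled by the $\eps\ge1$ aside above.
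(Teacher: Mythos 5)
Your argument is correct and follows essentially the same route as the paper, which obtains \eqref{e2.37} precisely as a consequence of Theorem \ref{teor2.2} together with the finite geometric expansion \eqref{4.11} and the remainder bounds \eqref{4.14}, \eqref{4.15}. Your explicit treatment of the regime $\eps\ge 1$ in the case $2-\frac{1}{N+1}<\alpha<2$ (where $\eps^{1-\alpha}$ is not dominated by $\eps^{(N+1)(2-\alpha)-\alpha}$), via the crude bounds $\|(\A(\bxi)+\eps^{\alpha}I)^{-1}\|\le\eps^{-\alpha}$ and the analogous bounds for the scalar terms, correctly fills a step the paper leaves implicit.
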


The  {\it effective operator\/} $\A^{0}$ is introduced as the self-adjoint operator generated by the quadratic
form in \eqref{e1.3} with the constant coefficient $\mu^0$  defined in
\eqref{2.33}:
\begin{equation}
\label{eff_op}
\A^{0}:= \A(\alpha,\mu^0) = \mu^0 \A_0(\alpha) = \mu^0 c_0(d,\alpha) (- \Delta)^\gamma,
\quad \operatorname{Dom} \A^0 = H^{\alpha}(\R^d).
\end{equation}
With the help of the unitary Gelfand transform the operator  $\A^0$ is decomposed into a direct integral
\begin{equation}
\label{direct_int}
\A^0 = {\mathcal G}^*\Bigl( \int_{\wt{\Omega}} \oplus \A^0(\bxi)\,d\bxi  \Bigr) {\mathcal G}.
\end{equation}
Here
\begin{equation}
\label{eff_op_xi}
\A^0(\bxi) = \A(\bxi;\alpha,\mu^0) = \mu^0 \A_0(\bxi;\alpha) = \mu^0 c_0(d,\alpha) | \D + \bxi|^{\alpha},  \quad \Dom{\mathbb A}^0(\bxi) =  \wt{H}^{\alpha}(\Omega);
\end{equation}
 see \eqref{e1.14}.

The following result which is based on Theorem  \ref{teor2.1} has been obtained in  \cite[Thm 4.3]{JPSS24}:

\begin{theorem}[\cite{JPSS24}]
\label{teor4.5}
Let conditions \eqref{e1.1} and \eqref{e1.2} be fulfilled, and assume that  $1 < \alpha < 2$.
Then for all $\varepsilon>0$ and $\bxi \in \wt{\Omega}$ the estimate
\begin{equation*}
\left\|(\A(\bxi)+\varepsilon^{\alpha}I)^{-1} - (\A^0(\bxi)+\varepsilon^{\alpha}I)^{-1}
\right\|_{L_2(\Omega) \to L_2(\Omega)}\le
{\mathrm C}_1(\alpha,\mu)  \eps^{2 - 2\alpha}
\end{equation*}
holds. The constant  ${\mathrm C}_1(\alpha,\mu)$ is expressed in terms of the parameters
$d$, $\alpha$, $\mu_-$ and $\mu_+$.
\end{theorem}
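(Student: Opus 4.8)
The plan is to deduce the estimate from the threshold approximation of Theorem~\ref{teor2.1}, together with the fact that the effective operator $\A^0(\bxi)$ is explicitly diagonalizable. Indeed, by \eqref{eff_op_xi} and \eqref{e1.16} the operator $\A^0(\bxi)=\mu^0 c_0(d,\alpha)|\D+\bxi|^\alpha$ has eigenvalues $\mu^0 c_0(d,\alpha)|2\pi\n+\bxi|^\alpha$, $\n\in\Z^d$; hence it commutes with the projection $P=(\cdot,\1_\Omega)\1_\Omega$, its restriction to $P L_2(\Omega)$ equals $\mu^0 c_0(d,\alpha)|\bxi|^\alpha P$, and by \eqref{1.18b} its restriction to $(I-P)L_2(\Omega)$ is bounded below by $\mu^0 c_0(d,\alpha)\pi^\alpha I$ for every $\bxi\in\wt\Omega$. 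Consequently
\begin{equation*}
\bigl\|(\A^0(\bxi)+\varepsilon^{\alpha}I)^{-1}-\bigl(\mu^0 c_0(d,\alpha)|\bxi|^\alpha+\varepsilon^{\alpha}\bigr)^{-1}P\bigr\|_{L_2(\Omega)\to L_2(\Omega)}=\bigl\|(\A^0(\bxi)+\varepsilon^{\alpha}I)^{-1}(I-P)\bigr\|\le\bigl(\mu^0 c_0(d,\alpha)\pi^\alpha\bigr)^{-1}
\end{equation*}
uniformly in $\varepsilon>0$ and $\bxi\in\wt\Omega$. Thus, up to a uniformly bounded error, the same scalar quantity $\bigl(\mu^0 c_0(d,\alpha)|\bxi|^\alpha+\varepsilon^{\alpha}\bigr)^{-1}P$ approximates both resolvents.

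I would then split the range of the quasi-momentum. For $|\bxi|\le\delta_0(\alpha,\mu)$, Theorem~\ref{teor2.1} gives $\|(\A(\bxi)+\varepsilon^{\alpha}I)^{-1}-(\mu^0 c_0(d,\alpha)|\bxi|^\alpha+\varepsilon^{\alpha})^{-1}P\|\le C_{13}\varepsilon^{2-2\alpha}$; combining it with the previous display by the triangle inequality yields, for $|\bxi|\le\delta_0(\alpha,\mu)$,
\begin{equation*}
\bigl\|(\A(\bxi)+\varepsilon^{\alpha}I)^{-1}-(\A^0(\bxi)+\varepsilon^{\alpha}I)^{-1}\bigr\|_{L_2(\Omega)\to L_2(\Omega)}\le C_{13}\varepsilon^{2-2\alpha}+\bigl(\mu^0 c_0(d,\alpha)\pi^\alpha\bigr)^{-1}.
\end{equation*}
For $\delta_0(\alpha,\mu)<|\bxi|\le\pi\sqrt d$ (recall $\bxi\in\wt\Omega$), Proposition~\ref{prop1.5}, applied to $\A(\bxi)$ and, with $\mu\equiv\mu^0$, to $\A^0(\bxi)=\A(\bxi;\alpha,\mu^0)$, shows that both operators are bounded below by $\mu_- c_0(d,\alpha)\delta_0(\alpha,\mu)^\alpha I$; hence both resolvents, and so their difference, are bounded by a constant depending only on $d,\alpha,\mu_-,\mu_+$.

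It remains to absorb the $O(1)$ contributions into the claimed rate $\varepsilon^{2-2\alpha}$. Since $1<\alpha<2$ one has $2-2\alpha<0$, so for $0<\varepsilon\le1$ one has $\varepsilon^{2-2\alpha}\ge1$, and every constant bound from the two cases above is $\le(\mathrm{const})\,\varepsilon^{2-2\alpha}$. For $\varepsilon>1$ I would instead use the trivial bounds $\|(\A(\bxi)+\varepsilon^{\alpha}I)^{-1}\|\le\varepsilon^{-\alpha}$ and $\|(\A^0(\bxi)+\varepsilon^{\alpha}I)^{-1}\|\le\varepsilon^{-\alpha}$, which give a difference $\le2\varepsilon^{-\alpha}=2\varepsilon^{\alpha-2}\varepsilon^{2-2\alpha}\le2\varepsilon^{2-2\alpha}$ as $\alpha<2$. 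Taking ${\mathrm C}_1(\alpha,\mu)$ to be the maximum of the finitely many constants produced above finishes the argument. There is no real obstacle here: the whole content of Theorem~\ref{teor4.5} is carried by Theorem~\ref{teor2.1}, the only additional point being the observation that for the constant-coefficient operator $\A^0(\bxi)$ the threshold approximation has a \emph{uniformly} bounded error, which is immediate from its explicit diagonalization.
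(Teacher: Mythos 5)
Your proposal is correct and follows essentially the route the paper indicates: Theorem \ref{teor4.5} is deduced from the threshold approximation of Theorem \ref{teor2.1} together with the explicit diagonalization of $\A^0(\bxi)$ (so that $(\A^0(\bxi)+\eps^\alpha I)^{-1}P=(\mu^0 c_0|\bxi|^\alpha+\eps^\alpha)^{-1}P$ and the $(I-P)$-part is controlled via $|2\pi\n+\bxi|\ge\pi$), splitting $\bxi$ into the region $|\bxi|\le\delta_0$ and its complement — exactly the scheme the paper itself uses in the proof of Theorem \ref{teor4.6}. Your slightly cruder handling of the $O(1)$ terms (absorbing them via $\eps^{2-2\alpha}\ge 1$ for $\eps\le1$ and the trivial bound $2\eps^{-\alpha}\le2\eps^{2-2\alpha}$ for $\eps>1$) is valid and changes nothing essential.
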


Here, relying on Theorem \ref{teor4.4}, we provide more accurate approximation of the resolvent
$(\A(\bxi)+\varepsilon^{\alpha}I)^{-1}$. To this end, assuming that  $2 - \frac{1}{N} < \alpha < 2$, we exploit the \emph{correctors}  ${\mathbb K}_m(\eps)$, $m=1,\dots,N$, which are defined as the bounded self-adjoint operators in
$L_2(\R^d)$ given by
\begin{equation}
\label{e4.21}
{\mathbb K}_m(\eps) :=  ( \operatorname{div} g^0 \nabla)^m (\A^0+\varepsilon^{\alpha}I)^{-m-1},
\quad m=1,\dots,N.
\end{equation}
Notice that ${\mathbb K}_m(\eps)$  is a pseudo-differential operator of order $2m-(m+1)\alpha$. Since
 $\alpha > 2 - \frac{1}{N}$, the order $2m-(m+1)\alpha$ is negative for $m=1,\dots,N$.
By means of the Gelfand transform the operatop ${\mathbb K}_m(\eps)$ is decomposed into a direct integral
\begin{equation}
\label{direct_int_Km}
{\mathbb K}_m(\eps) = {\mathcal G}^*\Bigl( \int_{\wt{\Omega}} \oplus {\mathbb K}_m(\bxi,\eps)\,d\bxi  \Bigr) {\mathcal G}, \quad m=1,\dots,N.
\end{equation}
with
\begin{equation}
\label{Km_xi}
{\mathbb K}_m(\bxi,\eps) := (-1)^m \left((\D + \bxi)^* g^0 (\D + \bxi) \right)^m \!(\A^0(\bxi)+\varepsilon^{\alpha}I)^{-m-1}\!, \ \,  \Dom {\mathbb K}_m(\bxi,\eps) \!= \!L_2(\Omega).
\end{equation}

\begin{theorem}
\label{teor4.6}
Let conditions \eqref{e1.1} and \eqref{e1.2} be satisfied, and assume that $N \in \N$ is such that $2 - \frac{1}{N} < \alpha < 2$.
Then for all $\varepsilon>0$ and $\bxi \in \wt{\Omega}$ the following estimate holds{\rm :}
\begin{equation}
\label{e4.23}
\begin{aligned}
\Bigl\|(\A(\bxi)+\varepsilon^{\alpha}I)^{-1}- (\A^0(\bxi)+\varepsilon^{\alpha}I)^{-1} - \sum_{m=1}^N
{\mathbb K}_m(\bxi,\eps) \Bigr\|_{L_2(\Omega) \to L_2(\Omega)}
\\
\le {\mathrm C}_2(N,\alpha,\mu)
 \begin{cases}
  \eps^{1 - \alpha}, & 2 - \frac{1}{N} < \alpha \le 2 - \frac{1}{N+1},
  \\
 \eps^{(N+1)(2-\alpha) - \alpha}, & 2 - \frac{1}{N+1} < \alpha < 2,
  \end{cases}
\end{aligned}
\end{equation}
where the operators $\A^0(\bxi)$ and  ${\mathbb K}_m(\bxi,\eps)$, $m=1,\dots,N,$ are defined
in \eqref{eff_op_xi} and \eqref{Km_xi}, respectively, and the constant ${\mathrm C}_2(N,\alpha,\mu)$
is expressed in terms of the parameters $d$, $\alpha$, $\mu_-$, $\mu_+$ and $N$.
\end{theorem}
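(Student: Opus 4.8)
The plan is to obtain Theorem~\ref{teor4.6} from the threshold approximation of Theorem~\ref{teor4.4}, by identifying the two ``scalar'' objects that occur there---the rank-one operator $(\mu^0 c_0|\bxi|^\alpha+\varepsilon^\alpha)^{-1}P$ and the scalar correctors $K_m(\bxi,\eps)P$---with the restrictions to $\Ran P$ of the intrinsic effective objects $(\A^0(\bxi)+\varepsilon^\alpha I)^{-1}$ and ${\mathbb K}_m(\bxi,\eps)$, and then estimating the ``high-frequency'' remainder produced by this identification. First I would split $\wt\Omega$ into the region $|\bxi|\le\delta_*(d,\alpha,\mu)$, where Theorem~\ref{teor4.4} is available, and the region $|\bxi|>\delta_*$, where the spectral gap makes matters elementary.

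On $\{|\bxi|\le\delta_*\}$ the main point is that both $\A^0(\bxi)=\mu^0 c_0|\D+\bxi|^\alpha$ and the (in general indefinite) operator $(\D+\bxi)^*g^0(\D+\bxi)$ are diagonal in the exponential basis $\{e^{2\pi i\langle\n,\x\rangle}\}_{\n\in\Z^d}$, with eigenvalues $\mu^0 c_0|2\pi\n+\bxi|^\alpha$ and $\langle g^0(2\pi\n+\bxi),2\pi\n+\bxi\rangle$ respectively; hence they commute, the operator ${\mathbb K}_m(\bxi,\eps)$ of \eqref{Km_xi} is diagonal in the same basis, and the index $\n=\mathbf{0}$ corresponds to $\Ran P=\mathcal{L}\{\mathbf{1}_\Omega\}$. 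Using $\D\mathbf{1}_\Omega=0$ this gives the identities $(\A^0(\bxi)+\varepsilon^\alpha I)^{-1}P=(\mu^0 c_0|\bxi|^\alpha+\varepsilon^\alpha)^{-1}P$ and ${\mathbb K}_m(\bxi,\eps)P=K_m(\bxi,\eps)P$, so that the operator under the norm in \eqref{e4.23} equals the one in \eqref{e2.37} minus the tail $(\A^0(\bxi)+\varepsilon^\alpha I)^{-1}P^\perp+\sum_{m=1}^N{\mathbb K}_m(\bxi,\eps)P^\perp$. Theorem~\ref{teor4.4} disposes of the first part; for the tail I would use $\min_{\n\ne\mathbf{0}}|2\pi\n+\bxi|\ge\pi$ (see \eqref{1.18b}), which bounds $\|(\A^0(\bxi)+\varepsilon^\alpha I)^{-1}P^\perp\|$ by $(\mu^0 c_0\pi^\alpha+\varepsilon^\alpha)^{-1}$, together with the elementary estimate $|\langle g^0\k,\k\rangle|\le|g^0|\,|\k|^2$, which bounds $\|{\mathbb K}_m(\bxi,\eps)P^\perp\|$ by $|g^0|^m(\mu^0 c_0)^{-m-1}\sup_{t\ge\pi}t^{2m-(m+1)\alpha}$; the supremum is finite because $2m-(m+1)\alpha<0$ for $1\le m\le N$ under the standing hypothesis $\alpha>2-\tfrac1N$. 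Optimizing the same expression over $t>0$ instead of restricting to $t\ge\pi$ shows in addition that each tail is $O(\varepsilon^{-\alpha})+\sum_{m=1}^N O(\varepsilon^{2m-(m+1)\alpha})$.

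On $\{|\bxi|>\delta_*\}$ I would instead observe that, by Proposition~\ref{prop1.5} and \eqref{eff_op_xi}, $\A(\bxi)\ge\mu_- c_0\delta_*^\alpha I$ and $\A^0(\bxi)\ge\mu^0 c_0\delta_*^\alpha I$, so $\|(\A(\bxi)+\varepsilon^\alpha I)^{-1}\|$ and $\|(\A^0(\bxi)+\varepsilon^\alpha I)^{-1}\|$ are $\le\min\{\mathrm{const},\varepsilon^{-\alpha}\}$, while the same diagonalization together with $|2\pi\n+\bxi|\ge|\bxi|>\delta_*$ gives $\|{\mathbb K}_m(\bxi,\eps)\|\le\min\{\mathrm{const},\mathrm{const}\cdot\varepsilon^{2m-(m+1)\alpha}\}$. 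Thus on this region the whole expression in \eqref{e4.23} is bounded by a constant and, at the same time, by $C\bigl(\varepsilon^{-\alpha}+\sum_{m=1}^N\varepsilon^{2m-(m+1)\alpha}\bigr)$.

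The closing step is purely arithmetical: one checks that every bound produced above is dominated by the claimed rate $r(\eps)$, which equals $\varepsilon^{1-\alpha}$ when $2-\tfrac1N<\alpha\le2-\tfrac1{N+1}$ and $\varepsilon^{(N+1)(2-\alpha)-\alpha}$ when $2-\tfrac1{N+1}<\alpha<2$---and in both cases is exactly the exponent already furnished by Theorem~\ref{teor4.4} for its part. The exponent of $r(\eps)$ is negative in every case, so $r(\eps)\ge1$ for $\eps\le1$, which absorbs all constant terms; for $\eps\ge1$ one has $r(\eps)\le1$, and one uses that the auxiliary exponents $-\alpha$ and $2m-(m+1)\alpha$ ($1\le m\le N$) are all $\le$ the exponent of $r(\eps)$---this is precisely where $\alpha>2-\tfrac1N$, equivalently $\alpha\ge2-\tfrac1m$ for $m\le N$, is needed. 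I expect this exponent bookkeeping to be the only delicate point; the substantive input---Theorem~\ref{teor4.4} itself and the Section~3 estimates for $g^0$, $F(\bxi)$ and $\A(\bxi)F(\bxi)$---is already in place, so no new obstacle arises.
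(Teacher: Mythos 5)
Your proposal is correct and follows essentially the same route as the paper: invoke Theorem~\ref{teor4.4}, use the diagonalization identities $(\A^0(\bxi)+\eps^\alpha I)^{-1}P=(\mu^0 c_0|\bxi|^\alpha+\eps^\alpha)^{-1}P$ and ${\mathbb K}_m(\bxi,\eps)P=K_m(\bxi,\eps)P$, control the $P^\perp$ tails via $|2\pi\n+\bxi|\ge\pi$, and handle $|\bxi|>\delta_*$ by crude spectral-gap bounds, with the same exponent bookkeeping driven by $\alpha>2-\tfrac1N$. The only differences are cosmetic (you split into $\eps\le1$ and $\eps\ge1$ where the paper uses interpolation-type inequalities uniform in $\eps$, and you order the steps slightly differently), so no gap arises.
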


\begin{proof}[Proof]
For  $ |\bxi| \le \delta_*$ we use estimate \eqref{e2.37}.
For $ |\bxi| > \delta_*$ it is sufficient to estimate each term under the norm sign on the left-hand side of  \eqref{e2.37}  separately.
By \eqref{e2.39}, we have
$$
\left\|(\A(\bxi)+\varepsilon^{\alpha}I)^{-1} \right\| \le (\mu_- c_0\delta_*^\alpha)^{-1},
\quad \eps>0, \quad \bxi \in \wt{\Omega}, \quad |\bxi| > \delta_*.
$$
It is then clear that
$$
 \left( \mu^0 c_0 |\bxi|^{\alpha} + \eps^\alpha \right)^{-1}  \le (\mu_- c_0\delta_*^\alpha)^{-1},
 \quad \eps >0,\quad \bxi \in \wt{\Omega}, \quad |\bxi| >  \delta_*.
$$
Let us estimate  $|K_m (\bxi,\eps)|$, $m=1,\dots,N,$ for $|\bxi| \ge \delta_*$.

If $2 - \frac{1}{N} < \alpha \le 2 - \frac{1}{N+1}$, then, using the evident estimate
$$
\left\|(\A(\bxi)+\varepsilon^{\alpha}I)^{-1} \right\| \le
(\mu_- c_0\delta_*^\alpha)^{-1/\alpha} (\mu_- c_0 |\bxi|^\alpha + \eps^\alpha)^{1/\alpha -1}
\le (\mu_- c_0)^{-1/\alpha}\delta_*^{-1} \eps^{1-\alpha},
$$
and considering  \eqref{3.106}  we obtain
$$
\begin{aligned}
|K_m (\bxi,\eps)| &\le \frac{C_{12}^m |\bxi|^{2m}}{(\mu_- c_0 |\bxi|^\alpha)^{2m/\alpha} }
(\mu_- c_0 |\bxi|^\alpha + \eps^\alpha)^{-m-1 + 2m/\alpha}
\\
&= \frac{C_{12}^m }{(\mu_- c_0 )^{2m/\alpha} }
(\mu_- c_0 |\bxi|^\alpha + \eps^\alpha)^{1/\alpha -1}
(\mu_- c_0 \delta_*^\alpha )^{-m + (2m-1)/\alpha}
\\
&\le\frac{C_{12}^m }{(\mu_- c_0 )^{m +1/\alpha}  \delta_*^{m(2-\alpha) -1}}
\eps^{1-\alpha}.
\end{aligned}
$$
If $2 - \frac{1}{N+1} < \alpha < 2$, then
$$
\begin{aligned}
\left\|(\A(\bxi)+\varepsilon^{\alpha}I)^{-1} \right\| & \le
(\mu_- c_0\delta_*^\alpha)^{-(N+1)(2-\alpha)/\alpha} (\mu_- c_0 |\bxi|^\alpha + \eps^\alpha)^{(N+1)(2-\alpha)/\alpha -1}
\\
& \le (\mu_- c_0)^{-(N+1)(2-\alpha)/\alpha} \delta_*^{-(N+1)(2-\alpha)} \eps^{(N+1)(2-\alpha) -\alpha};
\end{aligned}
$$
here we have also used the inequality $(N+1)(2-\alpha) < \alpha$.  This yields
$$
\begin{aligned}
|K_m (\bxi,\eps)| &\le
 \frac{C_{12}^m }{(\mu_- c_0 )^{2m/\alpha} }
(\mu_- c_0 |\bxi|^\alpha + \eps^\alpha)^{(N+1)(2-\alpha)/\alpha -1}
(\mu_- c_0 \delta_*^\alpha)^{-m + 2m/\alpha - (N+1)(2-\alpha)/\alpha}
\\
&\le  \frac{C_{12}^m }{(\mu_- c_0 )^{m +(N+1)(2-\alpha)/\alpha}  \delta_*^{(2-\alpha)(N+1-m)}}
\eps^{(N+1)(2 -\alpha) - \alpha}.
\end{aligned}
$$
The operator  $ \left( \mu^0 c_0 |\bxi|^{\alpha} + \eps^\alpha \right)^{-1} P$ can be estimated in the same way.

It follows from the above relations that estimate \eqref{e2.37} remains valid for $\bxi \in \wt{\Omega}$, $|\bxi| \ge \delta_*$.
Finally, for all  $\bxi \in \wt{\Omega}$ and $\eps > 0$ it holds that
\begin{equation}\label{e4.25}
\begin{aligned}
\Bigl\|(\A(\bxi)+\varepsilon^{\alpha}I)^{-1}-
\left(\mu^0 c_0 |\bxi|^\alpha   + \varepsilon^{\alpha} \right)^{-1} P
- \sum_{m=1}^N K_m(\bxi,\eps) P \Bigr\|_{L_2(\Omega) \to L_2(\Omega)}
\\
\le
\wt{C}_{16}(N) \begin{cases}
  \eps^{1 - \alpha}, & 2 - \frac{1}{N} < \alpha \le 2 - \frac{1}{N+1},
  \\
 \eps^{(N+1)(2-\alpha) - \alpha}, & 2 - \frac{1}{N+1} < \alpha < 2,
  \end{cases}
  \end{aligned}
\end{equation}
where the constant $\wt{C}_{16}(N)$ depends only on  $d$, $\alpha$, $\mu_-$, $\mu_+$ and $N$.

Next, due to the evident identity
\begin{equation*}
\label{A0_P}
\A^0(\bxi) P = \mu^0 c_0 |\bxi|^\alpha P,
\end{equation*}
we have
\begin{equation*}
\label{res0_P}
(\A^0(\bxi) + \eps^\alpha I)^{-1}P = \left( \mu^0 c_0 |\bxi|^\alpha  + \eps^\alpha \right)^{-1} P.
\end{equation*}
Similarly,
\begin{equation*}
\label{Km_P}
\mathbb{K}_m(\bxi, \eps) P = K_m(\bxi, \eps) P,\quad m=1,\dots,N.
\end{equation*}
As a result, the relation \eqref{e4.25} can be rewritten in the form
\begin{equation}\label{e4.26}
\begin{aligned}
\Bigl\|(\A(\bxi)+\varepsilon^{\alpha}I)^{-1}- (\A^0(\bxi)+\varepsilon^{\alpha}I)^{-1} P
- \sum_{m=1}^N \mathbb{K}_m(\bxi,\eps) P \Bigr\|_{L_2(\Omega) \to L_2(\Omega)}
\\
\le
\wt{C}_{16} \begin{cases}
  \eps^{1 - \alpha}, & 2 - \frac{1}{N} < \alpha \le 2 - \frac{1}{N+1},
  \\
 \eps^{(N+1)(2-\alpha) - \alpha}, & 2 - \frac{1}{N+1} < \alpha < 2,
  \end{cases}
  \end{aligned}
\end{equation}
for all $\varepsilon>0$ and $\bxi \in \wt{\Omega}$.

Since $|2\pi \n+\bxi| \ge \pi$ for all $\bxi \in \wt{\Omega}$
and $0 \ne \n \in \Z^d$, then,
by means of the discrete Fourier transform, we deduce that
\begin{equation*}
\left\| (\A^0(\bxi)+\varepsilon^{\alpha}I)^{-1}(I-P) \right\| = \sup_{0 \ne \n \in \Z^d}
(\mu^0 c_0 |2\pi \n+\bxi|^\alpha + \eps^\alpha )^{-1}
\le (\mu_- c_0 \pi^\alpha + \eps^\alpha)^{-1};
\end{equation*}
here we have taken into account \eqref{e1.16}.
From this inequality, for $\alpha\in\big(2-\frac1N,2-\frac 1{N+1}\big]$,  we derive the upper bound
$$
\left\| (\A^0(\bxi)+\varepsilon^{\alpha}I)^{-1}(I-P) \right\|
\le (\mu_- c_0 \pi^\alpha)^{-1/\alpha} (\mu_- c_0 \pi^\alpha + \eps^\alpha)^{1/\alpha -1} \le
(\mu_- c_0 )^{-1/\alpha} \pi^{-1} \eps^{1 - \alpha}
$$
and, for  $\alpha\in \big(2 - \frac{1}{N+1},2\big)$,   the upper bound
$$
\begin{aligned}
\left\| (\A^0(\bxi)+\varepsilon^{\alpha}I)^{-1}(I-P) \right\|
&\le (\mu_- c_0 \pi^\alpha)^{-(N+1)(2-\alpha)/\alpha} (\mu_- c_0 \pi^\alpha + \eps^\alpha)^{(N+1)(2-\alpha)/\alpha  -1} \\
&\le
(\mu_- c_0 )^{-(N+1)(2-\alpha)/\alpha} \pi^{-(N+1)(2-\alpha)} \eps^{(N+1)(2-\alpha) - \alpha}.
\end{aligned}
$$

The discrete Fourier transform is also used in order to estimate the norm $\|\mathbb{K}_m(\bxi,\eps) (I-P)\|$
for  $m=1,\dots,N$:
\begin{equation*}
\begin{aligned}
\left\|  \mathbb{K}_m(\bxi,\eps) (I-P) \right\| &=
\sup_{0 \ne \n \in \Z^d} |\langle g^0 (2\pi \n+\bxi), 2\pi \n+\bxi\rangle|^m
(\mu^0 c_0 |2\pi \n+\bxi|^\alpha + \eps^\alpha )^{-m-1}
\\
&\le   \frac{C_{12}^m}{(\mu_- c_0)^{2m/\alpha}} (\mu_- c_0 \pi^\alpha + \eps^\alpha)^{-m -1 + 2m/\alpha}.
\end{aligned}
\end{equation*}
For $2 - \frac{1}{N} < \alpha \le 2 - \frac{1}{N+1}$ this yields
$$
\begin{aligned}
\left\|  \mathbb{K}_m(\bxi,\eps) (I-P) \right\| &\le
\frac{C_{12}^m}{(\mu_- c_0)^{2m/\alpha}} (\mu_- c_0 \pi^\alpha)^{-m  + (2m-1)/\alpha}
(\mu_- c_0 \pi^\alpha + \eps^\alpha)^{1/\alpha -1}
\\
& \le
\frac{C_{12}^m}{(\mu_- c_0)^{m + 1/\alpha}  \pi^{1 - m(2-\alpha)}}
\eps^{1 -\alpha},
\end{aligned}
$$
and, for  $2 - \frac{1}{N+1} < \alpha < 2$,
$$
\begin{aligned}
\left\|  \mathbb{K}_m(\bxi,\eps) (I-P) \right\| & \le
\frac{C_{12}^m}{(\mu_- c_0)^{2m/\alpha}} (\mu_- c_0 \pi^\alpha)^{-m  + 2m/\alpha - (N+1)(2-\alpha)/\alpha}
(\mu_- c_0 \pi^\alpha + \eps^\alpha)^{(N+1)(2-\alpha)/\alpha -1}
\\
& \le
\frac{C_{12}^m}{(\mu_- c_0)^{m+1/\alpha} \pi^{(N+1 -m)(2-\alpha)}}
\eps^{(N+1)(2-\alpha) -\alpha}.
\end{aligned}
$$
Finally, the desired inequality \eqref{e4.23} follows from the above estimates for the norms of operators
$(\A^0(\bxi)+\varepsilon^{\alpha}I)^{-1}(I-P)$ and
$ \mathbb{K}_m(\bxi,\eps) (I-P)$, $m=1,\dots,N$, and from \eqref{e4.26}.
\end{proof}

\subsection{Approximation of the resolvent  $(\A+\varepsilon^{\alpha}I)^{-1}$}

The following result was obtained in \cite[Theorem 4.4]{JPSS24} as a consequence of decompositions
\eqref{e1.15}, \eqref{direct_int} and Theorem  \ref{teor4.5}.

\begin{theorem}[\cite{JPSS24}]
\label{teor4.7}
Let conditions \eqref{e1.1}, \eqref{e1.2} be satisfied, and assume that $1 < \alpha < 2$.
Then for all $\eps >0$ we have
\begin{equation*}
\left\|(\A+\varepsilon^{\alpha}I)^{-1}- (\A^0+\varepsilon^{\alpha}I)^{-1}
\right\|_{L_2(\R^d) \to L_2(\R^d)}\le
{\mathrm C}_1(\alpha,\mu)  \eps^{2 - 2\alpha},
\end{equation*}
where $\A^0$ is the homogenized operator introduced in  \eqref{eff_op},
and the constant  ${\mathrm C}_1(\alpha,\mu)$ is expressed in terms of the parameters $d$, $\alpha$, $\mu_-$
and $\mu_+$.
\end{theorem}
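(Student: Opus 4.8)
The plan is to reduce the estimate to its fiber version, Theorem~\ref{teor4.5}, by means of the Gelfand decomposition. First I would recall that both operators are $\Z^d$-periodic, so by \eqref{e1.15} and \eqref{direct_int} they decompose into direct integrals over $\wt\Omega$:
$$
\A = {\mathcal G}^*\Bigl(\int_{\wt\Omega}^\oplus \A(\bxi)\,d\bxi\Bigr){\mathcal G}, \qquad
\A^0 = {\mathcal G}^*\Bigl(\int_{\wt\Omega}^\oplus \A^0(\bxi)\,d\bxi\Bigr){\mathcal G}.
$$
Since ${\mathcal G}$ is unitary, and since $-\eps^\alpha$ lies in the resolvent set of every fiber operator (both $\A(\bxi)$ and $\A^0(\bxi)$ are non-negative self-adjoint, so that $\|(\A(\bxi)+\eps^\alpha I)^{-1}\|\le\eps^{-\alpha}$ and likewise for $\A^0(\bxi)$), the resolvents are themselves decomposable:
$$
(\A + \eps^\alpha I)^{-1} = {\mathcal G}^*\Bigl(\int_{\wt\Omega}^\oplus (\A(\bxi) + \eps^\alpha I)^{-1}\,d\bxi\Bigr){\mathcal G},
$$
and analogously for $\A^0$.

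Next, using unitarity of ${\mathcal G}$ together with the classical fact that the operator norm of a decomposable operator equals the essential supremum of the norms of its fibers, I would obtain
$$
\bigl\|(\A + \eps^\alpha I)^{-1} - (\A^0 + \eps^\alpha I)^{-1}\bigr\|_{L_2(\R^d)\to L_2(\R^d)}
= \esssup_{\bxi\in\wt\Omega}\bigl\|(\A(\bxi)+\eps^\alpha I)^{-1} - (\A^0(\bxi)+\eps^\alpha I)^{-1}\bigr\|_{L_2(\Omega)\to L_2(\Omega)}.
$$
At this point Theorem~\ref{teor4.5} applies directly: it bounds the right-hand side by ${\mathrm C}_1(\alpha,\mu)\,\eps^{2-2\alpha}$ uniformly in $\bxi\in\wt\Omega$, and the assertion follows with the same constant.

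The substantive analysis has already been done in Theorem~\ref{teor4.5} (the threshold approximations of Section~3 and the fiber resolvent estimate), so no real obstacle remains in this reduction. The only points deserving a word of care are the measurability and decomposability of the resolvent families $\bxi\mapsto(\A(\bxi)+\eps^\alpha I)^{-1}$ and $\bxi\mapsto(\A^0(\bxi)+\eps^\alpha I)^{-1}$ --- which follow from the norm continuity of the resolvent in $\bxi$ on the compact cell $\wt\Omega$ together with the uniform bound $\eps^{-\alpha}$ --- and the identity relating the norm of a decomposable operator to the essential supremum of the fiber norms, which is standard for direct integrals.
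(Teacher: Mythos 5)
Your proposal is correct and follows essentially the same route as the paper: Theorem \ref{teor4.7} is obtained precisely by decomposing both resolvents into direct integrals via the Gelfand transform (using \eqref{e1.15} and \eqref{direct_int}), identifying the operator norm of the decomposable difference with the supremum over $\bxi\in\wt{\Omega}$ of the fiber norms, and invoking the uniform fiber estimate of Theorem \ref{teor4.5}. This is exactly the reduction the paper indicates (and carries out verbatim in the proof of Theorem \ref{teor4.8}), so nothing further is needed.
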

In the present work we provide a more accurate approximation of the resolvent $(\A+\varepsilon^{\alpha}I)^{-1}$
which relies on Theorem \ref{teor4.6}.
\begin{theorem}
\label{teor4.8}
Let conditions \eqref{e1.1}, \eqref{e1.2} be fulfilled, and assume that
 $N \in \N$ and  $2 - \frac{1}{N} < \alpha < 2$.
Then for any $\eps>0$ the following estimate holds:
\begin{equation}
\label{th4.8_1}
\begin{aligned}
\Bigl\|(\A+\varepsilon^{\alpha}I)^{-1}- (\A^0+\varepsilon^{\alpha}I)^{-1} - \sum_{m=1}^N
{\mathbb K}_m(\eps) \Bigr\|_{L_2(\R^d) \to L_2(\R^d)}
\\
\le {\mathrm C}_2(\alpha,\mu)
 \begin{cases}
  \eps^{1 - \alpha}, & 2 - \frac{1}{N} < \alpha \le 2 - \frac{1}{N+1},
  \\
 \eps^{(N+1)(2-\alpha) - \alpha}, & 2 - \frac{1}{N+1} < \alpha < 2.
  \end{cases}
\end{aligned}
\end{equation}
Here the correctors ${\mathbb K}_m(\eps)$, $m=1,\dots,N,$ are given by  \eqref{e4.21}.
The constant ${\mathrm C}_2(\alpha,\mu)$ is expressed in terms of the parameters $d$, $\alpha$, $\mu_-$ and $\mu_+$.
\end{theorem}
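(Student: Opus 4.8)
The plan is to obtain the statement as an immediate corollary of the fiberwise estimate of Theorem~\ref{teor4.6} together with the direct integral decompositions already established. First I would note that, since $\A$ is a $\Z^d$-periodic operator, the Gelfand transform diagonalizes not only $\A$ itself, \eqref{e1.15}, but also any Borel function of it; in particular the resolvent decomposes as
\begin{equation*}
(\A+\varepsilon^{\alpha}I)^{-1} = {\mathcal G}^*\Bigl( \int_{\wt{\Omega}} \oplus (\A(\bxi)+\varepsilon^{\alpha}I)^{-1}\,d\bxi  \Bigr) {\mathcal G}, \quad \eps >0.
\end{equation*}
Likewise, by \eqref{direct_int}, $(\A^0+\varepsilon^{\alpha}I)^{-1}$ is the direct integral of the fibers $(\A^0(\bxi)+\varepsilon^{\alpha}I)^{-1}$ defined via \eqref{eff_op_xi}, and the correctors $\mathbb{K}_m(\eps)$ are already presented in decomposed form in \eqref{direct_int_Km}, \eqref{Km_xi}. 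Consequently the operator under the norm sign on the left-hand side of \eqref{th4.8_1} is unitarily equivalent, via ${\mathcal G}$, to the direct integral over $\bxi\in\wt{\Omega}$ of the fiber operators $(\A(\bxi)+\varepsilon^{\alpha}I)^{-1}- (\A^0(\bxi)+\varepsilon^{\alpha}I)^{-1} - \sum_{m=1}^N {\mathbb K}_m(\bxi,\eps)$ appearing in \eqref{e4.23}.

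Next I would invoke the standard fact that the norm of a measurable, essentially bounded direct integral of operators equals the essential supremum over $\bxi$ of the norms of its fibers, so that
\begin{equation*}
\Bigl\|(\A+\varepsilon^{\alpha}I)^{-1}- (\A^0+\varepsilon^{\alpha}I)^{-1} - \sum_{m=1}^N {\mathbb K}_m(\eps) \Bigr\| = \esssup_{\bxi\in\wt{\Omega}} \Bigl\|(\A(\bxi)+\varepsilon^{\alpha}I)^{-1}- (\A^0(\bxi)+\varepsilon^{\alpha}I)^{-1} - \sum_{m=1}^N {\mathbb K}_m(\bxi,\eps) \Bigr\|,
\end{equation*}
all norms being $L_2(\R^d)\to L_2(\R^d)$ on the left and $L_2(\Omega)\to L_2(\Omega)$ on the right. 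Measurability and essential boundedness of the fiber family are inherited from those of $\A(\bxi)$, $\A^0(\bxi)$, $\mathbb{K}_m(\bxi,\eps)$ and need only be recorded. The decisive point is that the bound in Theorem~\ref{teor4.6} is uniform over the whole of $\wt{\Omega}$ (for $|\bxi|\le\delta_*$ one uses \eqref{e2.37} through \eqref{e4.26}, and for $|\bxi|>\delta_*$ each term is estimated separately using the spectral lower bound $|2\pi\n+\bxi|\ge\pi$ for $\n\ne\mathbf 0$); substituting it into the right-hand side above yields \eqref{th4.8_1} with ${\mathrm C}_2(\alpha,\mu):={\mathrm C}_2(N,\alpha,\mu)$, where $N$ is the integer fixed in the hypothesis, and with exactly the two regimes $2-\tfrac1N<\alpha\le 2-\tfrac1{N+1}$ and $2-\tfrac1{N+1}<\alpha<2$.

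I do not expect any genuine obstacle here: the analytic substance — the threshold approximation of $\A(\bxi)F(\bxi)$ in Proposition~\ref{prop3.12}, the contour-integral expansion of $R(\bxi,\zeta)$ in Lemma~\ref{lem3.2}, the geometric-series expansion \eqref{4.11}, and the uniform control of all remainders — has already been carried out and is packaged in Theorem~\ref{teor4.6}. The only care needed is bookkeeping: verifying that $\mathbb{K}_m(\bxi,\eps)$ in \eqref{Km_xi} is indeed the Gelfand fiber of $\mathbb{K}_m(\eps)$ in \eqref{e4.21} (which holds because $(\D+\bxi)^*g^0(\D+\bxi)$ is the fiber of $-\operatorname{div} g^0\nabla$ and $\A^0(\bxi)$ is the fiber of $\A^0$, so products and negative powers of these operators assemble fiberwise), and checking that the passage between $\mathbb{R}^d$-norms and $\esssup$ of $\Omega$-norms is justified by the decomposability just recorded. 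This completes the plan.
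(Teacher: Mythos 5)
Your proposal is correct and coincides with the paper's own argument: both reduce \eqref{th4.8_1} to the fiberwise bound of Theorem~\ref{teor4.6} by decomposing $(\A+\varepsilon^{\alpha}I)^{-1}-(\A^0+\varepsilon^{\alpha}I)^{-1}-\sum_{m=1}^N{\mathbb K}_m(\eps)$ into a direct integral via the Gelfand transform (using \eqref{e1.15}, \eqref{direct_int}, \eqref{direct_int_Km}) and identifying the operator norm with the supremum over $\bxi\in\wt{\Omega}$ of the fiber norms. No gaps.
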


\begin{proof}[Proof]
According to \eqref{e1.15},  \eqref{direct_int} and \eqref{direct_int_Km}, the operator
$(\A+\varepsilon^{\alpha}I)^{-1}- (\A^0+\varepsilon^{\alpha}I)^{-1} - \sum_{m=1}^N
{\mathbb K}_m(\eps) $ can be decomposed by the Gelfand transform into a direct integral over the operators
$(\A(\bxi) +\varepsilon^{\alpha}I)^{-1}- (\A^0(\bxi)+\varepsilon^{\alpha}I)^{-1} - \sum_{m=1}^N {\mathbb K}_m(\bxi,\eps)$.  Therefore,
\begin{multline*}
\Bigl\|(\A+\varepsilon^{\alpha}I)^{-1}- (\A^0+\varepsilon^{\alpha}I)^{-1} - \sum_{m=1}^N {\mathbb K}_m(\eps)
\Bigr\|_{L_2(\R^d) \to L_2(\R^d)}
\\
=
\sup_{\bxi \in \wt{\Omega}} \Bigl\|(\A(\bxi)+\varepsilon^{\alpha}I)^{-1}- (\A^0(\bxi)+\varepsilon^{\alpha}I)^{-1}
- \sum_{m=1}^N {\mathbb K}_m(\bxi,\eps) \Bigr\|_{L_2(\Omega) \to L_2(\Omega)},
\end{multline*}
and estimate \eqref{th4.8_1} follows from \eqref{e4.23}.
\end{proof}

\section{Homogenization of L\'evy-type operators}

\subsection{Main result}
Assuming that conditions \eqref{e1.1}, \eqref{e1.2} are satisfied and $1< \alpha <2$, we consider
the family of operators $\A_\eps := \A(\alpha,\mu^\eps)$, $\eps >0$, in $L_{2}(\R^d)$ which are defined in
\eqref{Aeps_Intro}, \eqref{q_form_Intro}.  Thus $\A_\eps$ is a self-adjoint operator in  $L_{2}(\R^d)$ generated by the closed quadratic form
\begin{equation*}
a_{\varepsilon}[u,u] = \frac{1}{2} \int_{\R^d}  \int_{\R^d} d\x \, d\y \, \mu^\eps(\x,\y)
\frac{|u(\x)-u(\y)|^2}{|\x - \y|^{d+\alpha}},\ \ u\in H^\gamma(\R^d),\ \ \varepsilon>0,
\end{equation*}
with $\mu^\eps(\x,\y):= \mu(\x/\varepsilon,\y/\varepsilon)$.
Recall that the effective operator  $\A^0$ is given by \eqref{eff_op}, and the effective coefficient
$\mu^0$  -- by  \eqref{2.33}.

The scaling transformation $T_{\varepsilon}$ is introduced as the family of unitary operators
defined by
\begin{equation*}
T_{\varepsilon}u(\x):=\varepsilon^{d/2}u(\varepsilon \x),\ \
\x\in\R^d,\ \ u\in L_{2}(\R^d), \ \ \eps >0.
\end{equation*}
It is an easy exercise to check that
$$
\A_{\varepsilon} = \varepsilon^{-\alpha} T_{\varepsilon}^* \A T_{\varepsilon},\quad \eps >0.
$$
Therefore,
\begin{equation}\label{e3.2}
(\A_{\varepsilon}+I)^{-1}= T_{\varepsilon}^{*}\varepsilon^{\alpha}(\A+\varepsilon^{\alpha}I)^{-1}T_{\varepsilon},
\ \ \varepsilon>0.
\end{equation}
The effective operator  satisfies a similar identity
$$
\A^0 = \varepsilon^{-\alpha} T_{\varepsilon}^* \A^0 T_{\varepsilon},\quad \eps >0,
$$
and hence
\begin{equation}\label{e3.2_eff}
(\A^0+I)^{-1}= T_{\varepsilon}^{*}\varepsilon^{\alpha}(\A^0+\varepsilon^{\alpha}I)^{-1}T_{\varepsilon},
\ \ \varepsilon>0.
\end{equation}
Combining \eqref{e3.2}, \eqref{e3.2_eff} and the unitarity of the operator $T_\eps$ we conclude that
$$
\|(\A_{\varepsilon}+I)^{-1}-(\A^{0}+I)^{-1}\|_{L_2(\R^d) \to L_2(\R^d)} =
\eps^\alpha \|(\A + \varepsilon^\alpha I)^{-1}-(\A^{0}+\eps^\alpha I)^{-1}\|_{L_2(\R^d) \to L_2(\R^d)}.
$$
This relation and Theorem \ref{teor4.5}  yield the following result, see   \cite[Theorem 6.1]{JPSS24}:
\begin{theorem}[\cite{JPSS24}]
\label{teor3.1}
Assume that conditions  \eqref{e1.1} and  \eqref{e1.2} hold, and $1< \alpha <2$.
Then, for any $\varepsilon>0$ the following estimate is valid{\rm :}
\begin{equation}\label{e3.1}
\|(\A_{\varepsilon}+I)^{-1}-(\A^{0}+I)^{-1}\|_{L_2(\R^d) \to L_2(\R^d)}\le
{\mathrm C}_1(\alpha,\mu)
 \eps^{2 - \alpha};
 \end{equation}
 here the constant ${\mathrm C}_1(\alpha,\mu)$ is expressed in terms of  $d$, $\alpha$, $\mu_-$, $\mu_+$.
\end{theorem}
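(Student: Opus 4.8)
The plan is to reduce the assertion, via the scaling covariance of L\'evy-type operators, to the unit-scale operator estimate already available as Theorem \ref{teor4.7} (equivalently, Theorem \ref{teor4.5} together with the direct-integral decomposition \eqref{e1.15}).

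First I would record how $\A_\eps$ transforms under the scaling $T_\eps$. Changing variables $\x\mapsto\eps\x$, $\y\mapsto\eps\y$ in the quadratic form \eqref{q_form_Intro} and using that the kernel $|\x-\y|^{-d-\alpha}$ is homogeneous of degree $-d-\alpha$, one obtains $a[T_\eps u,T_\eps u]=\eps^\alpha a_\eps[u,u]$ for all $u\in H^\gamma(\R^d)$; by the uniqueness of the self-adjoint operator generated by a closed non-negative form this yields $\A_\eps=\eps^{-\alpha}T_\eps^*\A T_\eps$. The effective operator $\A^0=\mu^0 c_0(d,\alpha)(-\Delta)^\gamma$ is homogeneous of order $\alpha$, so the same computation gives $\A^0=\eps^{-\alpha}T_\eps^*\A^0 T_\eps$. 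From these two identities I would derive the resolvent relations \eqref{e3.2} and \eqref{e3.2_eff} by writing $\A_\eps+I=\eps^{-\alpha}T_\eps^*(\A+\eps^\alpha I)T_\eps$ (and similarly for $\A^0$) and inverting.

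Subtracting \eqref{e3.2_eff} from \eqref{e3.2} and using that $T_\eps$ is unitary on $L_2(\R^d)$, so that conjugation preserves operator norms, I would obtain the exact identity
\[
\|(\A_{\varepsilon}+I)^{-1}-(\A^{0}+I)^{-1}\|_{L_2(\R^d)\to L_2(\R^d)} = \eps^\alpha\,\|(\A+\varepsilon^\alpha I)^{-1}-(\A^{0}+\eps^\alpha I)^{-1}\|_{L_2(\R^d)\to L_2(\R^d)}.
\]
Inserting the bound of Theorem \ref{teor4.7}, namely $\|(\A+\varepsilon^{\alpha}I)^{-1}-(\A^0+\varepsilon^{\alpha}I)^{-1}\|\le{\mathrm C}_1(\alpha,\mu)\,\eps^{2-2\alpha}$, immediately produces the claimed estimate of order $\eps^{2-\alpha}$, with the same constant ${\mathrm C}_1(\alpha,\mu)$ depending only on $d,\alpha,\mu_-,\mu_+$.

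For this particular statement there is essentially no remaining obstacle: all the analytic substance is concentrated in Theorem \ref{teor4.7}, which in turn rests on the fiber decomposition \eqref{e1.15} and on the threshold approximation of the resolvents $(\A(\bxi)+\eps^\alpha I)^{-1}$ near $\bxi=\mathbf 0$ (Theorems \ref{teor2.1}, \ref{teor4.5}). The only point I would check with some care within the present proof is that scaling and passage to the resolvent genuinely commute at the level of self-adjoint operators and not merely of quadratic forms; this follows from the form identity $a[T_\eps u,T_\eps u]=\eps^\alpha a_\eps[u,u]$ combined with the standard correspondence between closed non-negative forms and the self-adjoint operators they generate.
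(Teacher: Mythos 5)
Your argument is correct and is essentially the paper's own proof: the paper derives the same scaling identities \eqref{e3.2}, \eqref{e3.2_eff}, uses unitarity of $T_\eps$ to get the exact norm identity, and then invokes the unit-scale bound of order $\eps^{2-2\alpha}$ (Theorem \ref{teor4.7}, itself Theorem \ref{teor4.5} plus the direct-integral decomposition \eqref{e1.15}) to obtain \eqref{e3.1}. No gaps; your remark on the form-to-operator correspondence under scaling is exactly the point the paper also relies on.
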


Making use of Theorem \ref{teor4.8} we obtain a more accurate approximation of the resolvent   $(\A_{\varepsilon}+I)^{-1}$.

\begin{theorem}
\label{teor3.2}
Let conditions \eqref{e1.1}, \eqref{e1.2} be fulfilled, and assume that  $N \in \N$ is such that
$2 - \frac{1}{N} < \alpha < 2$. Then, for any $\varepsilon>0$ the following estimate holds{\rm :}
\begin{equation}\label{e3.2b}
\begin{aligned}
\Bigl\| (\A_{\varepsilon}+I)^{-1}-(\A^{0}+I)^{-1} - \sum_{m=1}^N  \eps^{m(2-\alpha)} \mathbb{K}_m \Bigr\|_{L_2(\R^d) \to L_2(\R^d)}
\\
\le
{\mathrm C}_2(N,\alpha,\mu) \begin{cases}
  \eps, & 2 - \frac{1}{N} < \alpha \le 2 - \frac{1}{N+1},
  \\
 \eps^{(N+1)(2-\alpha)}, & 2 - \frac{1}{N+1} < \alpha < 2.
  \end{cases}
\end{aligned}
 \end{equation}
 Here $\A^0$ is the effective operator defined in \eqref{eff_op}, and the  correctors ${\mathbb K}_m,$
 $m=1,\dots,N,$ are given by
 $$
 {\mathbb K}_m :=  ( \operatorname{div} g^0 \nabla)^m (\A^{0}+I)^{-m-1},\quad m=1,\dots,N.
 $$
 The constant ${\mathrm C}_2(N,\alpha,\mu)$ is expressed in terms of  $d$, $\alpha$, $\mu_-$ and $\mu_+$.
\end{theorem}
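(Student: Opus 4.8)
The plan is to deduce Theorem \ref{teor3.2} from Theorem \ref{teor4.8} by means of the scaling transformation $T_\eps$, in exactly the same way as Theorem \ref{teor3.1} was obtained from Theorem \ref{teor4.7}. Two of the required ingredients are already at hand: identities \eqref{e3.2} and \eqref{e3.2_eff}, namely
\begin{equation*}
(\A_\eps+I)^{-1}= T_\eps^{*}\eps^{\alpha}(\A+\eps^{\alpha}I)^{-1}T_\eps,
\qquad
(\A^0+I)^{-1}= T_\eps^{*}\eps^{\alpha}(\A^0+\eps^{\alpha}I)^{-1}T_\eps .
\end{equation*}
What remains is to establish an analogous identity for the correctors.

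First I would compute how the operator $\mathbb{K}_m(\eps)$ of \eqref{e4.21} behaves under $T_\eps$. Since $\nabla_\x\bigl(u(\eps\x)\bigr)=\eps(\nabla u)(\eps\x)$, one gets $T_\eps^{*}(\operatorname{div}g^0\nabla)T_\eps=\eps^{2}\operatorname{div}g^0\nabla$, and therefore $T_\eps^{*}(\operatorname{div}g^0\nabla)^mT_\eps=\eps^{2m}(\operatorname{div}g^0\nabla)^m$. Using in addition the relation $T_\eps^{*}\A^0T_\eps=\eps^{\alpha}\A^0$ stated before \eqref{e3.2_eff} (so that $T_\eps^{*}(\A^0+\eps^{\alpha}I)^{-m-1}T_\eps=\eps^{-\alpha(m+1)}(\A^0+I)^{-m-1}$) together with the fact that $\operatorname{div}g^0\nabla$ and $\A^0$ commute — both are constant-coefficient Fourier multipliers, by $-\langle g^0\k,\k\rangle$ and $\mu^0 c_0|\k|^{\alpha}$ respectively — I obtain
\begin{equation*}
\eps^{\alpha}\,T_\eps^{*}\,\mathbb{K}_m(\eps)\,T_\eps=\eps^{m(2-\alpha)}\,\mathbb{K}_m,\qquad m=1,\dots,N .
\end{equation*}

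Next I would assemble the three identities. Write $\mathcal{R}_\eps:=(\A+\eps^{\alpha}I)^{-1}-(\A^0+\eps^{\alpha}I)^{-1}-\sum_{m=1}^N\mathbb{K}_m(\eps)$ for the operator estimated in Theorem \ref{teor4.8}. Combining the three scaling relations yields
\begin{equation*}
(\A_\eps+I)^{-1}-(\A^0+I)^{-1}-\sum_{m=1}^N\eps^{m(2-\alpha)}\mathbb{K}_m=\eps^{\alpha}\,T_\eps^{*}\,\mathcal{R}_\eps\,T_\eps .
\end{equation*}
Since $T_\eps$ is unitary, the $L_2(\R^d)\to L_2(\R^d)$ norm of the left-hand side equals $\eps^{\alpha}\|\mathcal{R}_\eps\|$. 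By Theorem \ref{teor4.8} this is at most ${\mathrm C}_2(\alpha,\mu)\,\eps^{\alpha}\cdot\eps^{1-\alpha}={\mathrm C}_2(\alpha,\mu)\,\eps$ in the range $2-\frac{1}{N}<\alpha\le 2-\frac{1}{N+1}$, and at most ${\mathrm C}_2(\alpha,\mu)\,\eps^{\alpha}\cdot\eps^{(N+1)(2-\alpha)-\alpha}={\mathrm C}_2(\alpha,\mu)\,\eps^{(N+1)(2-\alpha)}$ in the range $2-\frac{1}{N+1}<\alpha<2$, which is precisely \eqref{e3.2b}.

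The argument is routine once the scaling identity for $\mathbb{K}_m(\eps)$ is in place; the only point requiring a little care is the bookkeeping of the powers of $\eps$ — checking that the factor $\eps^{2}$ produced by each application of $\operatorname{div}g^0\nabla$, together with the factor $\eps^{-\alpha}$ per power of the resolvent, combine after the overall multiplication by $\eps^{\alpha}$ to give exactly the weights $\eps^{m(2-\alpha)}$ in front of the correctors, so that the residual estimate degrades precisely as stated in \eqref{e3.2b}.
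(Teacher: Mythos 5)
Your proposal is correct and follows essentially the same route as the paper: the paper's proof likewise reduces \eqref{e3.2b} to Theorem \ref{teor4.8} via the scaling identities \eqref{e3.2}, \eqref{e3.2_eff} and the relation $\eps^{m(2-\alpha)}\mathbb{K}_m=\eps^{\alpha}T_\eps^{*}\mathbb{K}_m(\eps)T_\eps$, using the unitarity of $T_\eps$. Your explicit derivation of that corrector scaling identity (which the paper only states) and your power-of-$\eps$ bookkeeping are both accurate.
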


\begin{proof}[Proof]
The correctors ${\mathbb K}_m$ and the operators  ${\mathbb K}_m(\eps)$ defined in  \eqref{e4.21} can be expressed in  terms of each other by means of the scaling transformation $T_{\varepsilon}$. The corresponding relation reads
\begin{equation}\label{Km_eff}
\mathbb{K}_m = T_{\varepsilon}^{*}\varepsilon^{ - m(2-\alpha) + \alpha} {\mathbb K}_m(\eps)T_{\varepsilon},
\quad m=1,\dots,N.
\ \ \varepsilon>0.
\end{equation}
Since the operator $T_\eps$ is unitary, from \eqref{e3.2}, \eqref{e3.2_eff} and \eqref{Km_eff} it follows that
$$
\begin{aligned}
& \Bigl\|(\A_{\varepsilon}+I)^{-1}-(\A^{0}+I)^{-1} - \sum_{m=1}^N  \eps^{m(2-\alpha)} \mathbb{K}_m
\Bigr\|_{L_2(\R^d) \to L_2(\R^d)}
\\
& =
\eps^\alpha \|(\A + \varepsilon^\alpha I)^{-1}-(\A^{0}+\eps^\alpha I)^{-1}
- \sum_{m=1}^N   \mathbb{K}_m(\eps) \|_{L_2(\R^d) \to L_2(\R^d)}.
\end{aligned}
$$
Combining this relation with Theorem \ref{teor4.8}  we obtain the required estimate \eqref{e3.2b}.
\end{proof}

\subsection{Concluding remarks}

1.  It follows from Theorem \ref{teor3.2} that the precision $O(\eps^{2-\alpha})$ in estimate \eqref{e3.1}
is order-sharp.

2.  For any $\alpha\in (1,2)$ one can choose $N \in \N$ in such a way that
$2 - \frac{1}{N} < \alpha \le 2 - \frac{1}{N+1}$.  Then, taking into account first $N$ correctors, we obtain an approximation of the resolvent  $(\A_{\varepsilon}+I)^{-1}$ of order $\eps$.

3. From the explicit formulae for the constants ${\mathrm C}_1(\alpha,\mu)$ and ${\mathrm C}_2(N,\alpha,\mu)$  it is easy to deduce that ${\mathrm C}_1(\alpha,\mu)$ depends only on  $d$, $\alpha$,
$\mu_-$ and $\mu_+$, while ${\mathrm C}_2(N,\alpha,\mu)$ depends also on $N$.
Moreover, both constants tend to infinity, as $\alpha\to 1$ or $\alpha\to 2$.

4. The statements of Theorems \ref{teor3.1} and  \ref{teor3.2} remain valid if the periodicity lattice $\Z^d$ is replaced
with an arbitrary periodic lattice in $\R^d$.  In this case the constants in estimates  \eqref{e3.1} and  \eqref{e3.2b} will
also depend on the parameters of the lattice.

\end{document}